\definecolor{sepia}{cmyk}{0, 0.83, 1, 0.70}
\newtheorem{theorem}{Theorem}
\newtheorem{lemma}[theorem]{Lemma}
\newtheorem{proposition}[theorem]{Proposition}
\theoremstyle{definition}
\newtheorem{example}[theorem]{Example}
\newtheorem{remark}[theorem]{Remark}
\numberwithin{equation}{section} 
\numberwithin{theorem}{section}  
\renewenvironment{proof}[1][\proofname]
{\par
	\pushQED{$\blacksquare$} 
	\normalfont\topsep6\p@\@plus6\p@\relax
	\trivlist
	\item[\hskip\labelsep\bfseries#1\@addpunct{.}]
	\ignorespaces}
{\popQED \endtrivlist\@endpefalse}
\DeclareMathOperator*{\fix}{Fix}
\definecolor{myblue}{rgb}{.8, .8, 1}
\begin{document}

\title{\vspace{-5em} \textbf{Error Bounds for the Method of Simultaneous Projections with Infinitely Many Subspaces}}

\author{Simeon Reich\thanks{Department of Mathematics,
    The Technion -- Israel Institute of Technology, 3200003 Haifa, Israel;
    E-mail: \texttt{sreich@technion.ac.il}. } \and
    Rafa\l\ Zalas\thanks{Department of Mathematics,
    The Technion -- Israel Institute of Technology, 3200003 Haifa, Israel;
    E-mail: \texttt{zalasrafal@gmail.com}. }
}

\maketitle

\begin{abstract}
  We investigate the properties of the simultaneous projection method as applied to countably infinitely many closed and linear subspaces of a real Hilbert space. We establish the optimal error bound for linear convergence of this method, which we express in terms of the  cosine of the Friedrichs angle  computed in an infinite product space. In addition, we provide  estimates and alternative expressions for the above-mentioned number.  Furthermore, we relate this number to the dichotomy theorem and to super-polynomially fast convergence. We also discuss polynomial convergence of the simultaneous projection method which takes place for particularly chosen starting points.

  \bigskip
  \noindent \textbf{Key words and phrases:} Friedrichs angle; Product space; Rates of convergence; Simultaneous projection method

  \bigskip
  \noindent \textbf{2010 Mathematics Subject Classification:} 41A25, 41A28, 41A44, 41A65.
\end{abstract}

\section{Introduction}

Let $\mathcal H$ be a real Hilbert space with its inner product denoted by $\langle \cdot, \cdot \rangle$ and the induced norm denoted by $\|\cdot\|$. In this paper we study the asymptotic properties of the simultaneous projection method as applied to a possibly  countably infinite number of closed and linear subspaces of $\mathcal H$. We begin by briefly recalling some of the known results which have so far been established only for a finite number of subspaces. Moreover, we recall relevant results related to the cyclic projection method. We do not discuss here the case of general closed and convex sets, for which we refer the  interested reader to \cite{BauschkeBorwein1996, BauschkeNollPhan2015, BorweinLiTam2017, Cegielski2012, CegielskiReichZalas2018}.  For other examples of projection methods studied in the setting of closed and linear subspaces, we refer the reader to \cite{ArtachoCampoy2019, BadeaSeifert2017, BauschkeCruzNghiaPhanWang2014, BauschkeCruzNghiaPhanWang2016, Tam2020}.

\subsection{Related Work}

For now let $r \in \mathbb Z_+$.  For each $i =1,2, \ldots, r$, let $M_i\subset \mathcal H$ be a nontrivial closed and linear subspace, and let $P_{M_i}$ denote the corresponding orthogonal projection. Moreover, let $M:=\bigcap_{i=1}^r M_i$ with the corresponding orthogonal projection $P_M$. In the next three theorems, the operator $T_r$ can be either the cyclic projection operator $T_r:=P_{M_r}\ldots P_{M_1}$ or the simultaneous projection operator $T_r:=\frac 1 r \sum_{i=1}^r P_{M_i}$. In particular, $T_2=P_{M_2}P_{M_1}$ is the alternating projection operator. We begin with a well-known result.

\begin{theorem}[Norm Convergence]\label{int:th:norm}
For each $x\in\mathcal H$, we have
\begin{equation}
\lim_{k\rightarrow\infty}\left\|T_r^k (x)-P_M(x)\right\| = 0.
\end{equation}
\end{theorem}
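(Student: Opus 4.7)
My plan is to treat the simultaneous and cyclic choices of $T_r$ separately, since they admit rather different arguments.

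\textbf{Simultaneous case.} For $T_r = \tfrac{1}{r}\sum_{i=1}^r P_{M_i}$, I would use self-adjointness together with the spectral theorem. The operator is a convex combination of orthogonal projections, hence self-adjoint, positive, and contractive, with $\sigma(T_r)\subseteq[0,1]$. First I would identify $\fix(T_r) = M$: the inclusion $M \subseteq \fix(T_r)$ is immediate, and conversely $T_r x = x$ paired with $x$ yields $\tfrac{1}{r}\sum_{i=1}^r \|P_{M_i}x\|^2 = \|x\|^2$, which together with $\|P_{M_i}x\| \le \|x\|$ forces $P_{M_i}x = x$ for every $i$. The spectral theorem then gives $T_r^k = \int_{[0,1]}\lambda^k\,dE(\lambda) \to E(\{1\}) = P_{\fix(T_r)} = P_M$ strongly, by bounded convergence on $[0,1]$.

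\textbf{Cyclic case.} For $T_r = P_{M_r}\cdots P_{M_1}$, I would follow Halperin's classical approach. Setting $S_0 := \id$ and $S_j := P_{M_j}\cdots P_{M_1}$, iterating the Pythagorean identity $\|P_{M_i}y\|^2 + \|(\id-P_{M_i})y\|^2 = \|y\|^2$ produces the telescoping identity
\[
\|x\|^2 - \|T_r x\|^2 = \sum_{i=1}^r \|(\id - P_{M_i})S_{i-1} x\|^2.
\]
Substituting $T_r^k x$ for $x$ and summing over $k \ge 0$ shows $\sum_{k,i}\|(\id-P_{M_i})S_{i-1}T_r^k x\|^2 \le \|x\|^2 < \infty$, so each gap $(\id - P_{M_i})S_{i-1}T_r^k x$ vanishes strongly as $k\to\infty$. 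Writing $x = P_M x + y$ with $y := x - P_M x\in M^\perp$, I would use $T_r P_M = P_M$ and the $P_{M_i}$-invariance of $M^\perp$ to reduce to showing $T_r^k y\to 0$ for $y\in M^\perp$. Any weak cluster point $z$ of $\{T_r^k y\}$ lies in $\bigcap_{i=1}^r M_i = M$ (by successively applying the weak continuity of each $P_{M_j}$ to the vanishing gaps) and in the weakly closed subspace $M^\perp$, hence equals $0$; therefore $T_r^k y \rightharpoonup 0$.

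\textbf{Main obstacle.} The delicate step is upgrading this weak convergence to norm convergence in the cyclic case, since $T_r$ is not self-adjoint: the sequence $\|T_r^k y\|^2$ is monotonically decreasing to some $c^2 \ge 0$, and ruling out $c > 0$ requires more than the telescoping identity alone. My plan is to follow Halperin's original strategy by normalizing $z_k := T_r^k y/\|T_r^k y\|$ (assuming $c > 0$): these are unit vectors in $M^\perp$ with $\|T_r z_k\|\to 1$, so the telescoping identity forces $(\id - P_{M_i})S_{i-1}z_k\to 0$ strongly. A careful analysis of the joint dynamics of $\{z_k\}$ under $T_r$ and $T_r^*$, together with the identification of weak cluster points of $\{z_k\}$ in $M\cap M^\perp = \{0\}$, produces a contradiction with $\|z_k\| = 1$. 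Once $c = 0$ is established, $\|T_r^k x - P_M x\| = \|T_r^k y\|\to 0$, completing the proof.
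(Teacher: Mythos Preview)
The paper does not actually prove Theorem~\ref{int:th:norm}; it is quoted as a classical background result and attributed to von Neumann, Halperin, Lapidus and Reich, so there is no in-paper argument to compare your proposal against.

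Your spectral argument for the simultaneous case is correct and efficient: $T_r$ is self-adjoint with $\sigma(T_r)\subset[0,1]$, your identification $\fix T_r=M$ is valid, and dominated convergence for the spectral integral yields $T_r^k\to E(\{1\})=P_M$ strongly.

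For the cyclic case there is a genuine gap. Your final step claims that the weak cluster points of the normalized sequence $z_k:=T_r^k y/\|T_r^k y\|$ lie in $M\cap M^\perp=\{0\}$, and that this ``produces a contradiction with $\|z_k\|=1$.'' In an infinite-dimensional Hilbert space this is no contradiction at all: any orthonormal sequence consists of unit vectors converging weakly to zero. The vague appeal to ``joint dynamics under $T_r$ and $T_r^*$'' does not close the hole, and this is not how Halperin's argument proceeds. A clean route that avoids any contradiction step is to use the decomposition
\[
\mathcal H=\fix T_r\oplus\overline{\operatorname{ran}(I-T_r)},
\]
which holds because for any Hilbert-space contraction one has $\fix T_r=\fix T_r^*$ (equality in Cauchy--Schwarz), whence $(\fix T_r)^\perp=(\ker(I-T_r^*))^\perp=\overline{\operatorname{ran}(I-T_r)}$. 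On $M=\fix T_r$ the iterates are stationary. For $x=(I-T_r)u\in\operatorname{ran}(I-T_r)$, your own telescoping identity gives
\[
\|T_r^{k}u-T_r^{k+1}u\|
\le\sum_{i=1}^r\|(I-P_{M_i})S_{i-1}T_r^{k}u\|
\le\sqrt{r}\,\big(\|T_r^{k}u\|^2-\|T_r^{k+1}u\|^2\big)^{1/2}\to 0,
\]
so $T_r^k x\to 0$; a standard density argument using $\|T_r^k\|\le 1$ then extends this to all of $\overline{\operatorname{ran}(I-T_r)}=M^\perp$.
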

Theorem \ref{int:th:norm} goes back to von Neumann \cite{Neumann1949}, when $T_2=P_{M_2}P_{M_1}$, and to Halperin \cite{Halperin1962}, when $T_r=P_{M_r}\ldots P_{M_1}$. Lapidus \cite{Lapidus1981} and Reich \cite{Reich1983} proved Theorem \ref{int:th:norm} for $T_r=\frac 1 r \sum_{i=1}^r P_{M_i}$.

It turns out that in the infinite dimensional case, the convergence properties can indeed differ from their finite dimensional counterparts.

\begin{theorem}[Dichotomy]\label{int:th:dichotomy}
Exactly one of the following two statements holds:
\begin{enumerate}[(i)]
  \item $\sum_{i=1}^r M_i^\perp$ is closed. Then  the sequence  $\{T_r^k\}_{k=1}^\infty$  converges linearly to $P_M$.
  \item $\sum_{i=1}^r M_i^\perp$ is not closed. Then  the sequence  $\{T_r^k\}_{k=1}^\infty$   converges arbitrarily slowly to $P_M$.
\end{enumerate}
\end{theorem}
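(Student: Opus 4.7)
The plan is to reduce both forms of $T_r$ to an alternating projection problem between two subspaces in an appropriate product Hilbert space, invoke the classical Aronszajn--Kayalar--Weinert formula for the two-subspace case, and then transfer the characterization of linear convergence back to $\mathcal H$.

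First, I set up the product framework. Let $\mathbf H:=\mathcal H^r$ be equipped with the scaled inner product $\langle \mathbf x,\mathbf y\rangle:=\tfrac{1}{r}\sum_{i=1}^r\langle x_i,y_i\rangle$; let $D:=\{(x,\ldots,x):x\in\mathcal H\}$ be the diagonal and $\mathbf M:=M_1\times\cdots\times M_r$. The diagonal embedding $j:x\mapsto(x,\ldots,x)$ intertwines the simultaneous operator $T_r=\tfrac{1}{r}\sum_i P_{M_i}$ with the two-set alternating projection $P_D P_{\mathbf M}$ in $\mathbf H$, and satisfies $j(M)=D\cap\mathbf M$. For the cyclic operator, Pierra's product-space construction (using appropriately rotated factors) realizes $P_{M_r}\cdots P_{M_1}$ as an alternating projection between two subspaces in a related product space. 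In either case the analysis is reduced to the two-subspace setting.

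For part (i), assume $\sum_{i=1}^r M_i^\perp$ is closed. Using the surjection $\sigma:\mathbf M^\perp\to\sum_{i=1}^r M_i^\perp$ given by $(u_1,\ldots,u_r)\mapsto\sum_i u_i$, together with the identity $\ker\sigma^\perp\oplus D^\perp\cap\mathbf M^\perp$ structure, I would show that closedness of $\sum_{i=1}^r M_i^\perp$ is equivalent to closedness of $D+\mathbf M^\perp$ in $\mathbf H$, which in turn is equivalent to $c:=c_F(\mathbf M,D)<1$. The Kayalar--Weinert theorem then yields
\[
\bigl\|(P_D P_{\mathbf M})^k-P_{D\cap\mathbf M}\bigr\|\le c^{2k-1},
\]
which pulls back through $j$ to a linear rate for $\|T_r^k-P_M\|$.

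For part (ii), assume $\sum_{i=1}^r M_i^\perp$ is not closed. By the equivalence above, $c_F(\mathbf M,D)=1$, so no uniform linear rate can hold. To upgrade this to arbitrary slowness, I invoke the spectral theorem for the self-adjoint nonexpansive operator $T_r$ in the simultaneous case (or for $T_r^{*}T_r$ in the cyclic case): the spectrum accumulates at $1$ without a spectral gap, and a now-standard construction in the spirit of Deutsch--Hundal and Bauschke--Deutsch--Hundal produces, for any prescribed rate $a_k\downarrow 0$, a starting point $x\in\mathcal H$ with $\|T_r^k x-P_M x\|\ge a_k$ for all $k$.

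The main obstacle I anticipate is the cyclic case: since $T_r=P_{M_r}\cdots P_{M_1}$ is not self-adjoint, the arbitrarily-slow-convergence step requires either transferring the conclusion from $T_r^{*}T_r$ to $T_r$ via a polar decomposition argument, or performing a direct spectral analysis of products of orthogonal projections. A second, more technical step is the precise verification that closedness of $\sum_{i=1}^r M_i^\perp$ in $\mathcal H$ corresponds to closedness of $D+\mathbf M^\perp$ in $\mathbf H$; this is a familiar lemma, but I would prove it carefully before invoking the Kayalar--Weinert formula.
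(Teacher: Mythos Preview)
The paper does not give its own proof of this theorem; it is stated in the introduction as background and attributed to Bauschke--Deutsch--Hundal \cite{BauschkeDeutschHundal2009} and Deutsch--Hundal \cite{DeutschHundal2010}. The paper's contribution is the generalization Theorem~\ref{th:dichotomy}, whose proof \emph{invokes} Theorem~\ref{int:th:dichotomy} (applied to the two subspaces $\mathbf C_\omega,\mathbf D_\omega$ in the product space) as a black box. So for the simultaneous operator your outline---Pierra product space, equivalence of closedness of $\sum_i M_i^\perp$ with closedness of $\mathbf C^\perp+\mathbf D^\perp$ (the paper's Proposition~\ref{prop:Minkowski}), then Kayalar--Weinert for (i) and the two-subspace arbitrarily-slow result for (ii)---is exactly the route the paper takes for its generalization, and it is correct.

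There is, however, a genuine gap in your treatment of the cyclic operator $T_r=P_{M_r}\cdots P_{M_1}$. Pierra's product-space construction does \emph{not} realize a product of $r$ projections as an alternating projection between two subspaces; it realizes a convex combination $\sum_i\omega_iP_{M_i}$ that way. There is no ``rotated factors'' variant that turns $P_{M_r}\cdots P_{M_1}$ into $P_{\mathbf D}P_{\mathbf C}$ on a product space, and your subsequent appeal to Kayalar--Weinert for the cyclic case therefore has no footing. The linear-convergence half for the cyclic operator is usually obtained via the inclination/Friedrichs-number machinery for $r$ subspaces (e.g.\ \cite{BadeaGrivauxMuller2011, KayalarWeinert1988}), not via a two-subspace reduction; and the arbitrarily-slow half for general $r$ is precisely the content of \cite{DeutschHundal2010}, whose proof does not pass through a product-space reformulation either. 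Your closing remark about the polar decomposition of $T_r^*T_r$ is in the right spirit for (ii), but it does not rescue the two-subspace reduction you proposed for (i). In short: your plan is sound for the simultaneous operator and matches the paper, but the cyclic case requires a genuinely different argument that your proposal does not supply.
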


We recall that the linear convergence in (i) means that there are constants $c>0$ and $q\in (0,1)$ such that the inequality $\|T_r^k(x)-P_M(x)\|\leq c q^k \|x\|$ holds for all $k=1,2,\ldots$  and all $x \in \mathcal H$.  The arbitrarily slow convergence in (ii) means that for any sequence of scalars $\{a_k\}_{k=1}^\infty$ with $0\leq a_k$ and $a_k \to 0$, there is a point $x \in \mathcal H$ such that the inequality
$\|T_r(x) - P_{M}(x)\| \geq a_k$ holds for all $k=1,2,\ldots$.

The first instance of  Theorem \ref{int:th:dichotomy} (ii)  is due to Bauschke, Deutsch and Hundal \cite{BauschkeDeutschHundal2009}, who proved it for the alternating projection method ($T_2=P_{M_2} P_{M_1}$) with decreasing null sequences $\{a_k\}_{k=1}^\infty$. These authors commented that their result is also valid for $T_r=\frac 1 r \sum_{i=1}^r P_{M_i}$ with $r\geq 2$ because of the connection between the method of simultaneous projections and the method of alternating projections in the product space. We return to this connection below. The statement of Theorem \ref{int:th:dichotomy} with $T_r=P_{M_r}\ldots P_{M_1}$, allowing $r\geq 2$ and any null nonnegative sequence, has been established by Deutsch and Hundal in \cite{DeutschHundal2010}. Similar results can be found, for example, in \cite{BadeaGrivauxMuller2011, BadeaSeifert2016, BadeaSeifert2017, DeutschHundal2011, DeutschHundal2015}.

 Despite the arbitrarily slow convergence presented in alternative (ii), there do exist sets of starting points in $\mathcal H$ for which there are relatively good error upper bounds. We comment on this matter in Theorems \ref{int:th:superPoly} and \ref{int:th:poly}.

\begin{theorem}[Super-polynomial Rate] \label{int:th:superPoly}
If $\sum_{i=1}^r M_i^\perp$ is not closed, then  the sequence $\{T_r^k\}_{k=1}^\infty$  converges super-polynomially fast to $P_M$ on some dense linear subspace $X\subset\mathcal H$.
\end{theorem}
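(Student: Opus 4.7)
The plan is to invoke the spectral theorem for the self-adjoint operator underlying the iteration and to show that truncating the spectral measure away from the fixed-point eigenvalue $1$ produces a dense subspace on which the convergence is in fact exponential, hence a fortiori super-polynomial.

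Consider first the simultaneous case $T_r=\frac{1}{r}\sum_{i=1}^{r}P_{M_i}$. The operator $T_r$ is a positive self-adjoint contraction and satisfies $\ker(I-T_r)=M=\bigcap_{i=1}^{r}M_i$ (indeed, $\langle T_r x,x\rangle=\|x\|^{2}$ forces $\|P_{M_i}x\|=\|x\|$, and hence $P_{M_i}x=x$, for each $i$). Let $\{E_\lambda\}_{\lambda\in[0,1]}$ denote its spectral resolution; then $E(\{1\})=P_M$ and $E_{[0,1)}(\mathcal H)=M^{\perp}$. Define
\begin{equation*}
X := M + \bigcup_{0<\delta<1} E_{[0,1-\delta]}(\mathcal H).
\end{equation*}
For every $y\in M^{\perp}$ one has $E_{[0,1-1/n]}y\to E_{[0,1)}y=y$ as $n\to\infty$, so $X$ is a dense linear subspace of $\mathcal H$.

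Next, fix $x\in X$ and decompose $x=w+y$ with $w\in M$ and $y\in E_{[0,1-\delta]}(\mathcal H)$ for some $\delta=\delta(x)>0$. Then $P_M x=w$, $T_r^{k}w=w$, and the functional calculus yields
\begin{equation*}
\|T_r^{k}x-P_M x\|^{2} = \|T_r^{k}y\|^{2} = \int_{[0,1-\delta]}\lambda^{2k}\,d\|E_\lambda y\|^{2} \leq (1-\delta)^{2k}\|y\|^{2}.
\end{equation*}
Consequently $k^{p}\|T_r^{k}x-P_M x\|\to 0$ for every $p>0$, which is the claimed super-polynomial rate. The hypothesis that $\sum_{i=1}^{r}M_i^{\perp}$ is not closed plays no role in the pointwise estimate itself; by Theorem \ref{int:th:dichotomy} it merely rules out uniform convergence, ensuring that no single $E_{[0,1-\delta]}(\mathcal H)$ is dense in $\mathcal H$ and that the union over $\delta$ is genuinely needed.

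The main obstacle is the cyclic case $T_r=P_{M_r}\cdots P_{M_1}$, which is not self-adjoint, so the spectral theorem is not directly applicable. A natural remedy is to work with the self-adjoint positive contraction $B:=T_r^{*}T_r=P_{M_1}\cdots P_{M_{r-1}}P_{M_r}P_{M_{r-1}}\cdots P_{M_1}$ and to perform an analogous spectral truncation. The subtlety is that $\|T_r^{k}x\|^{2}$ is not simply $\langle B^{k}x,x\rangle$, so one must combine the spectral decomposition of $B$ with the power-boundedness of $T_r$ (and the identity $T_r P_M=P_M T_r=P_M$, which makes $T_r-P_M$ into a contraction whose iterates one analyses instead) to transfer the exponential-decay estimate to the iterates of $T_r$ itself. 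This transfer is the step that requires genuine extra care beyond the simultaneous case.
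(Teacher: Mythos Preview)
Your argument for the simultaneous operator $T_r=\frac{1}{r}\sum_{i=1}^{r}P_{M_i}$ is correct and complete: the spectral truncation produces a dense linear subspace on which the convergence is in fact geometric. This is a genuinely different route from the paper's. The paper does not prove Theorem~\ref{int:th:superPoly} at all; it quotes Badea--Seifert for the cyclic operator and then, for the simultaneous operator, appeals to Pierra's product-space reduction to the alternating case $P_{\mathbf D_r}P_{\mathbf C_r}$ (see the proof of Theorem~\ref{th:superPolyRate}). Your direct spectral argument is more elementary in the self-adjoint situation and avoids the detour through the product space and the Ritt-operator machinery that underlies the cited result. What the paper's approach buys, by contrast, is uniformity: once the cyclic case is granted, the simultaneous case for \emph{any} weights (and, in the paper's setting, infinitely many subspaces) follows at once.

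For the cyclic operator $T_r=P_{M_r}\cdots P_{M_1}$, however, your proposal has a genuine gap. You suggest working with $B=T_r^{*}T_r$ and transferring a spectral-truncation estimate to the iterates of $T_r$, but you never carry this out, and the obvious attempt fails: for a non-normal contraction one has $\|T_r^{k}x\|^{2}=\langle (T_r^{k})^{*}T_r^{k}x,x\rangle$, not $\langle B^{k}x,x\rangle$, and there is no reason the spectral subspaces of $B$ should be invariant under $T_r$. Power-boundedness of $T_r-P_M$ alone does not bridge this; one needs a functional calculus adapted to the non-self-adjoint setting. The Badea--Seifert proof uses precisely this: they show that the cyclic product is a Ritt operator (that is, $\sup_{k}k\|T_r^{k}-T_r^{k+1}\|<\infty$) and exploit the resulting holomorphic functional calculus in a Stolz domain to build the dense subspace. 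So for the cyclic half of the statement your proof is, as written, incomplete, and the missing step is not routine.
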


The super-polynomially fast convergence means that $k^n \|T_r^k(x)-P_M(x)\|\to 0$ as $k\to\infty$ for each $x\in X$ and for all $n=1,2,\ldots$. Theorem \ref{int:th:superPoly} is due to Badea and Seifert \cite{BadeaSeifert2016}, who established it for $T_r:=P_{M_r}\ldots P_{M_1}$ in a complex Hilbert space. By using a complexification argument, we see that this result is also valid in a real Hilbert space. Similarly to the case of Theorem \ref{int:th:dichotomy}, the result holds for $T_r=\frac 1 r \sum_{i=1}^r P_{M_i}$ as can be seen by using the product space approach. The details can be found in \cite{ReichZalas2017}.

 The following theorem has recently been established by Borodin and Kopeck\'{a} in \cite{BorodinKopecka2020}.

\begin{theorem}[Polynomial Rate]\label{int:th:poly}
Assume that $M_1 \cap M_2 = \{0\}$.  Then for any  $x \in M_1^\perp + M_2^\perp$  there is $C(x)>0$ such that
\begin{equation}\label{int:th:poly:ineq}
  \|(P_{M_2}P_{M_1})^k(x)\| \leq \frac{C(x)}{\sqrt k}, \quad k=1,2,\ldots.
\end{equation}
Moreover,  when $\mathcal H$ is infinite dimensional,  the denominator $\sqrt k$ cannot be replaced by $k^{1/2+\varepsilon}$ for any $\varepsilon>0$  (that is, for each $\varepsilon >0$ there are two closed linear subspaces $M_1$, $M_2$ and $x \in M_1^\perp + M_2^\perp$ such that $\|(P_{M_2}P_{M_1})^k(x)\| \geq C(x) k^{-(1/2+\varepsilon)}$ for some $C(x) > 0$ and all $k = 1,2,\ldots$).
\end{theorem}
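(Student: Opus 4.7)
My plan is to recast the problem as a spectral estimate on the positive self-adjoint operator $A := P_{M_1}P_{M_2}P_{M_1}$. Setting $T := P_{M_2}P_{M_1}$, a direct expansion, using idempotency of the projections to collapse $P_{M_2}^2 = P_{M_2}$ at the junction and $P_{M_1}^2 = P_{M_1}$ repeatedly, gives the identity
\begin{equation*}
(T^*)^n T^n \,=\, (P_{M_1}P_{M_2})^n (P_{M_2}P_{M_1})^n \,=\, A^{2n-1}, \qquad n \geq 1,
\end{equation*}
since both sides reduce to the alternating product of $4n-1$ projections starting and ending with $P_{M_1}$. Because $0 \leq A \leq I$, this rewrites as $\|T^n x\| = \|A^{n-1/2}x\|$. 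Moreover, $Av = v$ forces equality in $\|v\| \geq \|P_{M_1}v\| \geq \|P_{M_2}P_{M_1}v\| \geq \|Av\|$, which places $v \in M_1 \cap M_2 = \{0\}$; so $\ker(I-A) = \{0\}$.

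With this identity in hand, the upper bound reduces to the range inclusion
\begin{equation*}
M_1^\perp + M_2^\perp \,\subset\, \operatorname{range}\bigl((I-A)^{1/2}\bigr).
\end{equation*}
Indeed, for $x = (I-A)^{1/2}y$ the spectral theorem for $A$ gives
\begin{equation*}
\|T^n x\| \,=\, \|A^{n-1/2}(I-A)^{1/2} y\| \,\leq\, \|y\| \sup_{\lambda \in [0,1]} \lambda^{n-1/2}(1-\lambda)^{1/2},
\end{equation*}
and elementary calculus locates the supremum at $\lambda = 1 - 1/(2n)$ with value $\sim e^{-1/2}/\sqrt{2n}$, delivering $\|T^n x\| \leq C(x)/\sqrt n$.

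The heart of the argument, and what I expect to be the main obstacle, is this range inclusion. The case $u \in M_1^\perp$ is immediate: $Au = 0$, so $u = (I-A)u$. For $v \in M_2^\perp$, I split $v = P_{M_1}v + (I - P_{M_1})v$, with the second summand in $M_1^\perp$ handled above. For the first, I introduce
\begin{equation*}
R \,:=\, P_{M_1}(I - P_{M_2}),
\end{equation*}
and verify two facts: $R R^* = P_{M_1} - A$ (which on $M_1$ coincides with $(I-A)|_{M_1}$, and vanishes on $M_1^\perp$), and $R v = P_{M_1}v$ whenever $v \in M_2^\perp$. The polar decomposition $R = (R R^*)^{1/2} V$ with $V$ a partial isometry then places $\operatorname{range}(R)$ inside $\operatorname{range}\bigl((I-A)^{1/2}|_{M_1}\bigr)$, and the block-diagonal form of $(I-A)^{1/2}$ relative to $\mathcal H = M_1 \oplus M_1^\perp$ lets me reassemble the two pieces. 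Recognising the factorisation $(I-A)|_{M_1} = R R^*|_{M_1}$ through an operator whose range captures precisely $P_{M_1}(M_2^\perp)$ is the nontrivial insight.

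For the sharpness statement, the strategy is to invoke Halmos's two-projection theorem to realise any prescribed spectral profile of $A$. Taking $\mathcal H = L^2([0,1]) \oplus L^2([0,1])$ and
\begin{equation*}
P_{M_1} = \begin{pmatrix} I & 0 \\ 0 & 0 \end{pmatrix}, \qquad
P_{M_2} = \begin{pmatrix} C^2 & CS \\ CS & S^2 \end{pmatrix},
\end{equation*}
with $C$, $S$ multiplications by $\sqrt\lambda$, $\sqrt{1-\lambda}$, one has $A$ acting on the first summand as multiplication by $\lambda$, $M_1 \cap M_2 = \{0\}$, and a short computation identifies $M_2^\perp = \{(u, -\sqrt{\lambda/(1-\lambda)}\,u) : |u|^2/(1-\lambda) \in L^1([0,1])\}$. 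For any prescribed $\varepsilon > 0$ the choice $u(\lambda) = (1-\lambda)^\varepsilon$ is admissible, and
\begin{equation*}
\|T^n x\|^2 \,=\, \int_0^1 \lambda^{2n-1}(1-\lambda)^{2\varepsilon}\,d\lambda \,=\, B(2n,\, 2\varepsilon + 1),
\end{equation*}
which Stirling's formula shows behaves like $c\, n^{-(1+2\varepsilon)}$, giving the sharpness bound $\|T^n x\| \geq C(x)\, n^{-(1/2+\varepsilon)}$.
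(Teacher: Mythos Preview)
The paper does not contain a proof of this theorem. Theorem~\ref{int:th:poly} is quoted in the introduction as a result of Borodin and Kopeck\'{a} \cite{BorodinKopecka2020}, and is then \emph{applied} (in the product space $\mathbf H_\omega$, with $\mathbf C_\omega$ and $\mathbf D_\omega$ in place of $M_1$ and $M_2$) in the proof of Theorem~\ref{th:polyRate}. So there is no in-paper argument to compare your proposal against.

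That said, your spectral route is correct and self-contained. The identity $(T^*)^nT^n=A^{2n-1}$ for $A=P_{M_1}P_{M_2}P_{M_1}$ is right, and the reduction to the range inclusion $M_1^\perp+M_2^\perp\subset\operatorname{range}\bigl((I-A)^{1/2}\bigr)$ is the decisive step. Your factorisation $RR^*=P_{M_1}-A$ with $R=P_{M_1}(I-P_{M_2})$ is exactly what is needed; since $RR^*=P_{M_1}-A\le I-A$, Douglas's range-inclusion theorem gives $\operatorname{range}(R)\subset\operatorname{range}\bigl((I-A)^{1/2}\bigr)$ in one stroke, which slightly streamlines the block-diagonal reassembly you describe. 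The Halmos two-projection construction for sharpness is also correct: with $u(\lambda)=(1-\lambda)^{\varepsilon}$ one indeed has $x\in M_2^\perp$ and $\|T^nx\|^2=B(2n,\,2\varepsilon+1)\asymp n^{-(1+2\varepsilon)}$, delivering the stated lower bound $\|T^nx\|\ge C(x)\,n^{-(1/2+\varepsilon)}$.
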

It is not difficult to see that estimate \eqref{int:th:poly:ineq} also holds when  $M_1^\perp \cap M_2^\perp\neq \{0\}$.  We comment on this in the proof of Theorem \ref{th:polyRate} below.

We now return to the case where $\sum_{i=1}^{r}M_i^\perp$ is closed. In this case one may be interested in finding the optimal error bound, that is, the smallest possible estimate for the relative error $e_k(x):=\|T_r^k(x)-P_M(x)\|/\|x\|$, which is independent of $x$. The answer to this question leads to the computation of the operator norm since $\sup_{x\neq 0} e_k(x) = \|  T_r^k  -P_M\|$.

The first result of this type for the alternating projections method (APM) is due to Aronszajn (inequality) \cite{Aronszajn1950}, and Kayalar and Weinert (equality) \cite{KayalarWeinert1988}, who expressed the optimal error bound in terms of the cosine of the Friedrichs angle between the subspaces $M_1$ and $M_2$, which we denote by $\cos(M_1,M_2)$. Recall that
\begin{equation}\label{int:def:cosM1M2}
  \cos(M_1,M_2)
  := \sup \left\{ \langle x,  y \rangle \colon
  \begin{array}{l}
    x \in M_1 \cap (M_1 \cap M_2)^\perp \cap B,\\
    y \in M_2 \cap (M_1 \cap M_2)^\perp \cap B
  \end{array}
  \right\} \in [0,1],
\end{equation}
where $B:=\{x \in \mathcal H \colon \|x\|\leq 1\}$. Their result reads as follows:

\begin{theorem}[Optimal Error Bound] \label{int:th:APM}
For each $k=1,2,\ldots,$ we have
\begin{equation}\label{int:th:APM:eq}
\|(P_{M_2}P_{M_1})^k-P_M\|=\cos(M_1,M_2)^{2k-1}.
\end{equation}
\end{theorem}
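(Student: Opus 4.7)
The plan is to reduce the computation to the invariant subspace $M^\perp$ and then to exploit the fact that $P_{M_1}P_{M_2}P_{M_1}$ is positive and self-adjoint in order to convert the $k$-th power of $P_{M_2}P_{M_1}$ into a $(2k-1)$-st power of its norm.

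First I would record two elementary facts. Since $M \subset M_1 \cap M_2$, one has $P_{M_i} P_M = P_M P_{M_i} = P_M$ for $i=1,2$, so $(P_{M_2}P_{M_1})^k - P_M$ vanishes on $M$. Moreover, $M^\perp$ is invariant under each $P_{M_i}$: for $x \in M^\perp$ and $m \in M$, one has $\langle P_{M_i} x, m \rangle = \langle x, P_{M_i} m \rangle = \langle x, m \rangle = 0$. Since $P_M = 0$ on $M^\perp$, we conclude that $\|(P_{M_2}P_{M_1})^k - P_M\| = \|(P_{M_2}P_{M_1})^k|_{M^\perp}\|$.

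The central algebraic identity, obtained by writing out the products and collapsing adjacent identical projections via $P_{M_i}^2 = P_{M_i}$ at each junction, reads
\begin{equation*}
  \bigl((P_{M_2}P_{M_1})^k\bigr)^* (P_{M_2}P_{M_1})^k
  = (P_{M_1}P_{M_2})^k (P_{M_2}P_{M_1})^k
  = (P_{M_1}P_{M_2}P_{M_1})^{2k-1}.
\end{equation*}
Now $P_{M_1}P_{M_2}P_{M_1} = (P_{M_2}P_{M_1})^*(P_{M_2}P_{M_1})$ is positive and self-adjoint, and so is its restriction to the invariant subspace $M^\perp$. By the spectral theorem, $\|(P_{M_1}P_{M_2}P_{M_1})^{2k-1}|_{M^\perp}\| = \|P_{M_1}P_{M_2}P_{M_1}|_{M^\perp}\|^{2k-1} = \|P_{M_2}P_{M_1}|_{M^\perp}\|^{2(2k-1)}$. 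Taking square roots gives $\|(P_{M_2}P_{M_1})^k|_{M^\perp}\| = \|P_{M_2}P_{M_1}|_{M^\perp}\|^{2k-1}$.

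Finally I would identify $\|P_{M_2}P_{M_1}|_{M^\perp}\|$ with $\cos(M_1, M_2)$. For the upper bound, given $x \in M^\perp$ with $\|x\| \leq 1$, the vectors $u := P_{M_1} x \in M_1 \cap M^\perp$ and $v := P_{M_2} u \in M_2 \cap M^\perp$ satisfy $\|v\|^2 = \langle u, v \rangle \leq \cos(M_1, M_2)\,\|u\|\,\|v\|$ by \eqref{int:def:cosM1M2}, so $\|P_{M_2}P_{M_1} x\| = \|v\| \leq \cos(M_1, M_2)$. For the lower bound, a supremizing sequence from \eqref{int:def:cosM1M2} furnishes unit vectors $u_n \in M_1 \cap M^\perp$ and $v_n \in M_2 \cap M^\perp$ with $\langle u_n, v_n \rangle \to \cos(M_1, M_2)$; since $P_{M_1} u_n = u_n$ and $\|P_{M_2} u_n\| \geq \langle P_{M_2} u_n, v_n \rangle = \langle u_n, v_n \rangle$, these vectors witness $\|P_{M_2}P_{M_1}|_{M^\perp}\| \geq \cos(M_1, M_2)$. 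The main obstacle I anticipate is the careful bookkeeping in the projection-collapse identity of the second step; once that is verified, the rest is a clean combination of functional calculus for positive self-adjoint operators and the definition of the Friedrichs angle.
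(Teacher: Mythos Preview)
Your proof is correct. Note, however, that the paper does not supply its own proof of this theorem: it is quoted in the introduction as a known background result, with the inequality attributed to Aronszajn and the equality to Kayalar and Weinert \cite{KayalarWeinert1988}, and is subsequently used as a black box (for instance in the proof of Theorem~\ref{thm:norm}). Your argument is essentially the classical one: reduce to $M^\perp$ via the commutation $P_{M_i}P_M=P_MP_{M_i}=P_M$, collapse $\bigl((P_{M_2}P_{M_1})^k\bigr)^*(P_{M_2}P_{M_1})^k$ to $(P_{M_1}P_{M_2}P_{M_1})^{2k-1}$, invoke the spectral theorem for the positive self-adjoint operator $P_{M_1}P_{M_2}P_{M_1}$, and finally identify $\|P_{M_2}P_{M_1}|_{M^\perp}\|$ with $\cos(M_1,M_2)$ directly from the definition. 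The bookkeeping you flag in the collapse identity is routine: both sides are the alternating word of length $4k-1$ in $P_{M_1},P_{M_2}$ starting and ending with $P_{M_1}$.
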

Only estimates are known for $r>2$; see, for example, \cite{KayalarWeinert1988, PustylnikReichZaslavski2012} for those which involve angles measured between $M_1\cap\ldots\cap M_i$ and $M_{i+1}$, $i=1,\ldots,r-1$, and \cite{BadeaGrivauxMuller2011, PustylnikReichZaslavski2013} for those which are expressed using the so-called inclination number.  At this point, recall that
\begin{equation}\label{int:equivalence}
  \cos(M_1,M_2)<1 \quad \Longleftrightarrow \quad M_1^\perp+M_2^\perp \text{ is closed};
\end{equation}
see, for example, \cite{Deutsch1985, BauschkeBorwein1993} and \cite[Theorem 9.35 and p. 235]{Deutsch2001} for detailed historical notes. Equivalence \eqref{int:equivalence} may also be deduced from Theorems \ref{int:th:dichotomy} and \ref{int:th:APM}.

A result analogous to Theorem \ref{int:th:APM} has been established in \cite{ReichZalas2017} for the simultaneous projection method. Indeed, let the product space $\mathbf H_r := \mathcal H^r = \mathcal H \times \ldots \times \mathcal H$ be equipped with the inner product $\langle \mathbf x, \mathbf y \rangle_r :=\frac 1 r \sum_{i=1}^{r} \langle x_i,y_i\rangle$ and the induced norm $\|\mathbf x\|_r:=\sqrt{\langle \mathbf x, \mathbf x\rangle_r}$, where $\mathbf x = \{x_1,\ldots,x_r\}$, $\mathbf y=\{y_1,\ldots,y_r\}$. Moreover, let $\mathbf C_r:=M_1\times\ldots\times M_r \subset \mathbf H_r$ and $\mathbf D_r:=\{\{x,\ldots,x\}\colon x\in \mathcal H\} \subset \mathbf H_r$, and denote by $\cos_r(\mathbf C_r, \mathbf D_r)$ the corresponding cosine of the Friedrichs angle in $\mathbf H_r$; see \eqref{def:cosM1N2inHr} and Remark \ref{rem:notation}.

\begin{theorem}[Optimal Error Bound] \label{int:th:SPM}
For each $k=1,2,\ldots,$ we have
\begin{equation}\label{int:th:SPM:eq}
\left\|\left(\frac 1 r \sum_{i=1}^r P_{M_i}\right)^k - P_{M}\right\|=\cos_r(\mathbf C_r, \mathbf D_r)^{2k}.
\end{equation}
In particular, when $r = 2$, we get $\cos_2(\mathbf C_2, \mathbf D_2)^2  = \frac 1 2 + \frac 1 2 \cos(M_1,M_2)$.
\end{theorem}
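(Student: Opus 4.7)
The plan is to reduce the problem to the Kayalar--Weinert formula of Theorem~\ref{int:th:APM} in the product Hilbert space $\mathbf H_r$, via the diagonal embedding $\iota : \mathcal H \to \mathbf H_r$ given by $\iota(x) := (x, \ldots, x)$. A short calculation shows that $\iota$ is an isometry onto $\mathbf D_r$, its adjoint is $\iota^*(\mathbf y) = \frac{1}{r}\sum_{i=1}^{r} y_i$, and one has $\iota^* \iota = \id$ and $\iota \iota^* = P_{\mathbf D_r}$. This yields the factorizations $T_r = \iota^* P_{\mathbf C_r} \iota$ and $P_M = \iota^* P_{\mathbf C_r \cap \mathbf D_r} \iota$.

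The first step is to reduce $\|T_r^k - P_M\|$ to $\|T_r - P_M\|^k$. Since each $P_{M_i}$ is self-adjoint and fixes $M$ pointwise, the operator $T_r$ is self-adjoint and satisfies $T_r P_M = P_M T_r = P_M$. A short induction then gives $(T_r - P_M)^k = T_r^k - P_M$, and the self-adjointness of $T_r - P_M$ converts this into $\|T_r^k - P_M\| = \|T_r - P_M\|^k$. It therefore suffices to prove $\|T_r - P_M\| = \cos_r(\mathbf C_r, \mathbf D_r)^2$.

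To compute this norm, I would write $T_r - P_M = \iota^* E \iota$, where $E := P_{\mathbf C_r} - P_{\mathbf C_r \cap \mathbf D_r}$ is the orthogonal projection onto $\mathbf C_r \cap (\mathbf C_r \cap \mathbf D_r)^\perp$. Since $\iota^* E \iota$ is positive self-adjoint, its norm is $\sup_{\|x\| \leq 1} \|E \iota x\|_r^2$. Decomposing $x = P_M x + z$ with $z \perp M$, one checks via $P_{M_i} P_M = P_M$ that $E \iota x = (P_{M_i} z)_i$, and hence $\|E \iota x\|_r^2 = \frac{1}{r}\sum_{i=1}^{r} \|P_{M_i} z\|^2$; the supremum is attained at $x = z$, $\|z\| = 1$, $z \perp M$. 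Finally, I would match this expression with $\cos_r^2$ using the standard characterization $\cos^2(A, B) = \sup\{\|P_A \mathbf y\|^2 : \mathbf y \in B \cap (A \cap B)^\perp,\ \|\mathbf y\| \leq 1\}$, applied with $A = \mathbf C_r$ and $B = \mathbf D_r$; the observation that $\mathbf D_r \cap (\mathbf C_r \cap \mathbf D_r)^\perp = \iota(M^\perp)$ and $\|P_{\mathbf C_r} \iota y\|_r^2 = \frac{1}{r}\sum \|P_{M_i} y\|^2$ makes the match immediate.

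For the explicit evaluation when $r = 2$, set $N_i := M_i \cap M^\perp$. Since each $P_{M_i}$ stabilizes $M^\perp$ and agrees with $P_{N_i}$ there, the preceding step reduces the identity to showing $\|P_{N_1} + P_{N_2}\| = 1 + \cos(M_1, M_2)$. The upper bound follows from an eigenvector argument: for $(P_{N_1} + P_{N_2}) y = \lambda y$ with $a := P_{N_1} y$ and $b := P_{N_2} y$, the relations $P_{N_1} b = (\lambda - 1) a$ and $P_{N_2} a = (\lambda - 1) b$, combined with $\|P_{N_1}|_{N_2}\| = \cos(M_1, M_2)$, force $|\lambda - 1| \leq \cos(M_1, M_2)$. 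The matching lower bound is obtained by testing on $y_n := (a_n + b_n)/\|a_n + b_n\|$ for pairs $(a_n, b_n) \in N_1 \times N_2$ of unit vectors with $\langle a_n, b_n\rangle \to \cos(M_1, M_2)$. The main obstacle throughout is tracking the factor $\frac{1}{r}$ from the product inner product, which is essential for $\iota$ to be isometric and for operator norms to pass cleanly between $\mathcal H$ and $\mathbf H_r$.
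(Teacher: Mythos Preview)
Your proof of the main equality \eqref{int:th:SPM:eq} is correct and follows essentially the same route as the paper (see Theorem~\ref{thm:norm}): both reduce to $k=1$ via $(T_r-P_M)^k=T_r^k-P_M$, then identify $\|T_r-P_M\|$ with $\cos_r(\mathbf C_r,\mathbf D_r)^2$ through the Kayalar--Weinert characterization in the product space. The paper does this by inserting the intermediate quantity $\|P_{\mathbf D_r}P_{\mathbf C_r}P_{\mathbf D_r}-P_{\mathbf C_r\cap\mathbf D_r}\|_r$ and factoring it as $\|SS^*\|_r=\|S\|_r^2$ with $S=P_{\mathbf D_r}P_{\mathbf C_r}-P_{\mathbf C_r\cap\mathbf D_r}$; you instead unpack the same identity via the explicit isometry $\iota$ and the projection $E=P_{\mathbf C_r}-P_{\mathbf C_r\cap\mathbf D_r}$. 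These are two phrasings of the same computation, and both ultimately rest on $\mathbf D_r\cap(\mathbf C_r\cap\mathbf D_r)^\perp=\iota(M^\perp)$.

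For the $r=2$ formula the paper gives no proof here (it is quoted from \cite{ReichZalas2017}). Your reduction to $\|P_{N_1}+P_{N_2}\|=1+\cos(M_1,M_2)$ is correct, but the upper-bound argument has a genuine gap in infinite dimensions: the norm of the bounded self-adjoint operator $P_{N_1}+P_{N_2}$ need not be attained at an eigenvector, so the relations $P_{N_1}b=(\lambda-1)a$ and $P_{N_2}a=(\lambda-1)b$ are not available as written. You must either run the same computation with a Weyl sequence $(y_n)$ satisfying $\|(P_{N_1}+P_{N_2}-\lambda)y_n\|\to 0$ and carry the $o(1)$ terms through, or bypass the issue by invoking the known identity $\|P_{N_1}+P_{N_2}\|=1+\|P_{N_2}P_{N_1}\|$ together with $\|P_{N_2}P_{N_1}\|=\cos(N_1,N_2)=\cos(M_1,M_2)$. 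Your lower-bound test vectors $y_n=(a_n+b_n)/\|a_n+b_n\|$ do work, but the verification is not as immediate as your sketch suggests and should be written out.
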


Recall that the alternating projection formalization introduced above is due to Pierra \cite{Pierra1984}, who observed that for each $x\in \mathcal H$, $\mathbf x = (x,\ldots,x)$ and $k=1,2,\ldots$, we have
$\|( \frac 1 r \sum_{i=1}^r P_{M_i})^k(x) - P_{M}(x)\| = \|(P_{\mathbf D_r} P_{\mathbf C_r})^k(\mathbf x) - P_{\mathbf C_r \cap \mathbf D_r}(\mathbf x)\|_r.$ Note here that by simply combining this with Theorem \ref{int:th:APM}, we only obtain an upper bound given by $\cos_r(\mathbf C_r, \mathbf D_r)^{2k-1}$.  The properties of the cosine $\cos_r(\mathbf C_r, \mathbf D_r)$ were studied in \cite{BadeaGrivauxMuller2011}, where equality \eqref{int:th:SPM:eq} was shown for $k = 1$.

It turns out that when $r = 2$, the cosine of the Friedrichs angle appears in the optimal rate estimates for many other well-known projection methods. See, for example, \cite{BauschkeCruzNghiaPhanWang2016} for the relaxed alternating projection method, \cite{BauschkeCruzNghiaPhanWang2014} for the Douglas-Rachford method or \cite{ArtachoCampoy2019} for the method of averaged alternating modified reflections. We refer the interested reader to \cite[Table 1]{ArtachoCampoy2019}, where one can find an elegant comparison of rates.

\subsection{Contribution and Organization of the Paper}

The purpose of the present paper is to investigate the asymptotic properties of the simultaneous projection operator, analogous to those mentioned above, in the case where the number of subspaces $M_i$ is possibly countably infinite, that is, when $r \in \mathbb Z_+\cup\{\infty\}$. The aforesaid operator is defined by $T_\omega:=\sum_{i=1}^r \omega_i P_{M_i}$, where $\omega = \{\omega_i\}_{i=1}^r$ is a vector/sequence of weights $\omega_i \in (0,1)$ the sum of which equals one.

We carry out our study by adjusting the product space formalization of Pierra. For this purpose, for each operator $T_\omega$, we define the weighted product space $(\mathbf H_\omega, \langle \cdot, \cdot \rangle_\omega)$, which is the analogue of $(\mathbf H_r, \langle \cdot, \cdot \rangle_r)$, the subspaces $\mathbf C_\omega$ and $\mathbf D_\omega$ in $\mathbf H_\omega$, which correspond to $\mathbf C_r$ and $\mathbf D_r$ in $\mathbf H_r$ and finally, the cosine of the Friedrichs angle $\cos_\omega(\mathbf C_\omega, \mathbf D_\omega)$, which is an analogue of $\cos_r(\mathbf C_r, \mathbf D_r)$; see Section \ref{sec:preliminaries} for fully-fledged definitions, notation and basic properties. We note here only that for $r = \infty$ the product space $\mathbf H_\omega$ coincides with $\ell^2_\omega(\mathcal H):=\{\mathbf x = \{x_i\}_{i=1}^\infty \colon  x_i\in\mathcal H,\  i=1,2,\ldots,\  \sum_{i=1}^{\infty}\omega_i\|x_i\|^2 <\infty\}$.

We begin this study in Section \ref{sec:Pierra} by showing the explicit connection between the operator $T_\omega$, the projection onto $M$ and the projections onto $\mathbf C_\omega$, $\mathbf D_\omega$ and $\mathbf C_\omega \cap \mathbf D_\omega$. Within this framework, we establish that the iterates of the simultaneous projection method $\{T_\omega^k(x)\}_{k=1}^\infty$ converge in norm to $P_M(x)$ for each starting point $x \in \mathcal H$, even when $r=\infty$. Moreover, using the powers of $\cos_\omega(\mathbf C_\omega, \mathbf D_\omega)^2$, we find an expression for the norm $\|T_\omega^k - P_M\|$, which, when smaller than $1$, becomes the optimal error bound for linear convergence.

In Section \ref{sec:cosCD} we present a detailed study of the cosine $\cos_\omega(\mathbf C_\omega, \mathbf D_\omega)$. In particular, we provide an alternative formula for it, where the supremum is taken over a possibly smaller set (Lemma \ref{lem:cosCD1}). Furthermore, we find a new estimate, which, depending on the weights $\omega$, may hold as a strict inequality, or as an equality (see Lemma \ref{lem:cosCD2} and Example \ref{ex:orthogonal}). The important property of this estimate is that it must hold as an equality whenever the subspaces $\mathbf C_\omega$ and $\mathbf D_\omega$ are parallel and in this case the equality holds for all weights $\omega$ (Theorem \ref{thm:cosCDequal1}). On the other hand, we show that the cosine can be easily evaluated when the subspaces $M_i$ are pairwise orthogonal (Proposition \ref{prop:orthogonal}). In addition, we point out that by reducing the multiple copies of the subspaces $M_i$, the cosine $\cos_\omega(\mathbf C_\omega, \mathbf D_\omega)$ can be computed in a simpler manner. For this reason we introduce a rearrangement lemma (see the \hyperref[sec:Appendix]{Appendix}). On the other hand, when $r = \infty$, we can approximate the cosine $\cos_\omega(\mathbf C_\omega, \mathbf D_\omega)$ by a limit process of cosines between $\mathbf C_q$ and $\mathbf D_q$ defined in smaller product spaces, where $q\in\mathbb Z_+$ and $q\to \infty$.

In Section \ref{sec:AsymptoticProp} we return to the asymptotic properties of the simultaneous projection method. Building on the idea of $\ell^2$-summability, we replace the subspace $\sum_{i=1}^{r}M_i^\perp$, which plays a central role in Theorems \ref{int:th:dichotomy}--\ref{int:th:poly}, by another $\omega$-dependent subspace, which for $r = \infty$ becomes $\{\sum_{i=1}^{\infty} \omega_i x_i \colon x_i \in M_i^\perp,\ i=1,2,\ldots,\ \sum_{i=1}^{\infty}\omega_i\|x_i\|^2<\infty\}$; see \eqref{prop:Minkowski:Aw}. We show that this subspace, which we denote by $A_\omega (\mathbf C_\omega^\perp)$, plays a similar role to that of $\sum_{i=1}^{r}M_i^\perp$. In particular, the closedness of this subspace, or lack thereof, determines the dichotomy between linear and arbitrarily slow convergence. The latter case also implies the super-polynomially fast convergence on some dense linear subspace of $\mathcal H$. Moreover, $A_\omega (\mathbf C_\omega^\perp)$ becomes the set of ``good'' starting points on which we always have at least a polynomial rate of convergence.

It is not difficult to see, that when $r = \infty$, the sets $A_\omega (\mathbf C_\omega^\perp)$ may differ for different sequences of weights $\omega$; see Example \ref{ex:AwCwAreDifferent}. In spite of this, we find that the closedness of $A_\omega (\mathbf C_\omega^\perp)$ in $\mathcal H$ does not depend on the weights $\omega$, but only on the subspaces $M_i$ themselves. To be more precise, we prove that if the set $A_\omega (\mathbf C_\omega^\perp)$ is closed for one sequence of weights $\omega$, then it must be closed for all sequences of weights and be equal to $M^\perp$ (see Theorem \ref{thm:cosCDequal1} in Section \ref{sec:cosCD} phrased in the language of cosines  and Proposition \ref{prop:Minkowski}). Hence it cannot happen that the rate of convergence is linear for one sequence $\omega$, but is arbitrarily slow for another one. This observation slightly strengthens the aforementioned dichotomy theorem.

\section{Preliminaries} \label{sec:preliminaries}
From now on, let
\begin{equation}\label{}
  r \in \mathbb Z_+ \cup \{\infty\}
\end{equation}
and let
\begin{equation}
\Omega_r := \left\{ \{\omega_i\}_{i=1}^r \colon \ \omega_i>0,\ i=1,\ldots,r,\  \sum_{i=1}^{r} \omega_i = 1 \right\}.
\end{equation}
In this section we extend the notation used in the introduction for the particular vector $\omega = \{1/r,\ldots, 1/r\}\in \Omega_r$ to an arbitrary vector $\omega \in \Omega_r$, when $r\in \mathbb Z_+$, and to an arbitrary sequence $\omega \in \Omega_\infty$, when $r=\infty$.

For each $\omega \in \Omega_r$, we define a weighted product $\ell^2$-space and an associated weighted inner product by
\begin{equation}\label{def:Hw}
  \mathbf H_\omega :=
  \begin{cases}
    \mathcal H^r, & \text{if } r\in\mathbb Z_+, \\
    \ell^2_\omega(\mathcal H), & \text{if } r = \infty
  \end{cases}
  \qquad \text{and} \qquad \langle \mathbf x, \mathbf y \rangle_{\omega} := \sum_{i=1}^{r} \omega_i\langle x_i, y_i\rangle,
\end{equation}
where $\ell^2_\omega(\mathcal H):=\{\mathbf x = \{x_i\}_{i=1}^\infty \colon  x_i\in\mathcal H ,\  i=1,2,\ldots,\  \sum_{i=1}^{\infty}\omega_i\|x_i\|^2 <\infty\}$ and where $\mathbf x = \{x_i\}_{i=1}^r, \mathbf y = \{y_i\}_{i=1}^r \in \mathbf H_\omega$. One can verify that the pair $(\mathbf H_\omega, \langle \cdot, \cdot \rangle_{\omega})$ is a Hilbert space. The induced norm on $\mathbf H_\omega$ and the operator norm on $\mathcal B(\mathbf H_\omega)$, the Banach space of all bounded linear operators on $\mathbf H_\omega$, are both denoted by $\|\cdot\|_{\omega}$. Notice that, when $r=\infty$, the weighted $\ell^2$-spaces $\mathbf H_\omega$ may be different for different $\omega \in \Omega_\infty$.

\begin{example}[$r=\infty$]\label{ex:HwAreDifferent}
  Let $\alpha >0$ and $\beta > 1$. Consider the weighted $\ell^2$-space $\mathbf H_{\omega,\beta}=\ell^2_{\omega,\beta}(\mathcal H)$ with weights $\omega_{i,\beta}:=1/(i^\beta s_\beta) $, where $s_\beta:=\sum_{i=1}^{\infty}1/i^\beta$. Moreover, let $\mathbf x_\alpha=\{x_i\}_{i=1}^\infty$ be any sequence with $\|x_i\|=i^{\alpha/2}$. Then $\mathbf x_\alpha \in \mathbf H_{\omega,\beta}$ if and only if $\beta > 1+\alpha$. Consequently, for any fixed $\alpha>0$, we can find $\beta \neq \beta'$  (for example, $1<\beta'<1+\alpha<\beta$) such that $\mathbf x_\alpha \in \mathbf H_{\omega,\beta}$, but $\mathbf x_\alpha \notin \mathbf H_{\omega,\beta'}$.
\end{example}

For each $\omega \in \Omega_r$, we define the \emph{averaging operator} $ A_\omega \colon \mathbf H_\omega \to \mathcal H$ by
\begin{equation}\label{def:A}
  A_\omega(\mathbf x) := \sum_{i=1}^{r}\omega_i x_i,
\end{equation}
where $\mathbf x = \{x_i\}_{i=1}^r \in \mathbf H_\omega$. Note that $A_\omega$ is well defined for $r = \infty$, as the following proposition shows.

\begin{proposition}[$r=\infty$]\label{prop:abs}
  If $\mathbf x = \{x_i\}_{i=1}^\infty\in \mathbf H_\omega$ for some $\omega \in \Omega_\infty$, then the series $\sum_{i=1}^{\infty}\omega_i x_i$ is absolutely convergent, hence unconditionally (compare with the  \hyperref[sec:Appendix]{Appendix}).
\end{proposition}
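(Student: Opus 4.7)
The plan is a one-line application of the Cauchy--Schwarz inequality in the scalar sequence space $\ell^2$. Write
\begin{equation*}
\sum_{i=1}^{\infty} \omega_i \|x_i\| = \sum_{i=1}^{\infty} \sqrt{\omega_i} \cdot \sqrt{\omega_i}\, \|x_i\|,
\end{equation*}
and apply Cauchy--Schwarz to the two sequences $\{\sqrt{\omega_i}\}_{i=1}^\infty$ and $\{\sqrt{\omega_i}\,\|x_i\|\}_{i=1}^\infty$. This gives
\begin{equation*}
\sum_{i=1}^{\infty} \omega_i \|x_i\| \leq \left( \sum_{i=1}^{\infty} \omega_i \right)^{1/2} \left( \sum_{i=1}^{\infty} \omega_i \|x_i\|^2 \right)^{1/2} = 1 \cdot \|\mathbf x\|_\omega < \infty,
\end{equation*}
where the first factor equals $1$ because $\omega \in \Omega_\infty$ and the second factor is finite because $\mathbf x \in \mathbf H_\omega = \ell^2_\omega(\mathcal H)$.

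Hence $\sum_{i=1}^\infty \omega_i x_i$ is absolutely convergent in $\mathcal H$. Since $\mathcal H$ is a Banach space, absolute convergence implies norm convergence, and a classical theorem states that every absolutely convergent series in a Banach space converges unconditionally (any rearrangement yields the same sum). No step presents any real obstacle; the only mild subtlety is noticing the correct splitting $\omega_i = \sqrt{\omega_i}\cdot\sqrt{\omega_i}$ so that the resulting Cauchy--Schwarz estimate exactly matches the definition of the weighted norm $\|\cdot\|_\omega$ and uses the normalization $\sum_i \omega_i = 1$.
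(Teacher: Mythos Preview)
Your proof is correct, and it takes a genuinely different route from the paper. The paper splits the index set into $I:=\{i:\|x_i\|<1\}$ and $J:=\{j:\|x_j\|\geq 1\}$, bounds $\omega_i\|x_i\|$ by $\omega_i$ on $I$ and by $\omega_i\|x_i\|^2$ on $J$, and arrives at the estimate $\sum_i \omega_i\|x_i\| \leq 1 + \|\mathbf x\|_\omega^2$. You instead apply Cauchy--Schwarz to the factorization $\omega_i\|x_i\| = \sqrt{\omega_i}\cdot\sqrt{\omega_i}\,\|x_i\|$, which yields the sharper bound $\sum_i \omega_i\|x_i\| \leq \|\mathbf x\|_\omega$ in a single line. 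Your argument is cleaner and gives a better constant; the paper's argument is marginally more elementary in that it avoids invoking Cauchy--Schwarz and uses only the trivial inequalities $t<1$ and $t\leq t^2$ on the respective ranges. Either approach suffices for the proposition, since only finiteness is needed.
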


\begin{proof}
  Observe that $\omega_i \|x_i\| < \omega_i$ for all $i \in I:=\{i\colon \|x_i\|<1\}$ and $\omega_j \|x_j\| \leq \omega_j \|x_j\|^2$ for all $j \in J:=\{j\colon \|x_j\|\geq 1\}$. Consequently, for each $n\geq 1$, we get
  \begin{equation}\label{}
     \sum_{i=1}^{n} \|\omega_i x_i\|
    =  \sum_{\substack{1 \leq i\leq n \\ i\in I}} \omega_i \|x_i\|
    + \sum_{\substack{1 \leq j \leq n\\ j\in J}} \omega_j \|x_j\|
    \leq \sum_{\substack{1 \leq i\leq n \\ i\in I}} \omega_i
    + \sum_{\substack{1 \leq j \leq n\\ j\in J}} \omega_j \|x_j\|^2
    \leq 1 + \|\mathbf x\|_\omega^2 <\infty,
  \end{equation}
  with the convention that the summation over the empty set is zero.
\end{proof}

In particular, the unconditional convergence of $A_\omega(\mathbf x)$ for $r=\infty$ gives us a lot of freedom in rearranging the summands in \eqref{def:A}; see Lemma \ref{lem:rearrangement} in the \hyperref[sec:Appendix]{Appendix}. Moreover, $A_\omega$ is a norm one linear operator which for all $\mathbf x \in \mathbf H_\omega$ and $z\in\mathcal H$ satisfies
\begin{equation}\label{def:Aweak}
  \langle A_\omega(\mathbf x), z \rangle
  = \left\langle \sum_{i=1}^r \omega_i x_i, z \right\rangle
  = \sum_{i=1}^r \omega_i \langle x_i, z \rangle.
\end{equation}

Let $M_i$ be a nontrivial ($M_i\neq \{0\}$) closed and linear subspace of $\mathcal H$, $i=1,\ldots,r$, and let
\begin{equation}\label{def:MinH}
M := \bigcap_{i=1}^{r} M_i.
\end{equation}
For each $\omega \in \Omega_r$, the \emph{simultaneous projection operator} $T_\omega\colon \mathcal H \to \mathcal H$ is defined by
\begin{equation}\label{def:T}
  T_\omega(x) := \sum_{i=1}^{r} \omega_i P_{M_i}(x),
\end{equation}
where $x\in \mathcal H$. Note that $T_\omega (x) = A_\omega(\{P_{M_i}(x)\}_{i=1}^{r})$ and $\sum_{i=1}^{r}\omega_i\|P_{M_i}(x)\|^2 \leq \sum_{i=1}^{r}\omega_i\|x\|^2 = \|x\|^2<\infty$ due to the equalities $\|P_{M_i}\| = 1$, $i=1,\ldots,r$;  see, for example, \cite[Theorem 5.13]{Deutsch2001}.  Hence, for $r=\infty$, the series $T_\omega(x)$ is absolutely convergent by Proposition \ref{prop:abs}. Moreover, since each projection $P_{M_i}$ is self-adjoint,  see again \cite[Theorem 5.13]{Deutsch2001},  this also holds for the simultaneous projection $T_\omega$. Furthermore, by the convexity of $\|\cdot\|$, we have $\|T_\omega\|\leq 1$ and $\fix T_\omega = M$. Indeed, the inclusion $M \subset \fix T_\omega$ is obvious and if there is $x\in \fix T_\omega$ such that $x \notin M_j$ for some $j\in \{1,\ldots,r\}$, then $\|P_{M_j}(x)\| < \|x\|$ and thus we arrive at a contradiction as $\|x\|=\|T_\omega(x)\| \leq \sum_{i=1}^{r}\omega_i \|P_{M_i}(x)\| < \|x\|$.

Following \eqref{int:def:cosM1M2}, for each $\omega \in \Omega_r$, we define the \emph{cosine of the Friedrichs angle} between two nontrivial closed and linear subspaces $\mathbf M_1, \mathbf M_2$ of $\mathbf H_\omega$ by
\begin{equation}\label{def:cosM1N2inHr}
  \cos_\omega(\mathbf M_1, \mathbf M_2)
  := \sup \left\{ \langle \mathbf x, \mathbf y \rangle_\omega \colon
  \begin{array}{l}
    \mathbf x \in \mathbf M_1 \cap (\mathbf M_1 \cap \mathbf M_2)^{\perp_\omega} \cap \mathbf B_\omega,\\
    \mathbf y \in \mathbf M_2 \cap (\mathbf M_1 \cap \mathbf M_2)^{\perp_\omega} \cap \mathbf B_\omega
  \end{array}
  \right\} \in [0,1],
\end{equation}
where $\mathbf B_\omega := \{\mathbf x \in \mathbf H_\omega \colon \|\mathbf x\|_\omega \leq 1\}$. We use the symbol ``$\perp_\omega$'' for the orthogonal complement in $\mathbf H_\omega$ which, for a closed linear subspace $\mathbf M$ of $\mathbf H_\omega$, is defined by
\begin{equation}\label{def:perp}
  \mathbf M^{\perp_\omega} := \{ \mathbf x \in \mathbf H_\omega \colon \langle \mathbf x, \mathbf y \rangle_\omega = 0 \text{ for all } \mathbf y \in \mathbf M\};
\end{equation}
see also Remark \ref{rem:perp}.

We now extend the definition for the product set $\mathbf C_r$ and the diagonal set $\mathbf D_r$ to
\begin{equation}\label{def:CinHw}
  \mathbf C_{\omega} :=
  \begin{cases}
    \prod_{i=1}^{r} M_i, & \text{if } r \in \mathbb Z_+  \\
    \{\{x_i\}_{i=1}^{\infty} \colon x_i \in M_i,\ i=1,2, \ldots,\ \sum_{i=1}^{\infty}\omega_i\| x_i\|^2 < \infty\}, & \text{if } r = \infty.
  \end{cases}
\end{equation}
and
\begin{equation}\label{def:DinHw}
  \mathbf D_{\omega} := \big\{\{x\}_{i=1}^{r} \colon x \in \mathcal H\big\},
\end{equation}
respectively, where $\omega \in \Omega_r$. It is not difficult to see that both $\mathbf C_\omega$ and $\mathbf D_\omega$ are closed and linear subspaces of $\mathbf H_\omega$. In this paper we are interested in the cosine between $\mathbf C_\omega$ and $\mathbf D_\omega$ and its connection to
\begin{equation}\label{def:MinHw}
  \mathbf M_{\omega} :=
  \begin{cases}
    \prod_{i=1}^{r} M, & \text{if } r \in \mathbb Z_+  \\
    \{\{x_i\}_{i=1}^{\infty} \colon x_i \in M,\ i=1,2, \ldots,\ \sum_{i=1}^{\infty}\omega_i\| x_i\|^2 < \infty\}, & \text{if } r = \infty
  \end{cases}
\end{equation}
and
\begin{equation}\label{def:DeltainHw}
  \mathbf \Delta_{\omega} := \prod_{i=1}^{r} B,
\end{equation}
where $B:=\{x\in\mathcal H \colon \|x\|\leq 1\}$. For this reason, we introduce the following \emph{configuration constant}:
\begin{equation}\label{def:cM1N2inHr}
  c_\omega(\mathbf M_1, \mathbf M_2)
  := \sup \left\{ \langle \mathbf x, \mathbf y \rangle_\omega \colon
  \begin{array}{l}
    \mathbf x \in \mathbf M_1 \cap (\mathbf M_1 \cap \mathbf M_2)^{\perp_\omega} \cap \mathbf \Delta_\omega,\\
    \mathbf y \in \mathbf M_2 \cap (\mathbf M_1 \cap \mathbf M_2)^{\perp_\omega} \cap \mathbf \Delta_\omega
  \end{array}
  \right\} \in [0,1].
\end{equation}

\begin{remark}[Space in Question for ``$\perp$''] \label{rem:perp}
  In our further study, by fixing $\omega \in \Omega_r$, we restrict our analysis only to the Hilbert space $(\mathbf H_\omega, \langle \cdot, \cdot \rangle_{\omega})$. In particular, when $r = \infty$, we consider the space $(\ell^2_\omega(\mathcal H), \langle \cdot, \cdot \rangle_{\omega})$. Therefore, by using the subscript ``$\omega$'' added to a set, we implicitly assume that such a set is considered in $(\mathbf H_\omega, \langle \cdot, \cdot \rangle_{\omega})$. Knowing the underlying space becomes very important especially for the operation of the orthogonal complement. For example, the diagonal set $\mathbf D_\omega$ is the same for all $\omega\in \Omega_r$ while its orthogonal complement $(\mathbf D_\omega)^{\perp_\omega}$ may be different for different $\omega$'s (see Proposition \ref{prop:Minkowski}). Having this in mind, we may simply write ``$\perp$'' instead of ``$\perp_\omega$'' when the underlying space $\mathbf H_\omega$ is known from the context. For example, $\mathbf D_\omega^{\perp}$ reads as $(\mathbf D_\omega)^{\perp_\omega}$.
\end{remark}

We proceed with the following proposition.

\begin{proposition} \label{prop:inclusion}
Let $\omega \in \Omega_r$. We have
\begin{equation}\label{prop:inclusion:Mperp}
  \mathbf M_{\omega}^\perp =
  \begin{cases}
    \prod_{i=1}^{r} M^\perp, & \text{if } r \in \mathbb Z_+  \\
    \{\{x_i\}_{i=1}^{\infty} \colon x_i \in M^\perp,\ i=1,2, \ldots,\ \sum_{i=1}^{\infty}\omega_i\| x_i\|^2 < \infty\}, & \text{if } r = \infty.
  \end{cases}
\end{equation}
and $\mathbf D_{\omega} \cap \mathbf M_{\omega}^\perp = \mathbf D_{\omega} \cap (\mathbf C_{\omega} \cap \mathbf D_{\omega})^\perp.$   However, the inclusion
\begin{equation} \label{prop:inclusion:eq}
  \mathbf C_{\omega} \cap \mathbf M_{\omega}^\perp \subset \mathbf C_{\omega} \cap (\mathbf C_{\omega} \cap \mathbf D_{\omega})^\perp
\end{equation}
is strict if $M\neq \{0\}$ and $M_j \neq M$ for some $j \in \{1,\ldots,r\}$.
\end{proposition}

\begin{proof}
  We begin by showing the first equality. We denote the set on the right-hand side of \eqref{prop:inclusion:Mperp} by $\mathbf M_\omega'$. It is not difficult to see that $\mathbf M_{\omega}^\perp \supset \mathbf M_\omega'$. We now demonstrate the opposite inclusion ``$\subset$''. Indeed, let $\mathbf x = \{x_i\}_{i=1}^r \in \mathbf M_\omega^\perp$. Then, by \eqref{def:perp}, $\langle \mathbf x, \mathbf y \rangle_\omega = 0$ for all $\mathbf y = \{y_i\}_{i=1}^r \in \mathbf M_\omega$, where $y_i \in M$. By choosing $j\in \{1,\ldots,r\}$ and $y_i:=0$ for all $i\neq j$, we obtain $\langle x_j, y_j\rangle = 0$ for all $y_j \in M$, and thus we must have $x_j \in M^\perp$. The arbitrariness of $j$ implies that  $x_j \in M^\perp$ for all $j\in\{1,\ldots,r\}$. Note that up to now, the above-presented argument holds for $r \in \mathbb Z_+$ as well as for $r = \infty$.  In the latter case ($r = \infty$), by \eqref{def:perp}, we see that, $\mathbf x \in \mathbf H_\omega$.  Consequently, $\sum_{i=1}^{\infty}\omega_i \|x_i\|^2 < \infty$.  This shows that $\mathbf x \in \mathbf M_\omega'$, as asserted.

  Next we show the second equality  and inclusion \eqref{prop:inclusion:eq}. To this end, observe that, by the definition of $\mathbf C_\omega$ and $\mathbf D_\omega$, we get
  \begin{equation}\label{pr:inclusion:CD}
    \mathbf C_{\omega} \cap \mathbf D_{\omega}
    = \left\{\{x\}_{i=1}^r \colon x \in M \right\}.
  \end{equation}
  Consequently, by using the first equality, we see that
  \begin{align}\label{} \nonumber
    \mathbf D_\omega \cap \mathbf M_\omega^\perp
    & = \left\{ \{x\}_{i=1}^r \colon x \in M^\perp \right\} \\ \nonumber
    & = \left\{ \{x\}_{i=1}^r \colon \langle x, y \rangle = 0 \text{ for all } y \in M \right\} \\ \nonumber
    & = \left\{ \mathbf x \in \mathbf D_\omega \colon \langle \mathbf x, \mathbf y \rangle_\omega = 0 \text{ for all } \mathbf y \in \mathbf C_{\omega} \cap \mathbf D_{\omega} \right\} \\
    & = \mathbf D_\omega \cap (\mathbf C_{\omega} \cap \mathbf D_{\omega})^\perp
  \end{align}
  and
  \begin{align}\label{} \nonumber
    \mathbf C_{\omega} \cap \mathbf M_{\omega}^\perp
    & = \left\{\{x_i\}_{i=1}^r \in \mathbf H_\omega \colon x_i \in M_i\cap M^\perp,\ i=1,\ldots,r \right\} \\ \nonumber
    & \subset \left\{\{x_i\}_{i=1}^r \in \mathbf H_\omega \colon x_i \in M_i,\ i=1,\ldots,r,\  \sum\nolimits_{i=1}^r \omega_i x_i \in  M^\perp \right\} \\
    & = \mathbf C_{\omega} \cap (\mathbf C_{\omega} \cap \mathbf D_{\omega})^\perp.
  \end{align}

  Finally, we show that inclusion \eqref{prop:inclusion:eq} is strict.  The assumption $M_j \neq M$ guarantees that the subspace $M_j \cap M^\perp$ is nontrivial. Indeed, by the orthogonal decomposition theorem (that is, $I = P_{M} + P_{M^\perp}$), for any $x\in M_j \setminus M$, the point $x_j:= P_{M^\perp}(x) = x - P_{M}(x) \neq 0 $ and $x_j \in M_j \cap M^\perp$.

  Let $m\in M$ be nonzero. Define $\mathbf y = \{y_i\}_{i=1}^r$ by $y_j:=\frac 1 {\omega_j} (m+x_j)$, $y_{j+1}:= - \frac 1 {\omega_{j+1}} m$ and $y_i:=0$ for all $i\neq j,j+1$.  Note that $y_j \notin M^\perp$. Thus $\mathbf y \notin \mathbf C_{\omega} \cap \mathbf M_{\omega}^\perp$.  On the other hand, $y_i \in M_i$ for all $i=1,\ldots,r,$ and, moreover, for any $m'\in M$, we have
  \begin{equation}
   \sum_{i=1}^{r} \omega_i \langle  y_i, m'\rangle
   = \langle (m+x_j) - m, m'\rangle
   = \langle x_j, m'\rangle = 0,
  \end{equation}
  which proves that $\mathbf y \in \mathbf C_{\omega} \cap (\mathbf C_{\omega} \cap \mathbf D_{\omega})^\perp$.
\end{proof}

In the next proposition we show the connections among the sets $\mathbf C_{\omega}, \mathbf D_{\omega}$ and the averaging operator $A_\omega$. When $r \in \mathbb Z_+$, equality \eqref{prop:Minkowski:M} can be found in \cite[Theorem 4.6 (5)]{Deutsch2001}.

\begin{proposition}\label{prop:Minkowski}
  Let $\omega \in \Omega_r$. We have
  \begin{equation}
  \mathbf C_{\omega}^\perp =
  \begin{cases}
    \prod_{i=1}^{r} M_i^\perp, & \text{if } r \in \mathbb Z_+  \\
    \{\{x_i\}_{i=1}^{\infty} \colon x_i \in M_i^\perp,\ i=1,2, \ldots,\ \sum_{i=1}^{\infty}\omega_i \|x_i\|^2 < \infty\}, & \text{if } r = \infty
  \end{cases}
  \end{equation}
  and $\mathbf D_\omega^\perp = \mathcal N (A_\omega)$ -- the null space of $A_\omega$. Consequently, the subspace $\mathbf C_\omega^\perp + \mathbf D_\omega^\perp$ is closed in $\mathbf H_\omega$ if and only if the subspace
  \begin{equation}\label{prop:Minkowski:Aw}
  A_\omega (\mathbf C_\omega^\perp) =
  \begin{cases}
    \sum_{i=1}^{r} M_i^\perp, & \text{if } r\in \mathbb Z_+,\\
    \big\{\sum_{i=1}^{\infty} \omega_i x_i \colon x_i \in M_i^\perp,\ i=1,2,\ldots,\ \sum_{i=1}^{\infty}\omega_i\|x_i\|^2<\infty \big\}, & \text{if } r= \infty
  \end{cases}
  \end{equation}
  is closed in $\mathcal H$. Moreover,
  \begin{equation}\label{prop:Minkowski:M}
    \overline{A_\omega (\mathbf C_\omega^\perp)} = M^\perp.
  \end{equation}
\end{proposition}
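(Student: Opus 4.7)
My plan is to establish the four assertions in the stated order. For the two orthogonal complements I would repeat the argument from the first paragraph of the proof of Proposition \ref{prop:inclusion}, with $M$ replaced by $M_i$: if $\mathbf x = \{x_i\}_{i=1}^r \perp \mathbf C_\omega$, then testing against vectors $\mathbf y$ supported in a single slot $j$ with $y_j \in M_j$ forces $x_j \in M_j^\perp$, and the reverse inclusion is immediate from $\sum_i \omega_i \langle x_i, y_i\rangle = 0$. For $\mathbf D_\omega^\perp$, I would appeal to \eqref{def:Aweak}: the condition $\mathbf x \perp \mathbf D_\omega$ reads $\langle A_\omega(\mathbf x), y\rangle = \sum_i \omega_i \langle x_i, y\rangle = 0$ for every $y \in \mathcal H$, which is precisely $A_\omega(\mathbf x) = 0$.

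The closedness equivalence then follows by a standard open mapping argument. The operator $A_\omega \colon \mathbf H_\omega \to \mathcal H$ is surjective, since for any $z \in \mathcal H$ the vector $(z/\omega_1, 0, 0, \ldots)$ lies in $\mathbf H_\omega$ and maps to $z$. The open mapping theorem thus provides a topological isomorphism $\tilde A_\omega \colon \mathbf H_\omega/\mathcal N(A_\omega) \to \mathcal H$, and the quotient projection $\pi$ is open. Because $\mathbf C_\omega^\perp + \mathbf D_\omega^\perp = \mathbf C_\omega^\perp + \mathcal N(A_\omega) = \pi^{-1}(\pi(\mathbf C_\omega^\perp))$, this subspace is closed in $\mathbf H_\omega$ if and only if $\pi(\mathbf C_\omega^\perp)$ is closed in the quotient, which in turn (via $\tilde A_\omega$) is equivalent to $A_\omega(\mathbf C_\omega^\perp)$ being closed in $\mathcal H$.

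The explicit formula in \eqref{prop:Minkowski:Aw} is read off directly from the description of $\mathbf C_\omega^\perp$ and the definition of $A_\omega$. For the final identity $\overline{A_\omega(\mathbf C_\omega^\perp)} = M^\perp$, I would combine the classical duality $M^\perp = (\bigcap_i M_i)^\perp = \overline{\sum_i M_i^\perp}$ (where $\sum_i M_i^\perp$ denotes the algebraic sum of finite combinations) with a two-sided sandwich: on one hand, every finite sum $\sum_{i=1}^n y_i$ with $y_i \in M_i^\perp$ is $A_\omega(\mathbf x)$ for $x_i := y_i/\omega_i$ ($i \leq n$) and $x_i := 0$ otherwise, which clearly lies in $\mathbf C_\omega^\perp$; on the other hand, for $r = \infty$ every element $\sum_i \omega_i x_i \in A_\omega(\mathbf C_\omega^\perp)$ is the norm limit of its truncations, which sit in the algebraic sum $\sum_i M_i^\perp \subset M^\perp$. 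The only delicate point is the $r = \infty$ case, where one must verify that the series defining $A_\omega(\mathbf x)$ is well defined and absolutely convergent, which is exactly the content of Proposition \ref{prop:abs}; beyond that, the argument is a routine application of the open mapping theorem together with the standard duality between intersections and sums.
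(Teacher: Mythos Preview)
Your proposal is correct and, for the first three assertions, matches the paper's proof almost verbatim: the paper also refers back to Proposition~\ref{prop:inclusion} for $\mathbf C_\omega^\perp$, uses \eqref{def:Aweak} for $\mathbf D_\omega^\perp = \mathcal N(A_\omega)$, and for the closedness equivalence simply cites \cite[section 17H, p.~142]{Holmes1975}, which is exactly the open-mapping/quotient argument you spell out. The only genuine difference is in the last identity $\overline{A_\omega(\mathbf C_\omega^\perp)} = M^\perp$: the paper computes $(A_\omega(\mathbf C_\omega^\perp))^\perp = M$ directly by testing against single-slot vectors $\mathbf y$ with $y_j \in M_j^\perp$ and then takes the double orthogonal complement, whereas you sandwich $A_\omega(\mathbf C_\omega^\perp)$ between the algebraic sum $\sum_i M_i^\perp$ and its closure $M^\perp$; both routes are equally short and rely on the same single-slot trick, just applied at a different stage.
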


\begin{proof}
The first equality can be established by using an argument similar to the one presented in the proof of Proposition \ref{prop:inclusion} with $M$ replaced by $M_j$.

In order to show the second equality, take $\mathbf x \in \mathbf D_\omega^\perp$. Then, by \eqref{def:Aweak}, for all $\mathbf y = \{y\}_{i=1}^r \in \mathbf D_\omega$, we have $\langle \mathbf x, \mathbf y\rangle_\omega = \langle A_\omega (\mathbf x), y\rangle = 0$. In particular, by taking $y:=A_\omega(\mathbf x)$, we see that $A_\omega(\mathbf x) = 0$, that is, $\mathbf x \in \mathcal N(A_\omega)$. On the other hand, it is easy to see that when $\mathbf x \in \mathcal N(A_\omega)$, then for all $\mathbf y = \{y\}_{i=1}^r \in \mathbf D_\omega$, we have $0 = \langle A_\omega(\mathbf x), y\rangle = \langle \mathbf x, \mathbf y \rangle_\omega$, that is, $\mathbf x \in \mathbf D_\omega^\perp$. This shows the second equality.

Recall that $ A_\omega$ is linear and bounded. In view of
\cite[section 17H, p. 142]{Holmes1975},  the set $A_\omega(\mathbf C_\omega^\perp)$ is closed in $\mathcal H$ if and only if $\mathbf C_\omega^\perp + \mathcal N(A_\omega)$ is closed in $\mathbf H_\omega$. The equalities in \eqref{prop:Minkowski:Aw} follow from the above discussion.

We now focus on \eqref{prop:Minkowski:M}, where we first show that
\begin{equation}\label{pr:Minkowski:M}
  \big(A_\omega (\mathbf C_\omega^\perp)\big)^\perp = M.
\end{equation}
It is not difficult to see that $M \subset \big(A_\omega (\mathbf C_\omega^\perp)\big)^\perp$. In order to demonstrate the opposite inclusion ``$\supset$'', take $x \in \big(A_\omega (\mathbf C_\omega^\perp)\big)^\perp$. Consequently, for all $\mathbf y = \{y_i\}_{i=1}^r \in \mathbf C_\omega^\perp$ (hence $y_i \in M_i^\perp$), we have $\langle x, A_\omega(\mathbf y)\rangle =0$. In particular, by choosing $j \in \{1,\ldots,r\}$ and setting $y_i:=0$ for all $i\neq j$, we obtain that $\langle x, y_j\rangle = 0$ for all $y_j \in M_j^\perp$. This, when combined with the fact that $M_j$ is a closed linear subspace, implies that $x \in M_j^{\perp \perp} = M_j$; see, for example, \cite[Theorem 4.5 (8)]{Deutsch2001}. The arbitrariness of $j\in\{1,\ldots,r\}$ yields that $x \in M$.

We now return to \eqref{prop:Minkowski:M}. Recall that, when $L$ is a linear subspace of $\mathcal H$ which is not necessarily closed, then $L^\perp = (\overline L)^\perp$; see \cite[Theorem 4.5 (2)]{Deutsch2001}. Consequently, by \eqref{pr:Minkowski:M},
\begin{equation}\label{}
  M^\perp = \big(A_\omega (\mathbf C_\omega^\perp)\big)^{\perp \perp}
  = \big(\overline{A_\omega (\mathbf C_\omega^\perp)}\big)^{\perp \perp}
  = \overline{A_\omega (\mathbf C_\omega^\perp)},
\end{equation}
which completes the proof.
\end{proof}

Note that similarly to Example \ref{ex:HwAreDifferent}, when $r = \infty$, the sets $A_\omega(\mathbf C_\omega^\perp)$ may be different for different $\omega \in \Omega_\infty$.

\begin{example}[$r=\infty$]\label{ex:AwCwAreDifferent}
  Assume that $\mathcal H$ is separable and let $\{e_i\}_{i=1}^\infty$ be a norm-one Schauder basis of it. Let $M_i^\perp:=\operatorname{span}\{e_i\}$.  Using the notation of Example \ref{ex:HwAreDifferent},  consider two spaces, $\mathbf H_{\omega,\beta}$ and $\mathbf H_{\omega,\beta'}$, with
  \begin{equation}\label{ex:AwCwAreDifferent:alhpaBeta}
    \varepsilon >0,\quad \alpha >0, \quad \beta:=1+\alpha+\varepsilon \quad \text{and} \quad
    \alpha':=\alpha+2\varepsilon, \quad \beta':= 1+\alpha'.
  \end{equation}
  By using Example \ref{ex:HwAreDifferent}, we see that
  \begin{equation}\label{ex:AwCwAreDifferent:xx}
    \mathbf x_\alpha :=\{i^{\alpha/2}e_i\}_{i=1}^\infty \in \mathbf H_{\omega,\beta}
    \quad \text{and} \quad
    \mathbf x_{\alpha'} :=\{i^{\alpha'/2}e_i\}_{i=1}^\infty \notin \mathbf H_{\omega,\beta'}.
  \end{equation}
  Consequently, $y := \frac{s_{\beta}}{s_{\beta'}} A_{\omega,\beta}(\mathbf x_\alpha) \in A_{\omega,\beta}(\mathbf C_{\omega,\beta}^\perp)$. Note that, by the choice of the $e_i$'s, the representation of
  \begin{equation}\label{}
    y = \frac{1}{s_{\beta'}} \sum_{i=1}^{\infty} \frac{1}{i^{\beta'}} \left( i^{[\alpha+2(\beta'-\beta)]/2} e_i\right)
    = \frac{1}{s_{\beta'}} \sum_{i=1}^{\infty} \frac{1}{i^{\beta'}} \left( i^{\alpha'/2} e_i\right)
  \end{equation}
  is unique. Hence, by \eqref{ex:AwCwAreDifferent:xx}, we see that $y \notin A_{\omega,\beta'}(\mathbf C_{\omega,\beta'}^\perp)$.
\end{example}

\begin{remark}[Notation]\label{rem:notation}
To emphasize that we refer to a particular vector $\omega = \{1/r,\ldots,1/r\} \in \Omega_r$ for some $r \in \mathbb Z_+$, we  may replace  the subscript ``$\omega$'' by the subscript ``$r$'' in all the above-mentioned definitions. For example, we write
\begin{equation}
\mathbf H_r,\ \langle \cdot, \cdot \rangle_r,\ \|\cdot\|_r, \quad
\mathbf C_r,\ \mathbf D_r,\ \mathbf M_r,\ \mathbf \Delta_r, \quad
\cos_r(\cdot, \cdot),\ c_r(\cdot, \cdot), \quad
T_r \text{ and } A_r
\end{equation}
instead of
\begin{equation}
\mathbf H_\omega,\ \langle \cdot, \cdot \rangle_\omega,\ \|\cdot\|_\omega, \quad
\mathbf C_\omega,\ \mathbf D_\omega,\ \mathbf M_\omega,\ \mathbf \Delta_\omega, \quad
\cos_\omega(\cdot, \cdot),\ c_\omega(\cdot, \cdot), \quad
T_\omega \text{ and } A_\omega,
\end{equation}
respectively. This coincides with the notation used in the introduction.
\end{remark}


\section{Alternating Projection Formalization of Pierra} \label{sec:Pierra}
In the next two results we bring out the connections among the operators $A_\omega$, $P_M$ and $T_\omega$, and the projections $P_{\mathbf C_\omega}$, $P_{\mathbf D_\omega}$ and $P_{\mathbf C_\omega \cap \mathbf D_\omega}$.

\begin{lemma}\label{lem:ProjCD}
  Let $\omega \in \Omega_r$. For each $\mathbf x := \{x_i\}_{i=1}^r \in \mathbf H_\omega$, we have
  \begin{equation}\label{lem:ProjCD:C}
    P_{\mathbf C_{\omega}}(\mathbf x) = \left\{P_{M_i}(x_i) \right\}_{i=1}^r,
  \end{equation}
  \begin{equation}\label{lem:ProjCD:D}
    P_{\mathbf D_\omega}(\mathbf x) = \left\{A_\omega(\mathbf x) \right\}_{i=1}^r
  \end{equation}
  and
  \begin{equation}\label{lem:ProjCD:CD}
    P_{\mathbf C_{\omega} \cap \mathbf D_\omega}(\mathbf x) = \left\{ P_{M}(A_\omega(\mathbf x)) \right\}_{i=1}^r.
  \end{equation}
\end{lemma}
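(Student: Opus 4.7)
The plan is to verify each of the three formulas by the standard characterization of orthogonal projection: for a closed linear subspace $\mathbf L$ of a Hilbert space, the point $\mathbf p$ equals $P_{\mathbf L}(\mathbf x)$ if and only if $\mathbf p \in \mathbf L$ and $\mathbf x - \mathbf p \in \mathbf L^{\perp}$. So for each formula I first confirm that the candidate lies in the target subspace, then check orthogonality of the residual.

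For \eqref{lem:ProjCD:C}, each coordinate $P_{M_i}(x_i)$ lies in $M_i$ by construction. When $r = \infty$, I must also check that the candidate is in $\mathbf H_\omega = \ell^2_\omega(\mathcal H)$; this follows from the nonexpansivity $\|P_{M_i}(x_i)\| \leq \|x_i\|$, which yields $\sum_{i=1}^\infty \omega_i \|P_{M_i}(x_i)\|^2 \leq \|\mathbf x\|_\omega^2 < \infty$. For orthogonality, given any $\mathbf y = \{y_i\}_{i=1}^r \in \mathbf C_\omega$, compute
\begin{equation*}
\langle \mathbf x - \{P_{M_i}(x_i)\}_{i=1}^r, \mathbf y \rangle_\omega = \sum_{i=1}^r \omega_i \langle x_i - P_{M_i}(x_i), y_i \rangle = 0,
\end{equation*}
since $x_i - P_{M_i}(x_i) \in M_i^\perp$ while $y_i \in M_i$.

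For \eqref{lem:ProjCD:D}, the candidate $\{A_\omega(\mathbf x)\}_{i=1}^r$ trivially lies in $\mathbf D_\omega$. The cleanest way to verify orthogonality is to invoke Proposition \ref{prop:Minkowski}, which states that $\mathbf D_\omega^\perp = \mathcal N(A_\omega)$. Then a direct computation using linearity of $A_\omega$ and $\sum_{i=1}^r \omega_i = 1$ gives $A_\omega(\mathbf x - \{A_\omega(\mathbf x)\}_{i=1}^r) = A_\omega(\mathbf x) - A_\omega(\mathbf x) = 0$, as required.

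For \eqref{lem:ProjCD:CD}, the shortest route is to use the nesting $\mathbf C_\omega \cap \mathbf D_\omega \subset \mathbf D_\omega$, which implies $P_{\mathbf C_\omega \cap \mathbf D_\omega} = P_{\mathbf C_\omega \cap \mathbf D_\omega} \circ P_{\mathbf D_\omega}$; combined with \eqref{lem:ProjCD:D}, this reduces the problem to projecting the constant sequence $\{A_\omega(\mathbf x)\}_{i=1}^r$ onto $\mathbf C_\omega \cap \mathbf D_\omega$. By equation \eqref{pr:inclusion:CD} of Proposition \ref{prop:inclusion}, this latter subspace is $\{\{x\}_{i=1}^r : x \in M\}$, and the map $z \mapsto \{z\}_{i=1}^r$ is an isometry from $\mathcal H$ onto $\mathbf D_\omega$ because $\sum_{i=1}^r \omega_i = 1$; hence the projection in question is precisely $\{P_M(A_\omega(\mathbf x))\}_{i=1}^r$. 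Alternatively, one verifies directly that the residual pairs to zero against every $\{m\}_{i=1}^r$ with $m \in M$ via $\langle A_\omega(\mathbf x) - P_M(A_\omega(\mathbf x)), m \rangle = 0$.

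The entire proof is essentially routine; the only point requiring a moment of care is the summability check in \eqref{lem:ProjCD:C} when $r = \infty$, which is handled by the uniform bound $\|P_{M_i}\| = 1$.
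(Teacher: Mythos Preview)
Your proof is correct and follows essentially the same route as the paper: both use the variational characterization of the orthogonal projection and verify membership plus orthogonality of the residual, with the same summability check for \eqref{lem:ProjCD:C}. The only cosmetic differences are that for \eqref{lem:ProjCD:D} you cite $\mathbf D_\omega^\perp = \mathcal N(A_\omega)$ from Proposition~\ref{prop:Minkowski} whereas the paper computes $\langle \mathbf x - \mathbf y, \mathbf z\rangle_\omega = 0$ directly, and for \eqref{lem:ProjCD:CD} your primary argument factors through $P_{\mathbf D_\omega}$ and the isometry $z\mapsto\{z\}_{i=1}^r$ while the paper again verifies the orthogonality condition by a one-line computation (which you also mention as an alternative).
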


\begin{proof}
  Recall that for a closed and  linear subspace $L$ of $\mathcal H$, we have
  \begin{equation}\label{pr:ProjCD:PL1}
    y = P_{L}( x)
    \quad \Longleftrightarrow \quad
     y \in  L \quad\text{and}\quad \langle  x -  y,  z\rangle = 0 \quad \forall z\in L;
  \end{equation}
  see, for example, \cite[Theorem 4.9]{Deutsch2001}. Analogously, for a closed and linear subspace $\mathbf L$ of $\mathbf H_\omega$, we have
  \begin{equation}\label{pr:ProjCD:PL2}
    \mathbf y = P_{\mathbf L}(\mathbf x)
    \quad \Longleftrightarrow \quad
    \mathbf y \in \mathbf L \quad\text{and}\quad \langle \mathbf x - \mathbf y, \mathbf z\rangle_\omega = 0 \quad \forall \mathbf z\in \mathbf L.
  \end{equation}
  We now consider each asserted equality separately.

  By definition, the point $\mathbf y:= \{P_{M_i}(x_i)\}_{i=1}^r$ satisfies $P_{M_i}(x_i) \in M_i$, $i = 1,\ldots,r$. Moreover, using the equality $\|P_{M_i}\|=1$, we see that
  \begin{equation}
  \sum_{i=1}^{r}\omega_i \|P_{M_i}(x_i)\|^2
  \leq \sum_{i=1}^{r}\omega_i \|x_i\|^2
  = \|\mathbf x\|^2_\omega <\infty.
  \end{equation}
  This shows that $\mathbf y \in \mathbf C_\omega$. Furthermore, by \eqref{pr:ProjCD:PL1} applied to $L := M_i$, $i=1,\ldots,r$, for each $\mathbf z = \{z_i\}_{i=1}^r \in \mathbf C_\omega$, we have
  \begin{equation}\label{}
    \langle \mathbf x - \mathbf y, \mathbf z\rangle_{\omega}
    = \sum_{i=1}^{r}\omega_i \langle  x_i -  P_{M_i}(x_i),  z_i\rangle = 0.
  \end{equation}
  By \eqref{pr:ProjCD:PL2}, this shows \eqref{lem:ProjCD:C}.

  By definition, $\mathbf y:= \{A_\omega(\mathbf x)\}_{i=1}^r \in \mathbf D_\omega$. Moreover, by \eqref{def:Aweak}, for each $\mathbf z = \{z\}_{i=1}^r \in \mathbf D_\omega$, we have
  \begin{equation}\label{}
    \langle \mathbf x - \mathbf y, \mathbf z\rangle_{\omega}
    = \sum_{i=1}^{r}\omega_i \langle  x_i - A_\omega(\mathbf x),  z\rangle
    = \left\langle \sum_{i=1}^{r}\omega_i( x_i - A_\omega(\mathbf x)),  z \right\rangle
    = \langle  A_\omega(\mathbf x) - A_\omega(\mathbf x),  z\rangle = 0.
  \end{equation}
  Again, by \eqref{pr:ProjCD:PL2}, this proves \eqref{lem:ProjCD:D}.

  Finally, let now $\mathbf y:= \{P_{M}(A_\omega(\mathbf x))\}_{i=1}^r$. It is clear that, by definition, $\mathbf y \in \mathbf C_\omega \cap \mathbf D_\omega = \{\mathbf x = \{x\}_{i=1}^r \colon x \in M\}.$ By \eqref{def:Aweak} and \eqref{pr:ProjCD:PL1}, for any $\mathbf z = \{z\}_{i=1}^r \in \mathbf C_\omega \cap \mathbf D_\omega$, we have
  \begin{align}\label{} \nonumber
    \langle \mathbf x - \mathbf y, \mathbf z\rangle_{\omega}
    & = \sum_{i=1}^{r} \omega_i \langle  x_i - P_{M}(A_\omega(\mathbf x)),  z\rangle
    =  \left\langle  \sum_{i=1}^{r} \omega_i \Big(x_i - P_{M}(A_\omega(\mathbf x)) \Big),  z \right\rangle \\
    & = \langle   A_\omega(\mathbf x)  - P_{M}(A_\omega(\mathbf x)),  z\rangle = 0.
  \end{align}
  This, in view of \eqref{pr:ProjCD:PL2}, proves the last equality.
\end{proof}

\begin{theorem}\label{thm:normConvergence}
  Let $\omega \in \Omega_r$. For each $x \in \mathcal H$ and $\mathbf x := \{x\}_{i=1}^r$, we have
  \begin{equation}\label{thm:normConvergence:eq}
    \|T_\omega^k(x) - P_{M}(x)\|
    =\|(P_{\mathbf D_\omega} P_{\mathbf C_\omega})^k(\mathbf x) - P_{\mathbf C_\omega \cap \mathbf D_\omega}(\mathbf x)\|_\omega
    \to 0 \text{\quad as } k \to \infty.
  \end{equation}
\end{theorem}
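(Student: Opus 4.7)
The plan is to reduce the statement to von Neumann's classical two-subspace alternating projection theorem applied in the Hilbert space $\mathbf H_\omega$, using Lemma \ref{lem:ProjCD} as the bridge between the simultaneous projection in $\mathcal H$ and the alternating projection between $\mathbf C_\omega$ and $\mathbf D_\omega$ in $\mathbf H_\omega$.

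First I would establish the key invariance: if $\mathbf x = \{x\}_{i=1}^r$ is a diagonal element, then so is $P_{\mathbf D_\omega} P_{\mathbf C_\omega}(\mathbf x)$, and in fact it equals $\{T_\omega(x)\}_{i=1}^r$. Indeed, by \eqref{lem:ProjCD:C} we have $P_{\mathbf C_\omega}(\mathbf x) = \{P_{M_i}(x)\}_{i=1}^r$, and then by \eqref{lem:ProjCD:D} together with the definition \eqref{def:T} of $T_\omega$,
\begin{equation}
P_{\mathbf D_\omega} P_{\mathbf C_\omega}(\mathbf x) = \bigl\{A_\omega\bigl(\{P_{M_i}(x)\}_{i=1}^r\bigr)\bigr\}_{i=1}^r = \{T_\omega(x)\}_{i=1}^r.
\end{equation}
A trivial induction on $k$ therefore yields $(P_{\mathbf D_\omega} P_{\mathbf C_\omega})^k(\mathbf x) = \{T_\omega^k(x)\}_{i=1}^r$ for every $k\geq 1$.

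Next I would identify the target. Since $A_\omega(\mathbf x) = \sum_{i=1}^r \omega_i x = x$, formula \eqref{lem:ProjCD:CD} gives
\begin{equation}
P_{\mathbf C_\omega \cap \mathbf D_\omega}(\mathbf x) = \{P_M(x)\}_{i=1}^r.
\end{equation}
Combining the two displays and using the definition of $\|\cdot\|_\omega$ in \eqref{def:Hw} together with $\sum_{i=1}^r \omega_i = 1$, I obtain
\begin{equation}
\|(P_{\mathbf D_\omega} P_{\mathbf C_\omega})^k(\mathbf x) - P_{\mathbf C_\omega \cap \mathbf D_\omega}(\mathbf x)\|_\omega^2
= \sum_{i=1}^r \omega_i \|T_\omega^k(x) - P_M(x)\|^2
= \|T_\omega^k(x) - P_M(x)\|^2,
\end{equation}
which establishes the asserted equality in \eqref{thm:normConvergence:eq}.

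Finally, the convergence to zero follows from von Neumann's alternating projection theorem applied to the two closed linear subspaces $\mathbf C_\omega$ and $\mathbf D_\omega$ of the Hilbert space $\mathbf H_\omega$ (Theorem \ref{int:th:norm} for $r=2$), namely $(P_{\mathbf D_\omega} P_{\mathbf C_\omega})^k(\mathbf x) \to P_{\mathbf C_\omega \cap \mathbf D_\omega}(\mathbf x)$ in $\mathbf H_\omega$. There is no real obstacle here; the only point that deserves a brief mention is that Theorem \ref{int:th:norm} applies in any real Hilbert space, in particular in the (possibly infinite-dimensional weighted $\ell^2$) space $\mathbf H_\omega$, so no separate argument is required for the case $r=\infty$. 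The content of the theorem is therefore essentially the observation that Pierra's product-space trick, together with Lemma \ref{lem:ProjCD}, extends verbatim from finitely many subspaces to countably many.
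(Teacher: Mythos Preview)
Your proof is correct and follows essentially the same approach as the paper's own proof: both use Lemma~\ref{lem:ProjCD} and induction to establish $(P_{\mathbf D_\omega}P_{\mathbf C_\omega})^k(\mathbf x) = \{T_\omega^k(x)\}_{i=1}^r$, combine this with \eqref{lem:ProjCD:CD} to obtain the equality in \eqref{thm:normConvergence:eq}, and then invoke von Neumann's two-subspace theorem (Theorem~\ref{int:th:norm}) in $\mathbf H_\omega$ for the convergence. Your write-up is slightly more explicit in spelling out the norm computation, but the argument is the same.
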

\begin{proof}
  Using \eqref{lem:ProjCD:C}, \eqref{lem:ProjCD:D} and induction with respect to $k$, we see that the equality
  \begin{equation}\label{}
    (P_{\mathbf D_\omega}P_{\mathbf C_\omega})^k(\mathbf x) = \{T_\omega^k(x)\}_{i=1}^r
  \end{equation}
  holds for all $x \in \mathcal H$ and $\mathbf x:=\{x\}_{i=1}^r \in \mathbf H_\omega$. This, when combined with \eqref{lem:ProjCD:CD}, leads to
  \begin{equation}\label{}
    \|T_\omega^k(x)-P_M(x)\|
    = \|\{T_\omega^k(x)\}_{i=1}^r - \{P_M(x)\}_{i=1}^r\|_\omega
    = \|(P_{\mathbf D_\omega} P_{\mathbf C_\omega})^k(\mathbf x) - P_{\mathbf C_\omega \cap \mathbf D_\omega}(\mathbf x)\|_\omega,
  \end{equation}
  which proves the equality in \eqref{thm:normConvergence:eq}.  The norm convergence of the alternating projection method follows from Theorem \ref{int:th:norm} applied to $M_1:=\mathbf C_\omega$ and $M_2:=\mathbf D_\omega$ in $\mathbf H_\omega$.
\end{proof}

\begin{theorem}[Exact Norm Value]\label{thm:norm}
  Let $\omega \in \Omega_r$. For each $k=1,2,\ldots,$ we have
  \begin{align}\label{thm:norm:eq}
    \|T_\omega^k - P_{M}\|
    = \|(P_{\mathbf D_{\omega}}P_{\mathbf C_{\omega}} P_{\mathbf D_{\omega}})^k - P_{\mathbf C_{\omega} \cap \mathbf D_{\omega}}\|_\omega
    = \cos_{\omega}(\mathbf C_{\omega}, \mathbf D_{\omega})^{2k}
     \leq 1.
  \end{align}
\end{theorem}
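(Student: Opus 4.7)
The plan is to center the argument on the symmetrized operator $S_\omega := P_{\mathbf D_\omega} P_{\mathbf C_\omega} P_{\mathbf D_\omega}$, which is positive and self-adjoint on $\mathbf H_\omega$ because
\begin{equation*}
\langle S_\omega \mathbf x,\mathbf x\rangle_\omega = \|P_{\mathbf C_\omega} P_{\mathbf D_\omega}(\mathbf x)\|_\omega^2 \geq 0.
\end{equation*}
Both equalities in \eqref{thm:norm:eq} will then be established through $S_\omega$: the first using Pierra's formalization (Theorem~\ref{thm:normConvergence}), and the second using the spectral/functional calculus for positive self-adjoint operators.

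For the first equality, I would combine $P_{\mathbf D_\omega}^2 = P_{\mathbf D_\omega}$ with $P_{\mathbf C_\omega \cap \mathbf D_\omega} P_{\mathbf D_\omega} = P_{\mathbf C_\omega \cap \mathbf D_\omega}$ (which holds because $\mathbf C_\omega \cap \mathbf D_\omega \subseteq \mathbf D_\omega$) to obtain
\begin{equation*}
S_\omega^k - P_{\mathbf C_\omega \cap \mathbf D_\omega}
= \bigl((P_{\mathbf D_\omega} P_{\mathbf C_\omega})^k - P_{\mathbf C_\omega \cap \mathbf D_\omega}\bigr) P_{\mathbf D_\omega}.
\end{equation*}
Since $P_{\mathbf D_\omega}$ maps $\mathbf B_\omega$ onto $\mathbf D_\omega \cap \mathbf B_\omega$, the operator norm of the right-hand side equals the supremum of $\|(P_{\mathbf D_\omega} P_{\mathbf C_\omega})^k (\mathbf y) - P_{\mathbf C_\omega \cap \mathbf D_\omega}(\mathbf y)\|_\omega$ over $\mathbf y \in \mathbf D_\omega$ with $\|\mathbf y\|_\omega \leq 1$. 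Parametrizing such $\mathbf y$ as $\{x\}_{i=1}^r$ with $\|\mathbf y\|_\omega = \|x\|$, Theorem~\ref{thm:normConvergence} identifies this with $\|T_\omega^k - P_M\|$.

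For the second equality, I would note that $\mathbf C_\omega \cap \mathbf D_\omega$ is pointwise fixed by $S_\omega$ and therefore $S_\omega$-invariant; by self-adjointness the orthogonal complement is invariant as well. On the former subspace $S_\omega^k - P_{\mathbf C_\omega \cap \mathbf D_\omega}$ vanishes, while on the latter $P_{\mathbf C_\omega \cap \mathbf D_\omega} = 0$ and $S_\omega$ restricts to a positive self-adjoint operator, so
\begin{equation*}
\|S_\omega^k - P_{\mathbf C_\omega \cap \mathbf D_\omega}\|_\omega
= \|S_\omega|_{(\mathbf C_\omega \cap \mathbf D_\omega)^\perp}\|_\omega^k,
\end{equation*}
using the power law $\|R^k\| = \|R\|^k$ valid for any positive self-adjoint $R$. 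To compute the base, $\langle S_\omega \mathbf x, \mathbf x\rangle_\omega = \|P_{\mathbf C_\omega} P_{\mathbf D_\omega}(\mathbf x)\|_\omega^2$ reduces the task to proving $\|P_{\mathbf C_\omega} P_{\mathbf D_\omega}|_{(\mathbf C_\omega \cap \mathbf D_\omega)^\perp}\|_\omega = \cos_\omega(\mathbf C_\omega, \mathbf D_\omega)$: the upper bound follows from applying the definition of the cosine to $\mathbf u := P_{\mathbf D_\omega} \mathbf x \in \mathbf D_\omega \cap (\mathbf C_\omega \cap \mathbf D_\omega)^\perp$ and $\mathbf v := P_{\mathbf C_\omega} P_{\mathbf D_\omega} \mathbf x \in \mathbf C_\omega \cap (\mathbf C_\omega \cap \mathbf D_\omega)^\perp$ via the identity $\|\mathbf v\|_\omega^2 = \langle \mathbf v, \mathbf u\rangle_\omega$; the lower bound comes from testing on $\mathbf x \in \mathbf D_\omega \cap (\mathbf C_\omega \cap \mathbf D_\omega)^\perp$, where $P_{\mathbf D_\omega} \mathbf x = \mathbf x$ and the cosine supremum is attained in the limit. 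The remaining bound $\leq 1$ is immediate from $\cos_\omega \in [0,1]$.

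The main obstacle is not conceptual but technical: one has to verify carefully that both $P_{\mathbf D_\omega}$ and $P_{\mathbf C_\omega}$ preserve $(\mathbf C_\omega \cap \mathbf D_\omega)^\perp$ and that the $S_\omega$-invariant orthogonal decomposition of $\mathbf H_\omega$ remains valid in the weighted $\ell^2$-product space when $r = \infty$; once these structural facts are in place, the spectral identities and the reduction to $\mathbf D_\omega$ proceed automatically.
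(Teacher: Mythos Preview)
Your proposal is correct and follows essentially the same route as the paper: both reduce the first equality to Pierra's identification of $T_\omega$ with alternating projections restricted to $\mathbf D_\omega$ (using $P_{\mathbf D_\omega}(\mathbf B_\omega)=\mathbf D_\omega\cap\mathbf B_\omega$), and both obtain the second via the power law $\|R^k\|=\|R\|^k$ for positive self-adjoint $R$ (the paper cites \cite[Lemma~6]{ReichZalas2017}) together with the identification of the base $\|S_\omega - P_{\mathbf C_\omega\cap\mathbf D_\omega}\|_\omega$ with $\cos_\omega(\mathbf C_\omega,\mathbf D_\omega)^2$. The only stylistic difference is that the paper packages this last step via the $AA^*$ identity and a citation of the Aronszajn--Kayalar--Weinert equality (Theorem~\ref{int:th:APM}), whereas you derive it directly from the definition of the Friedrichs angle; the commutation facts you flag as technical obstacles are precisely those recorded in \eqref{pr:norm:PCD_PC_PD}.
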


\begin{proof}
  The proof follows the argument in \cite[Theorem 7]{ReichZalas2017}. We give it here for the convenience of the reader.

   Recall that the operator $T_\omega$ is self-adjoint, $\|T_\omega\|\leq 1$  and $\fix T_\omega = M$ (compare with Section \ref{sec:preliminaries}). Moreover, for each $i=1,\ldots,r$, the projection $P_{M_i}$ commutes with $P_M$, that is,
  \begin{equation}\label{pr:norm:PM_PMi}
    P_{M} P_{M_i} = P_{M_i}P_{M} = P_{M};
  \end{equation}
  see \cite[Lemma 9.2]{Deutsch2001}. Consequently, the operator $T_\omega$ commutes with $P_M$ too and we have
  \begin{equation}\label{}
    P_M T_\omega = T_\omega P_M = P_M.
  \end{equation}
  By using  \cite[Lemma 6]{ReichZalas2017}, we get $\|T_\omega^k-P_M\| = \|T_\omega-P_M\|^k$.

  On the other hand, the operator $\mathbf T:=P_{\mathbf D_{\omega}}P_{\mathbf C_{\omega}} P_{\mathbf D_{\omega}}$ is also self-adjoint, $\|\mathbf T\|_\omega\leq 1$ and $\fix \mathbf T = \mathbf C_{\omega} \cap \mathbf D_{\omega}$. Note that similarly to \eqref{pr:norm:PM_PMi}, the projections $P_{\mathbf C_{\omega}}$ and $P_{\mathbf D_{\omega}}$ commute with $P_{\mathbf C_{\omega} \cap \mathbf D_{\omega}}$, where
  \begin{equation}\label{pr:norm:PCD_PC_PD}
    P_{\mathbf C_\omega \cap \mathbf D_\omega}
    = P_{\mathbf C_\omega \cap \mathbf D_\omega} P_{\mathbf C_\omega}
    = P_{\mathbf C_\omega} P_{\mathbf C_\omega \cap \mathbf D_\omega}
    = P_{\mathbf C_\omega \cap \mathbf D_\omega} P_{\mathbf D_\omega}
    = P_{\mathbf D_\omega} P_{\mathbf C_\omega \cap \mathbf D_\omega}.
  \end{equation}
  This leads to
  \begin{equation}\label{pr:norm:PM_T}
    P_{\mathbf C_{\omega} \cap \mathbf D_{\omega}} \mathbf T
    = \mathbf T P_{\mathbf C_{\omega} \cap \mathbf D_{\omega}}
    = P_{\mathbf C_{\omega} \cap \mathbf D_{\omega}}.
  \end{equation}
   Again, by using \cite[Lemma 6]{ReichZalas2017},  but this time in $\mathbf H_\omega$,  we obtain $\|\mathbf T^k - P_{\mathbf C_{\omega} \cap \mathbf D_{\omega}}\|_\omega = \|\mathbf T - P_{\mathbf C_{\omega} \cap \mathbf D_{\omega}}\|_\omega^k$.

  In order to complete the proof, it suffices to show the equalities of \eqref{thm:norm:eq} only for $k=1$. By the properties of the adjoint operation ``$*$'' and by Theorem \ref{int:th:APM}, we obtain
  \begin{align}\label{pr:norm:1}
  \nonumber
    \|P_{\mathbf D_\omega}P_{\mathbf C_\omega}P_{\mathbf D_\omega}-P_{\mathbf C_\omega\cap\mathbf D_\omega}\|_{ \omega}
    &=\|P_{\mathbf D_\omega}P_{\mathbf C_\omega}P_{\mathbf C_\omega}P_{\mathbf D_\omega}-P_{\mathbf C_\omega\cap\mathbf D_\omega}\|_{ \omega} \\
    \nonumber
    &=\|(P_{\mathbf D_\omega}P_{\mathbf C_\omega}-P_{\mathbf C_\omega\cap\mathbf D_\omega})
    (P_{\mathbf C_\omega}P_{\mathbf D_\omega}-P_{\mathbf C_\omega\cap\mathbf D_\omega})\|_{ \omega} \\
    \nonumber
    &=\|(P_{\mathbf D_\omega}P_{\mathbf C_\omega}-P_{\mathbf C_\omega\cap\mathbf D_\omega})
    (P_{\mathbf D_\omega}P_{\mathbf C_\omega}-P_{\mathbf C_\omega\cap\mathbf D_\omega})^*\|_{ \omega} \\
    \nonumber
    &=\|P_{\mathbf D_\omega}P_{\mathbf C_\omega}-P_{\mathbf C_\omega\cap\mathbf D_\omega}\|^2_{ \omega} \\
    &= \cos_\omega(\mathbf C_\omega, \mathbf D_\omega)^2.
  \end{align}
  Let $\mathbf B_\omega:= \{\mathbf x \colon \|\mathbf x\|_\omega \leq 1\}$. Since $P_{\mathbf D_\omega}(\mathbf B_\omega)= \mathbf D_\omega \cap \mathbf B_\omega$, we see that
  \begin{align}\label{pr:norm:2} \nonumber
  \|P_{\mathbf D_\omega}P_{\mathbf C_\omega}P_{\mathbf D_\omega}-P_{\mathbf C_\omega\cap\mathbf D_\omega}\|_{ \omega}
  & =\|P_{\mathbf D_\omega}P_{\mathbf C_\omega}P_{\mathbf D_\omega} -  P_{\mathbf C_\omega\cap\mathbf D_\omega}  P_{\mathbf D_\omega}\|_{ \omega} \\ \nonumber
  & =\sup\left\{ \|P_{\mathbf D_\omega}P_{\mathbf C_\omega}P_{\mathbf D_\omega}(\mathbf x)-P_{\mathbf C_\omega\cap\mathbf D_\omega}P_{\mathbf D_\omega}(\mathbf x)\|_{ \omega} \colon \mathbf x \in \mathbf B_\omega \right\} \\ \nonumber
  & = \sup\left\{ \|P_{\mathbf D_\omega}P_{\mathbf C_\omega}(\mathbf y)-P_{\mathbf C_\omega\cap\mathbf D_\omega}(\mathbf y)\|_{ \omega} \colon \mathbf y \in P_{\mathbf D_\omega}(\mathbf B_\omega) \right\}\\ \nonumber
  & = \sup\left\{ \|P_{\mathbf D_\omega}P_{\mathbf C_\omega}(\mathbf y)-P_{\mathbf C_\omega\cap\mathbf D_\omega}(\mathbf y)\|_{ \omega} \colon \mathbf y \in \mathbf D_\omega \cap \mathbf B_\omega \right\}\\ \nonumber
  & = \sup\left\{ \|T_\omega(y)-P_M(y)\| \colon y \in \mathcal H \text{ and } \|y\|\leq 1 \right\}\\
  & = \|T_\omega-P_M\|.
\end{align}
This completes the proof.
\end{proof}

Note that equality \eqref{thm:norm:eq} becomes useful only when  $\cos_\omega(\mathbf C_\omega, \mathbf D_\omega) <1$ in which case it turns into the optimal error bound. We return to this inequality in Theorems \ref{thm:cosCDequal1} and \ref{thm:equiv} below.

\section[Properties of the Cosine]{Properties of the Cosine $\cos_\omega(\mathbf C_\omega, \mathbf D_\omega)$ \label{sec:cosCD}}

In the next two lemmata, we show that the set $(\mathbf C_\omega \cap \mathbf D_\omega)^\perp$ can be replaced by its subset $\mathbf M_\omega^\perp$ in the definitions of $\cos_\omega(\mathbf C_\omega, \mathbf D_\omega)$ and $c_\omega(\mathbf C_\omega, \mathbf D_\omega)$, despite the discussion concerning the inclusion of Proposition \ref{prop:inclusion}.

\begin{lemma} \label{lem:cosCD1}
Let $\omega \in \Omega_r$.  The cosine of the Friedrichs angle between $\mathbf C_\omega$ and $\mathbf D_\omega$ satisfies:
\begin{align} \nonumber \label{lem:cosCD1:sup}
  \cos_\omega(\mathbf C_\omega, \mathbf D_\omega)
  & = \sup \left\{ \langle \mathbf x, \mathbf y \rangle_\omega \colon
  \begin{array}{l}
    \mathbf x \in \mathbf C_\omega \cap \mathbf M_\omega^\perp \cap \mathbf B_\omega,\\
    \mathbf y \in \mathbf D_\omega \cap \mathbf M_\omega^\perp \cap \mathbf B_\omega
  \end{array}
  \right\}\\
  & = \sup
  \left\{
    \sum_{i=1}^{r} \omega_i \langle x_i, y\rangle \colon
    \begin{array}{l}
      x_i \in M_i \cap M^\perp,\ i=1,\ldots,r,\ \sum_{i=1}^{r}\omega_i\| x_i\|^2 \leq 1,\\
      y \in M^\perp,\ \|y\| \leq 1
    \end{array}
  \right\}.
\end{align}
\end{lemma}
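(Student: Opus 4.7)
The plan is to reduce the supremum defining $\cos_\omega(\mathbf C_\omega, \mathbf D_\omega)$ from the perp of the small intersection $(\mathbf C_\omega\cap\mathbf D_\omega)^\perp$ to the perp of the larger product subspace $\mathbf M_\omega^\perp$, and then to unfold that smaller set in coordinates. From Proposition \ref{prop:inclusion} we already have the equality $\mathbf D_\omega\cap\mathbf M_\omega^\perp=\mathbf D_\omega\cap(\mathbf C_\omega\cap\mathbf D_\omega)^\perp$ and the (possibly strict) inclusion $\mathbf C_\omega\cap\mathbf M_\omega^\perp\subset\mathbf C_\omega\cap(\mathbf C_\omega\cap\mathbf D_\omega)^\perp$, so the ``$\geq$'' direction of the first equality in \eqref{lem:cosCD1:sup} is immediate.

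For the opposite inequality, the key idea is to show that when one takes the inner product of a point $\mathbf x\in\mathbf C_\omega\cap(\mathbf C_\omega\cap\mathbf D_\omega)^\perp\cap\mathbf B_\omega$ with a point $\mathbf y\in\mathbf D_\omega\cap\mathbf M_\omega^\perp\cap\mathbf B_\omega$, the component of $\mathbf x$ outside $\mathbf M_\omega^\perp$ contributes nothing. Concretely, I would replace $\mathbf x$ by $\mathbf x':=P_{\mathbf M_\omega^\perp}(\mathbf x)$. By the first identity of Proposition \ref{prop:inclusion}, $\mathbf M_\omega^\perp=\mathbf H_\omega\cap\prod_{i=1}^r M^\perp$, so this projection acts componentwise as $x_i\mapsto x_i-P_M(x_i)$. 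Since $x_i\in M_i$ and $M\subset M_i$ gives $P_M(x_i)\in M_i$, we obtain $\mathbf x'\in\mathbf C_\omega$; by construction $\mathbf x'\in\mathbf M_\omega^\perp$ and $\|\mathbf x'\|_\omega\leq\|\mathbf x\|_\omega\leq 1$, so $\mathbf x'$ lies in the smaller admissible set. Finally, since $\mathbf y\in\mathbf M_\omega^\perp$ and $P_{\mathbf M_\omega^\perp}$ is self-adjoint,
\[
\langle\mathbf x,\mathbf y\rangle_\omega=\langle\mathbf x,P_{\mathbf M_\omega^\perp}(\mathbf y)\rangle_\omega=\langle P_{\mathbf M_\omega^\perp}(\mathbf x),\mathbf y\rangle_\omega=\langle\mathbf x',\mathbf y\rangle_\omega,
\]
which yields the ``$\leq$'' direction and hence the first equality in \eqref{lem:cosCD1:sup}.

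The second equality is then merely a restatement in coordinates. A point $\mathbf x=\{x_i\}_{i=1}^r$ belongs to $\mathbf C_\omega\cap\mathbf M_\omega^\perp\cap\mathbf B_\omega$ iff $x_i\in M_i\cap M^\perp$ for every $i$ and $\sum_{i=1}^r\omega_i\|x_i\|^2\leq 1$; a point $\mathbf y=\{y\}_{i=1}^r\in\mathbf D_\omega$ belongs to $\mathbf M_\omega^\perp\cap\mathbf B_\omega$ iff $y\in M^\perp$ and $\|y\|\leq 1$ (using $\sum_{i=1}^r\omega_i=1$). For such $\mathbf x$ and $\mathbf y$ the weighted inner product unfolds as $\langle\mathbf x,\mathbf y\rangle_\omega=\sum_{i=1}^r\omega_i\langle x_i,y\rangle$, which is exactly the right-hand side of \eqref{lem:cosCD1:sup}.

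The only delicate step is the verification that the componentwise projection keeps $\mathbf x'$ inside $\mathbf C_\omega$, which uses the nesting $M\subset M_i$; everything else is essentially bookkeeping based on Proposition \ref{prop:inclusion} and the explicit structure of $\mathbf C_\omega$, $\mathbf D_\omega$ and $\mathbf M_\omega^\perp$.
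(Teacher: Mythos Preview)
Your proof is correct and follows essentially the same approach as the paper's: both arguments hinge on the observation that for $x_i\in M_i$ one has $P_{M^\perp}(x_i)=x_i-P_M(x_i)\in M_i\cap M^\perp$ (because $M\subset M_i$) together with $\langle x_i,y\rangle=\langle P_{M^\perp}(x_i),y\rangle$ whenever $y\in M^\perp$. The paper carries this out componentwise via the commutation identity $P_{M^\perp}P_{M_i}=P_{M_i}P_{M^\perp}=P_{M_i\cap M^\perp}$ and a chain of enlarged suprema, whereas you package the same replacement as the single product-space projection $\mathbf x\mapsto P_{\mathbf M_\omega^\perp}(\mathbf x)$ and invoke its self-adjointness; the substance is identical, your presentation is just more streamlined.
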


\begin{proof}
Denote the right-hand side of \eqref{lem:cosCD1:sup} by $\alpha$ and observe that, by the inclusion $\mathbf C_{\omega} \cap \mathbf M_{\omega}^\perp \subset \mathbf C_{\omega} \cap (\mathbf C_{\omega} \cap \mathbf D_{\omega})^\perp$, we have $\cos_\omega(\mathbf C_\omega, \mathbf D_\omega) \geq \alpha$. We now show that $\cos_\omega(\mathbf C_\omega, \mathbf D_\omega) \leq \alpha$.

Note first that, analogously to \eqref{pr:norm:PM_PMi}, for each $i=1,\ldots,r$, we have
\begin{equation}\label{pr:cosCD1:PM_PMi:perp}
  P_{M^\perp} P_{M_i} = P_{M_i}P_{M^\perp} = P_{M_i\cap M^\perp}.
\end{equation}
Indeed, when $x \in M_i$, by \eqref{pr:norm:PM_PMi} and using the orthogonal decomposition theorem, we see that
\begin{equation}\label{}
  P_{M^\perp} (x) = P_{M^\perp} P_{M_i}(x) =  P_{M_i}(x) - P_{M} P_{M_i}(x) = P_{M_i}(x) - P_{M}(x) \in M_i,
\end{equation}
that is, $P_{M^\perp}(M_i)\subset M_i$. Hence we may again apply \cite[Lemma 9.2]{Deutsch2001} to obtain \eqref{pr:cosCD1:PM_PMi:perp}.

Let $\mathbf x = \{x_i\}_{i=1}^r \in \mathbf H_\omega$ be such that $x_i \in M_i$, $i=1,\ldots,r$, and let $y\in M^\perp$. Using \eqref{pr:cosCD1:PM_PMi:perp}, we arrive at
\begin{equation}\label{pr:cosCD1:reduction}
  \langle x_i, y\rangle = \langle P_{M}(x_i)+P_{M^\perp}(x_i), y\rangle
  = \langle P_{M^\perp}P_{M_i}(x_i), y\rangle
  = \langle P_{M_i \cap M^\perp}(x_i), y\rangle.
\end{equation}
Furthermore, by \eqref{pr:norm:PM_PMi} and \eqref{pr:cosCD1:PM_PMi:perp}, we obtain
\begin{equation}\label{}
  \|x_i\|^2 = \|P_{M_i\cap M}(x_i)\|^2
  + \|P_{M_i\cap M^\perp}(x_i)\|^2.
\end{equation}
Therefore,
\begin{align}\label{pr:cosCD1:cosCD}\nonumber
  \cos_\omega(\mathbf C_\omega, \mathbf D_\omega)
  & = \sup\left\{
    \sum_{i=1}^r \omega_i \langle x_i, y\rangle \colon
    \begin{array}{l}
      x_i \in M_i,\ i=1,\ldots,r,\ \sum_{i=1}^{r}\omega_i x_i \in M^\perp,\\
      \sum_{i=1}^{r}\omega_i\| x_i\|^2 \leq 1,\ y \in M^\perp,\ \|y\| \leq 1
    \end{array}
    \right\}\\ \nonumber
  & \leq \sup\left\{
    \sum_{i=1}^r \omega_i \langle x_i, y\rangle \colon
    \begin{array}{l}
      x_i \in M_i,\ i=1,\ldots,r,\ \sum_{i=1}^{r}\omega_i\| x_i\|^2 \leq 1,\\
      y \in M^\perp,\ \|y\| \leq 1
    \end{array}
    \right\}\\ \nonumber
  & \leq \sup\left\{
    \sum_{i=1}^r \omega_i \langle x_i, y\rangle \colon
    \begin{array}{l}
      x_i \in M_i,\ i=1,\ldots,r,\ \sum_{i=1}^r \omega_i \|P_{M_i\cap M^\perp}(x_i)\|^2\leq 1,\\
      y \in M^\perp,\ \|y\| \leq 1
    \end{array}
    \right\}\\ \nonumber
  & = \sup\left\{
    \sum_{i=1}^r \omega_i \langle z_i, y\rangle \colon
    \begin{array}{l}
      z_i \in M_i\cap M^\perp,\ i=1,\ldots,r,\  \sum_{i=1}^{r}\omega_i\| z_i\|^2 \leq 1,\\
      y \in M^\perp,\ \|y\| \leq 1
    \end{array}
    \right\}\\
  & = \sup \left\{ \langle \mathbf z, \mathbf y \rangle_\omega \colon
  \begin{array}{l}
    \mathbf z \in \mathbf C_\omega \cap \mathbf M_\omega^\perp,\ \|\mathbf z\|_\omega\leq 1,\\
    \mathbf y \in \mathbf D_\omega \cap \mathbf M_\omega^\perp,\ \|\mathbf y\|_\omega\leq 1
  \end{array}
  \right\}
  = \alpha,
\end{align}
where in the first two inequalities, we take the supremum over a larger set. In the fourth line the equality holds since for every $\mathbf x = \{x_i\}_{i=0}^r \in \mathbf H_\omega$ such that $x_i\in M_i$ and $\sum_{i=1}^r \omega_i \|P_{M_i\cap M^\perp}(x_i)\|^2\leq 1$, there is at least one $\mathbf z = \{z_i\}_{i=0}^r \in \mathbf H_\omega$ with $z_i\in M_i\cap M^\perp$ and $\|\mathbf z\|_\omega^2\leq 1$ for which the equality $\sum_{i=1}^r \omega_i \langle x_i, y\rangle = \sum_{i=1}^r \omega_i \langle z_i, y\rangle$ holds for all $y\in M^\perp$. For example, by \eqref{pr:cosCD1:reduction}, one can take $z_i:=P_{M_i\cap M^\perp}(x_i)$. This shows that $\cos_\omega(\mathbf C_\omega, \mathbf D_\omega) = \alpha$.
\end{proof}

\begin{lemma} \label{lem:cCD1}
Let $\omega \in \Omega_r$.  The configuration constant between $\mathbf C_\omega$ and $\mathbf D_\omega$ satisfies:
\begin{align} \nonumber \label{lem:cCD1:sup}
  c_\omega(\mathbf C_\omega, \mathbf D_\omega)
  & = \sup \left\{ \langle \mathbf x, \mathbf y \rangle_\omega \colon
  \begin{array}{l}
    \mathbf x \in \mathbf C_\omega \cap \mathbf M_\omega^\perp \cap \mathbf \Delta_\omega,\\
    \mathbf y \in \mathbf D_\omega \cap \mathbf M_\omega^\perp \cap \mathbf \Delta_\omega
  \end{array}
  \right\}\\
  & = \sup
  \left\{
    \sum_{i=1}^{r} \omega_i \langle x_i, y\rangle \colon
    \begin{array}{l}
      x_i \in M_i \cap M^\perp,\ \| x_i\| \leq 1,\ i=1,\ldots,r\\
      y \in M^\perp,\ \|y\| \leq 1
    \end{array}
  \right\}.
\end{align}
\end{lemma}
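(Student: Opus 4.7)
The plan is to mirror the proof of Lemma \ref{lem:cosCD1}, replacing the $\ell^2$-ball constraint $\sum_i \omega_i \|x_i\|^2 \le 1$ by the pointwise constraints $\|x_i\| \le 1$ coming from $\mathbf{\Delta}_\omega = \prod_{i=1}^r B$. The second equality in \eqref{lem:cCD1:sup} is merely decoding definitions: by Proposition \ref{prop:inclusion}, $\mathbf{y} \in \mathbf{D}_\omega \cap \mathbf{M}_\omega^\perp$ has the form $\{y\}_{i=1}^r$ with $y \in M^\perp$, and $\mathbf{y} \in \mathbf{\Delta}_\omega$ reduces to $\|y\| \le 1$; while $\mathbf{x} \in \mathbf{C}_\omega \cap \mathbf{M}_\omega^\perp \cap \mathbf{\Delta}_\omega$ means $x_i \in M_i \cap M^\perp$ with $\|x_i\| \le 1$ for every $i$.

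For the first equality, denote the right-hand side by $\alpha$. The inequality $c_\omega(\mathbf{C}_\omega,\mathbf{D}_\omega) \ge \alpha$ follows from the inclusion $\mathbf{C}_\omega \cap \mathbf{M}_\omega^\perp \subset \mathbf{C}_\omega \cap (\mathbf{C}_\omega \cap \mathbf{D}_\omega)^\perp$ and the equality $\mathbf{D}_\omega \cap \mathbf{M}_\omega^\perp = \mathbf{D}_\omega \cap (\mathbf{C}_\omega \cap \mathbf{D}_\omega)^\perp$, both supplied by Proposition \ref{prop:inclusion}.

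For the reverse inequality, take arbitrary $\mathbf{x} = \{x_i\}_{i=1}^r \in \mathbf{C}_\omega \cap (\mathbf{C}_\omega \cap \mathbf{D}_\omega)^\perp \cap \mathbf{\Delta}_\omega$ and $\mathbf{y} = \{y\}_{i=1}^r \in \mathbf{D}_\omega \cap (\mathbf{C}_\omega \cap \mathbf{D}_\omega)^\perp \cap \mathbf{\Delta}_\omega$, so $x_i \in M_i$ with $\|x_i\| \le 1$, $y \in M^\perp$ with $\|y\| \le 1$. Set $z_i := P_{M_i \cap M^\perp}(x_i)$. By the same computation as in \eqref{pr:cosCD1:reduction}, which rests on the commutation identity $P_{M^\perp} P_{M_i} = P_{M_i \cap M^\perp}$ proved in \eqref{pr:cosCD1:PM_PMi:perp}, one obtains $\langle x_i, y \rangle = \langle z_i, y \rangle$ for each $i$. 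Hence
\[
\langle \mathbf{x}, \mathbf{y} \rangle_\omega = \sum_{i=1}^{r} \omega_i \langle x_i, y \rangle = \sum_{i=1}^{r} \omega_i \langle z_i, y \rangle.
\]
Since $P_{M_i \cap M^\perp}$ is an orthogonal projection, $\|z_i\| \le \|x_i\| \le 1$, and of course $z_i \in M_i \cap M^\perp$, so the right-hand side is bounded by $\alpha$. Taking the supremum yields $c_\omega(\mathbf{C}_\omega,\mathbf{D}_\omega) \le \alpha$.

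There is no real obstacle here; the substance lies entirely in the pointwise contractivity $\|P_{M_i \cap M^\perp}(x_i)\| \le \|x_i\|$, which is exactly what allows the replacement $x_i \mapsto z_i$ to respect the box-shaped constraints $\mathbf{\Delta}_\omega$ (in contrast with Lemma \ref{lem:cosCD1}, where the same replacement had to respect the $\ell^2_\omega$-ball, via $\sum_i \omega_i \|z_i\|^2 \le \sum_i \omega_i \|x_i\|^2$). Everything else is bookkeeping that parallels the proof of Lemma \ref{lem:cosCD1} line for line.
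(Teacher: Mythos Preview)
Your proof is correct and matches the paper's approach exactly: the paper simply states that the argument is similar to that of Lemma~\ref{lem:cosCD1} with ``$\sup_{i=1,\ldots,r}\|\cdot\|\leq 1$'' in place of ``$\sum_{i=1}^{r}\omega_i\|\cdot\|^2\leq 1$'', leaving the details to the reader. You have supplied precisely those details, including the key observation that the replacement $x_i \mapsto z_i = P_{M_i\cap M^\perp}(x_i)$ respects the box constraint $\mathbf \Delta_\omega$ because $\|z_i\|\le\|x_i\|\le 1$.
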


\begin{proof}
  The argument is similar to the one presented in the proof of Lemma \ref{lem:cosCD1}, where one should write ``$\sup_{i=1,\ldots,r}\|\cdot\|\leq 1$'' instead of ``$\sum_{i=1}^{r}\omega_i\|\cdot\|^2\leq 1$'' in \eqref{pr:cosCD1:cosCD}. We leave the details to the reader.
\end{proof}

\begin{lemma} \label{lem:cosCD2}
Let $\omega \in \Omega_r$. The following estimates hold:
\begin{equation}\label{lem:cosCD2:ineq}
  \cos_\omega(\mathbf C_\omega, \mathbf D_\omega)^2
  \leq c_\omega(\mathbf C_\omega, \mathbf D_\omega)
  \leq \cos_\omega(\mathbf C_\omega, \mathbf D_\omega).
\end{equation}
\end{lemma}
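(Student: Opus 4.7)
The right-hand inequality $c_\omega(\mathbf C_\omega, \mathbf D_\omega) \leq \cos_\omega(\mathbf C_\omega, \mathbf D_\omega)$ is the easy half: I would simply observe that $\mathbf \Delta_\omega \subset \mathbf B_\omega$, since whenever $\|x_i\| \leq 1$ for every $i$, one has $\|\mathbf x\|_\omega^2 = \sum_i \omega_i \|x_i\|^2 \leq \sum_i \omega_i = 1$. Thus the supremum defining $c_\omega(\mathbf C_\omega, \mathbf D_\omega)$ is taken over a subset of the constraint region used to define $\cos_\omega(\mathbf C_\omega, \mathbf D_\omega)$, and the bound is immediate.

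For the left-hand inequality $\cos_\omega(\mathbf C_\omega, \mathbf D_\omega)^2 \leq c_\omega(\mathbf C_\omega, \mathbf D_\omega)$, my plan is to use Theorem \ref{thm:norm} at $k = 1$ to rewrite the quantity on the left as an operator norm in $\mathcal H$:
\begin{equation*}
\cos_\omega(\mathbf C_\omega, \mathbf D_\omega)^2 \;=\; \|T_\omega - P_M\|.
\end{equation*}
Recall from the proof of Theorem \ref{thm:norm} that $P_M T_\omega = T_\omega P_M = P_M$, so $T_\omega - P_M$ is a self-adjoint operator vanishing on $M$ and mapping $\mathcal H$ into $M^\perp$. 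The variational description of the operator norm, together with this invariance, therefore yields
\begin{equation*}
\|T_\omega - P_M\| \;=\; \sup \bigl\{\, \langle T_\omega(x), y\rangle \,:\, x, y \in M^\perp,\ \|x\| \leq 1,\ \|y\| \leq 1 \,\bigr\}.
\end{equation*}
For each $x \in M^\perp$, the identity $P_{M^\perp} P_{M_i} = P_{M_i} P_{M^\perp} = P_{M_i \cap M^\perp}$ established during the proof of Lemma \ref{lem:cosCD1} gives $P_{M_i}(x) = P_{M_i \cap M^\perp}(x)$. Setting $x_i := P_{M_i \cap M^\perp}(x)$, we have $x_i \in M_i \cap M^\perp$ and $\|x_i\| \leq \|x\| \leq 1$, and so
\begin{equation*}
\langle T_\omega(x), y\rangle \;=\; \sum_{i=1}^{r} \omega_i \langle x_i, y\rangle \;\leq\; c_\omega(\mathbf C_\omega, \mathbf D_\omega),
\end{equation*}
by the alternative characterization of $c_\omega$ provided by Lemma \ref{lem:cCD1}.

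The conceptual point --- rather than a genuine obstacle --- is to recognize that $\cos_\omega^2$, when re-expressed via Theorem \ref{thm:norm} as an operator norm in $\mathcal H$, requires only one unit vector $x$ on the ``$\mathbf C_\omega$-side'' instead of a whole sequence $\mathbf x = \{x_i\}$ in the ball $\mathbf B_\omega$; the individual coordinates $x_i = P_{M_i \cap M^\perp}(x)$ then automatically satisfy the box constraint $\|x_i\| \leq 1$, which is precisely what is allowed in the supremum defining $c_\omega$ via Lemma \ref{lem:cCD1}. Everything else amounts to assembling these pieces.
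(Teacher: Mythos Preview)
Your proposal is correct and follows essentially the same route as the paper: both invoke Theorem \ref{thm:norm} to rewrite $\cos_\omega(\mathbf C_\omega,\mathbf D_\omega)^2$ as $\|T_\omega-P_M\|$, restrict the supremum to $M^\perp$, use the identity $P_{M_i}(x)=P_{M_i\cap M^\perp}(x)$ for $x\in M^\perp$ to produce coordinates $x_i$ satisfying the box constraint $\|x_i\|\le 1$, and then bound by $c_\omega(\mathbf C_\omega,\mathbf D_\omega)$ via Lemma \ref{lem:cCD1}. The only cosmetic difference is that the paper passes through the intermediate quantity $\sup\{\|\sum_i\omega_i x_i\|:\ldots\}$ and then applies the Riesz representation to introduce the second vector $y$, whereas you use the bilinear characterization of the operator norm from the outset; these are equivalent.
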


\begin{proof}

  By Theorem \ref{thm:norm}, we obtain
  \begin{equation}\label{}
    \cos_\omega(\mathbf C_\omega, \mathbf D_\omega)^2
    = \|T_\omega - P_M\|
    = \sup \left\{\left\|  \sum_{i=1}^{r}  \omega_i(P_{M_i}(x) - P_M(x)) \right\|
      \colon x\in M^\perp,\ \|x\|\leq 1\right\}.
  \end{equation}
  Note that, by \eqref{pr:cosCD1:PM_PMi:perp}, for all $x\in M^\perp$, we have
  \begin{equation}
    P_{M_i}(x) - P_M(x) = P_{M_i}(x) = P_{M_i}P_{M^\perp}(x) = P_{M_i\cap M^\perp}(x) \in M_i\cap M^\perp
  \end{equation}
  and $\|P_{M_i\cap M^\perp}(x)\| \leq \|x\| \leq 1$. Consequently,
  \begin{equation}\label{pr:cosCD2}
    \cos_\omega(\mathbf C_\omega, \mathbf D_\omega)^2
    \leq \sup \left\{\left\|\sum_{i=1}^{r}\omega_i x_i \right\|
    \colon x_i\in M_i \cap M^\perp,\ i=1,\ldots,r,\ \|x_i\|\leq 1\right\}.
  \end{equation}
  By the Riesz representation theorem, \eqref{def:Aweak}, the assumption that $x_i\in M_i \cap M^\perp$ and the fact that $\sum_{i=1}^r \omega_i x_i \in M^\perp$, we get
  \begin{equation}\label{}
    \left\|\sum_{i=1}^{r}\omega_i x_i \right\|
    = \sup \left\{ \sum_{i=1}^{r}\omega_i \langle x_i, y\rangle \colon y\in M^\perp, \|y\|\leq 1 \right\}
    \leq c_\omega(\mathbf C_\omega, \mathbf D_\omega).
  \end{equation}
  This, when combined with \eqref{pr:cosCD2}, proves the first inequality in \eqref{lem:cosCD2:ineq}. The second inequality in \eqref{lem:cosCD2:ineq} is trivial.
\end{proof}

\begin{remark}\label{rem:cosCD3}
  Let $\omega \in \Omega_r$ and assume that $J:=\{j \colon M_j \neq M\} \neq \emptyset$. Note that $M_j\cap M^\perp \neq \{0\}$ for all $j \in J$ and $M_j\cap M^\perp = \{0\}$ whenever $j\notin J$; compare with the proof of Proposition \ref{prop:inclusion}. It is not difficult to show that
  \begin{align}\label{rem:cosCD3:a1}
    \cos_{\omega}(\mathbf C_\omega, \mathbf D_\omega)
    & = \sup \left\{
      \sum_{j\in J} \omega_j \langle x_j, y\rangle \colon
      \begin{array}{l}
        x_j \in M_j \cap M^\perp,\  j \in J,\  \sum_{j\in J}\omega_j \| x_j\|^2 = 1,\\
        y \in M^\perp,\ \|y\| = 1
      \end{array}
    \right\} \\ \label{rem:cosCD3:a2}
    & = \sup \left\{ \left\|\sum_{j\in J}\omega_j x_j \right\|
      \colon x_j \in  M_j  \cap M^\perp,\   j \in J,\  \sum_{j\in J}\omega_j \|x_j\|^2 = 1\right\}\\ \label{rem:cosCD3:a3}
    &  = \sup \left\{
      \frac{\| \sum_{j\in J}  \omega_j x_j\|}{\sqrt{  \sum_{j\in J}  \omega_j \|x_j\|^{2}}}
      \colon x_j \in M_j\cap M^\perp,\  j \in J,\   0 \neq \sum_{j \in J} \omega_j \|x_j\|^2 < \infty \right\}
  \end{align}
  and
  \begin{align} \label{rem:cosCD3:b1}
    c_{\omega}(\mathbf C_\omega, \mathbf D_\omega)
    & = \sup \left\{
      \sum_{j\in J} \omega_j \langle x_j, y\rangle \colon
      \begin{array}{l}
        x_j \in M_j \cap M^\perp,\ \| x_j\| = 1,\  j \in J,  \\
        y \in M^\perp,\ \|y\| = 1
      \end{array}
    \right\} \\ \label{rem:cosCD3:b2}
    & = \sup \left \{ \left\|\sum_{j \in J}\omega_j x_j \right\| \colon x_j \in M_j\cap M^\perp,\  \|x_j\| = 1,\  j \in J  \right\}.
  \end{align}
\end{remark}

\begin{theorem}[Reduction to Unique Subspaces]\label{th:reduction}
Let $q\in \mathbb Z_+ \cup\{\infty\}$ be such that $q \leq r$, let $L_j$ be nontrivial, closed and linear  subspaces of  $\mathcal H$, $j=1,\ldots,q$, and let $L$ be their intersection. Moreover, let $\{I_j\}_{j=1}^q$ consist of nonempty, pairwise disjoint subsets of $\{1,\ldots,r\}$, possibly infinite, such that $\bigcup_{j=1}^q I_j = \{1,\ldots,r\}$, and assume that $M_i = L_j$ for all $i\in I_j$. Then,
\begin{equation}\label{th:reduction:eq}
  \cos_{\omega}(\mathbf C_\omega, \mathbf D_\omega) = \cos_{\lambda}(\mathbf E_\lambda, \mathbf D_\lambda)
  \quad \text{and} \quad
  c_{\omega}(\mathbf C_\omega, \mathbf D_\omega) = c_{\lambda}(\mathbf E_\lambda, \mathbf D_\lambda),
\end{equation}
where $\lambda := \{\lambda_j\}_{j=1}^q \in \Omega_q$ with $\lambda_j:= \sum_{i\in I_j} \omega_i$ and where (compare with \eqref{def:CinHw})
\begin{equation}\label{}
  \mathbf E_{\lambda} :=
  \begin{cases}
    \prod_{j=1}^{q}L_j, & \text{if } q \in \mathbb Z_+  \\
    \{\{u_j\}_{j=1}^{\infty} \colon u_j \in L_j,\ j=1,2, \ldots,\ \sum_{j=1}^{\infty}\lambda_j\| u_j\|^2 < \infty\}, & \text{if } q = \infty.
  \end{cases}
\end{equation}
\end{theorem}

\begin{proof}
We demonstrate only the first equality in \eqref{th:reduction:eq} by using Lemma \ref{lem:cosCD1}. A similar argument, when combined with Lemma \ref{lem:cCD1}, can be used to establish the second equality in \eqref{th:reduction:eq}.

In order to show the inequality $\cos_{\omega}(\mathbf C_\omega, \mathbf D_\omega) \leq \cos_{\lambda}(\mathbf E_\lambda, \mathbf D_\lambda)$, for each pair of points
\begin{equation}\label{pr:reduction:xy}
  \mathbf x = \{x_i\}_{i=1}^r \in \mathbf C_\omega \cap \mathbf M_\omega^\perp \cap \mathbf B_\omega
  \quad \text{and} \quad
  \mathbf y = \{y\}_{i=1}^r \in \mathbf D_\omega \cap \mathbf M_\omega^\perp \cap \mathbf B_\omega
\end{equation}
in $\mathbf H_\omega$ we define another pair
\begin{equation}\label{pr:reduction:uv}
  \mathbf u = \{u_j\}_{j=1}^q \in \mathbf E_\lambda \cap \mathbf L_\lambda^\perp \cap \mathbf B_\lambda
  \quad \text{and} \quad
  \mathbf v = \{v\}_{j=1}^q \in \mathbf D_\lambda \cap \mathbf L_\lambda^\perp \cap \mathbf B_\lambda
\end{equation}
in $\mathbf H_\lambda$ which satisfies
\begin{equation}\label{pr:reduction:xyuv}
  \langle \mathbf x, \mathbf y \rangle_\omega = \langle \mathbf u, \mathbf v \rangle_\lambda,
\end{equation}
where analogously to \eqref{def:MinHw},
\begin{equation}\label{}
  \mathbf L_{\lambda} :=
  \begin{cases}
    \prod_{j=1}^{q}L, & \text{if } q \in \mathbb Z_+  \\
    \{\{u_j\}_{j=1}^{\infty} \colon u_j \in L,\ j=1,2, \ldots,\ \sum_{j=1}^{\infty}\lambda_j\| u_j\|^2 < \infty\}, & \text{if } q = \infty.
  \end{cases}
\end{equation}
To this end, for each $j\in \{1,\ldots,q\}$, define $u_j:=\frac 1 {\lambda_j}\sum_{i\in I_j}\omega_i x_i$. Notice that $u_j$ is well defined and $u_j \in L_j$. Moreover, we have $\sum_{i=1}^{r}\omega_i x_i = \sum_{j=1}^{q}\lambda_j u_j$, where for $r = \infty$ we use Lemma \ref{lem:rearrangement}. By the convexity of $\|\cdot\|^2$,
\begin{equation}\label{}
  \sum_{j=1}^{q}\lambda_j \|u_j\|^2
  = \sum_{j=1}^{q}\lambda_j \left\|\sum_{ i\in I_j } \frac{\omega_i}{\lambda_j}x_i \right\|^2
  \leq \sum_{j=1}^{q}\lambda_j \sum_{i\in I_j} \frac{\omega_i}{\lambda_j} \|x_i\|^2
  = \|\mathbf x\|_\omega^2 <\infty,
\end{equation}
that is, $\mathbf u \in \mathbf H_\lambda$. On the other hand, since $L=M$, we can define $v:=y$. It is not difficult to see that with the above defined $\mathbf u$ and $\mathbf v$, equality \eqref{pr:reduction:xyuv} holds.

In order to prove the opposite inequality $\cos_{\omega}(\mathbf C_\omega, \mathbf D_\omega) \geq \cos_{\lambda}(\mathbf E_\lambda, \mathbf D_\lambda)$, this time for each pair in \eqref{pr:reduction:uv} we define the corresponding pair in \eqref{pr:reduction:xy}, for which again equality \eqref{pr:reduction:xyuv} holds. It suffices to take $x_i:=u_j$ for all $i \in I_j$ and $y:=v$. Indeed, by assumption, $x_i\in M_i$. Moreover, $\sum_{i\in I_j}\omega_i x_i = \lambda_j u_j$, hence $\sum_{i=1}^{r}\omega_i x_i = \sum_{j=1}^{q}\lambda_j u_j$. Furthermore,
\begin{equation}\label{}
  \sum_{i=1}^{r} \omega_i \|x_i\|^2 = \sum_{j=1}^{q} \lambda_j \|u_j\|^2 = \|\mathbf u\|_\lambda^2 <\infty,
\end{equation}
which shows that $\mathbf x \in \mathbf H_\omega$. Clearly, equality \eqref{pr:reduction:xyuv} holds for the pair $(\mathbf x, \mathbf y)$ defined above. This completes the proof.
\end{proof}

\begin{theorem}[Approximation, $r=\infty$] \label{th:approximation}
  Let $\omega \in \Omega_\infty$. Then
  \begin{equation}\label{th:approximation:eq}
    \cos_{\omega}(\mathbf C_\omega, \mathbf D_\omega)
    = \lim_{q\to\infty} \cos_{\lambda_q}(\mathbf C_{\lambda_q}, \mathbf D_{\lambda_q})
    \quad\text{and}\quad
    c_{\omega}(\mathbf C_\omega, \mathbf D_\omega)
    = \lim_{q\to\infty} c_{\lambda_q}(\mathbf C_{\lambda_q}, \mathbf D_{\lambda_q}),
  \end{equation}
  where $\lambda_q := \left\{\frac{\omega_1}{s_q},\ldots,\frac{\omega_q}{s_q} \right\} \in \Omega_q$ with $s_q := \sum_{i=1}^{q} \omega_i$  and $q = 2,3,\ldots$.
\end{theorem}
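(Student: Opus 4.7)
The plan is to establish, for both cosines, matching two-sided inequalities; throughout I will work with the Rayleigh-quotient formulations provided by Lemma \ref{lem:cosCD1} and Remark \ref{rem:cosCD3} rather than with the abstract product-space definitions. Put $L_q := \bigcap_{i=1}^q M_i$, so that $L_q \supseteq M$ and $L_q \downarrow M$; a standard monotone argument yields the strong convergence $P_{L_q} \to P_M$, and hence $P_{L_q^\perp} \to P_{M^\perp}$, on $\mathcal H$. Throughout I will exploit the elementary inclusion $L_q^\perp \subseteq M^\perp$.

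For the upper bound, I would take a near-optimal admissible tuple $\{x_i\}_{i=1}^q$ (with accompanying $y$) for the truncated problem, so that $x_i \in M_i \cap L_q^\perp$ and $y \in L_q^\perp$. Because $L_q^\perp \subseteq M^\perp$, padding with zeros produces an admissible tuple for the infinite problem. Direct comparison of the weighted norms and inner products produces only a scaling factor involving $s_q$, giving $\cos_{\lambda_q}(\mathbf C_{\lambda_q}, \mathbf D_{\lambda_q}) \leq \cos_\omega(\mathbf C_\omega, \mathbf D_\omega)/\sqrt{s_q}$ and $c_{\lambda_q}(\mathbf C_{\lambda_q}, \mathbf D_{\lambda_q}) \leq c_\omega(\mathbf C_\omega, \mathbf D_\omega)/s_q$; since $s_q \to 1$, the upper bounds follow.

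The lower bound is the main obstacle, because the truncated admissible set $M_i \cap L_q^\perp$ is \emph{smaller} than $M_i \cap M^\perp$. Given a near-optimizer $\mathbf x = \{x_i\}_{i=1}^\infty$ for the infinite problem, I would project: define $\tilde x_i^{(q)} := P_{L_q^\perp}(x_i)$ for $i \leq q$. Since $L_q \subseteq M_i$ for every $i \leq q$, the commutation identity $P_{L_q^\perp} P_{M_i} = P_{M_i \cap L_q^\perp}$ (established exactly as in the proof of Lemma \ref{lem:cosCD1}) ensures $\tilde x_i^{(q)} \in M_i \cap L_q^\perp$. The crucial analytic step is then to verify that
\[
\frac{1}{s_q}\, P_{L_q^\perp}\!\left(\sum_{i=1}^q \omega_i x_i\right) \to A_\omega(\mathbf x), \qquad \sum_{i=1}^q \omega_i \|P_{L_q^\perp}(x_i)\|^2 \to \sum_{i=1}^\infty \omega_i \|x_i\|^2
\]
as $q \to \infty$. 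The first limit follows from $P_{L_q^\perp} \to P_{M^\perp}$ strongly together with $A_\omega(\mathbf x) \in M^\perp$, combined with the Cauchy--Schwarz tail estimate $\|\sum_{i=q+1}^\infty \omega_i x_i\| \leq \sqrt{(1-s_q)\sum_i \omega_i \|x_i\|^2}$; the second follows from pointwise strong convergence and a dominated-convergence argument with summable majorant $\omega_i \|x_i\|^2$. Plugging these into the Rayleigh quotient gives $\liminf_{q\to\infty} \cos_{\lambda_q}(\mathbf C_{\lambda_q}, \mathbf D_{\lambda_q}) \geq \|A_\omega(\mathbf x)\|/\sqrt{\sum_i \omega_i \|x_i\|^2}$; taking supremum over admissible $\mathbf x$ and invoking \eqref{rem:cosCD3:a3} yields the matching lower bound for $\cos_\omega$. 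The argument for $c_\omega$ is analogous, using \eqref{rem:cosCD3:b2} in place of \eqref{rem:cosCD3:a3} and the automatic bound $\|P_{L_q^\perp}(x_i)\| \leq \|x_i\| \leq 1$ to keep the rescaled tuple admissible.
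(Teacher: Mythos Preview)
Your proposal is correct and follows essentially the same approach as the paper: pad truncated tuples by zeros (using $L_q^\perp \subseteq M^\perp$) for the upper bound, and project a near-optimizer of the infinite problem onto $L_q^\perp$ via the commutation $P_{L_q^\perp}P_{M_i}=P_{M_i\cap L_q^\perp}$, together with the strong convergence $P_{L_q^\perp}\to P_{M^\perp}$, for the lower bound. The only technical difference is that the paper first truncates the near-optimizer at a fixed index $n$ and then projects (so only finitely many limits need tracking), whereas you project all $q$ coordinates and appeal to a dominated-convergence argument for the resulting triangular array; both organizations are valid.
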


\begin{proof}
  We only show the first equality in \eqref{th:approximation:eq}. A similar argument can be employed to prove the second equality.

  Note that for all $q=2,3,\ldots,$ we have $s_q < s_{q+1}<1$, $N_q^\perp \subset N_{q+1}^\perp \subset M^\perp,$ where $N_q:=\bigcap_{i=1}^q M_i$ and
  \begin{equation}\label{}
    \mathbf C_{\lambda_q} \cap \mathbf M_{\lambda_q}^\perp \cap \mathbf B_{\lambda_q}
    = \left\{
    \{x_i\}_{i=1}^{q} \colon
      \begin{array}{l}
        x_i \in M_i \cap N_q^\perp,\ i=1,\ldots,q,\\
        \sum_{i=1}^{q}\omega_i \|x_i\|^2 \leq s_{q}
      \end{array}
    \right\}.
  \end{equation}
  Consequently, if $\{x_1,\ldots,x_q\} \in \mathbf C_{\lambda_q} \cap \mathbf M_{\lambda_q}^\perp \cap \mathbf B_{\lambda_q}$, then  $\{x_1,\ldots,x_q, 0\} \in \mathbf C_{\lambda_{q+1}} \cap \mathbf M_{\lambda_{q+1}}^\perp \cap \mathbf B_{\lambda_{q+1}}$ and analogously $\{x_1,\ldots,x_q, 0, 0, \ldots\} \in \mathbf C_\omega \cap \mathbf M_\omega^\perp \cap \mathbf B_\omega$. Hence, by Lemma \ref{lem:cosCD1},
  \begin{equation}\label{}
    s_q \cos_{\lambda_q}(\mathbf C_{\lambda_q}, \mathbf D_{\lambda_q})
    \leq s_{q+1} \cos_{\lambda_{q+1}}(\mathbf C_{\lambda_{q+1}}, \mathbf D_{\lambda_{q+1}})
    \leq \cos_{\omega}(\mathbf C_\omega, \mathbf D_\omega)
  \end{equation}
  and thus the sequence $\{s_q \cos_{\lambda_q}(\mathbf C_{\lambda_q}, \mathbf D_{\lambda_q})\}_{q=2}^\infty$ is monotone and bounded, and therefore converges to some number $\alpha$. Moreover, $\alpha \leq \cos_{\omega}(\mathbf C_\omega, \mathbf D_\omega)$.

  In order to show the opposite inequality $\alpha \geq \cos_{\omega}(\mathbf C_\omega, \mathbf D_\omega)$, we first demonstrate that for each pair
  \begin{equation}\label{pr:approximation:xy}
    \mathbf x = \{x_i\}_{i=1}^\infty \in \mathbf C_\omega \cap \mathbf M_\omega^\perp \cap \mathbf B_\omega
    \quad\text{and}\quad
    \mathbf y = \{y\}_{i=1}^\infty \in \mathbf C_\omega \cap \mathbf M_\omega^\perp \cap \mathbf B_\omega
  \end{equation}
  in $\mathbf H_\omega$ and for each $\varepsilon >0$, we can find another pair
  \begin{equation}\label{pr:approximation:xqyq}
    \mathbf x_q = \{x_{q,i}\}_{i=1}^q \in \mathbf C_{\lambda_q} \cap \mathbf M_{\lambda_q}^\perp \cap \mathbf B_{\lambda_q}
    \quad\text{and}\quad
    \mathbf y_q = \{y_q\}_{i=1}^q \in \mathbf C_{\lambda_q} \cap \mathbf M_{\lambda_q}^\perp \cap \mathbf B_{\lambda_q}
  \end{equation}
  in $\mathbf H_{\lambda_q}$ such that
  \begin{equation}\label{pr:approximation:xyANDxqyq}
    \langle \mathbf x, \mathbf y\rangle_\omega
    \leq s_q \langle \mathbf x_q, \mathbf y_q\rangle_{\lambda_q} + \varepsilon.
  \end{equation}
  To this end, choose $n$ so that the tail satisfies $\sum_{i=n+1}^{\infty} \omega_i \langle x_i, y\rangle < \frac \varepsilon 2$. For each $q > n$, define
  \begin{equation}\label{}
    x_{q,i}:=
    \begin{cases}
      \sqrt{s_q} \cdot P_{M_i \cap N_q^\perp}(x_i), & \mbox{if } i=1,\ldots,n \\
      0, & \mbox{otherwise}
    \end{cases}
    \quad\text{and}\quad
    y_q := \sqrt{s_q} \cdot P_{N_q^\perp}(y).
  \end{equation}
  Note that $\mathbf x_q$ and $\mathbf y_q$ indeed satisfy \eqref{pr:approximation:xqyq} as $\|\mathbf x_q\|_{\lambda_q}\leq 1$ and $\|\mathbf y_q\| \leq 1$.

  On the other hand, the decreasing sequence of sets $\{N_q\}_{q=2}^\infty$ converges to $M$ in the sense of Mosco; see \cite[Definition 1.2 and Lemma 3.1]{Mosco1969}. This, when combined with \cite[Theorem 3.2]{Tsukada1984}, implies that for all $x \in \mathcal H$, we get $P_{N_q}(x) \to P_M(x)$ as $q\to \infty$ and further, $    P_{N_q^\perp} (x) = x - P_{N_q}(x) \to x - P_M(x) = P_{M^\perp}(x)$ as $q\to \infty$. In this connection, see also \cite[Proposition 7]{IsraelMaximianoReich1983} and \cite[Lemma 4.2]{BauschkeBorwein1996}.

  In particular, using the assumptions that $x_i \in M_i\cap M^\perp$, $y\in M^\perp$ and the equality $P_{M_i\cap N_q^\perp} = P_{N_q^\perp}P_{M_i}$ (compare with \eqref{pr:cosCD1:PM_PMi:perp}), we obtain
  \begin{equation}\label{}
     x_{q,i} \to P_{M^\perp}(x_i) = x_i
    \quad\text{and}\quad
    y_q \to P_{M^\perp}(y) = y
  \end{equation}
  as $q \to \infty$. Consequently, for all large enough $q$ and for all $i=1,\ldots,n$, we reach the inequality $\langle x_i, y\rangle \leq \langle x_{q,i}, y_q\rangle + \frac{\varepsilon}{2n}$, which leads to
  \begin{equation}\label{}
    \langle \mathbf x, \mathbf y\rangle_\omega
    \leq \sum_{i=1}^{n}\omega_i\langle x_i, y \rangle + \frac \varepsilon 2
    \leq \sum_{i=1}^{n}\omega_i\langle x_{q,i}, y_q \rangle + \varepsilon
    = s_q \langle \mathbf x_q, \mathbf y_q\rangle_{\lambda_q} + \varepsilon.
  \end{equation}
  This shows \eqref{pr:approximation:xyANDxqyq}, as claimed.

  We are now ready to return to the inequality $\alpha \geq \cos_{\omega}(\mathbf C_\omega, \mathbf D_\omega)$. Indeed, by \eqref{pr:approximation:xyANDxqyq}, and by the monotonicity of the sequence $\{s_q \cos_{\lambda_q}(\mathbf C_{\lambda_q}, \mathbf D_{\lambda_q})\}_{q=2}^\infty$, we have
  \begin{equation}\label{}
    \langle \mathbf x, \mathbf y\rangle_\omega
    \leq s_q \langle \mathbf x_q, \mathbf y_q\rangle_{\lambda_q} + \varepsilon
    \leq s_q \cos_{\lambda_q}(\mathbf C_{\lambda_q}, \mathbf D_{\lambda_q}) + \varepsilon
    \leq \alpha + \varepsilon.
  \end{equation}
  By taking the supremum over all $\mathbf x$ and $\mathbf y$ satisfying \eqref{pr:approximation:xy}, we obtain that $\cos_{\omega}(\mathbf C_\omega, \mathbf D_\omega) \leq \alpha + \varepsilon$, which proves the asserted inequality and hence completes the proof.
\end{proof}

\begin{proposition}\label{prop:orthogonal}
  Let $\omega \in \Omega_r$ and assume that the subspaces $M_1,\ldots,M_r$ are nontrivial and pairwise orthogonal. Then
  \begin{equation}\label{prop:orthogonal:eq}
    c_{\omega}(\mathbf C_\omega, \mathbf D_\omega)
    = \sqrt{\sum_{i=1}^{r}\omega_i^2}
    \qquad \text{and} \qquad
    \cos_{\omega}(\mathbf C_\omega, \mathbf D_\omega)
    = \sqrt{\sup_{i=1,\ldots,r}\omega_i}.
  \end{equation}
\end{proposition}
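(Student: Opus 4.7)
The plan is as follows. First, I would reduce to a tractable formula for both quantities. Pairwise orthogonality of the nontrivial subspaces $M_1,\ldots,M_r$ (with $r\geq 2$) immediately forces $M=\bigcap_{i=1}^r M_i = \{0\}$: any $x\in M_i\cap M_j$ with $i\neq j$ satisfies $\langle x,x\rangle=0$ by orthogonality. Hence $M^\perp = \mathcal H$, and $M_j \cap M^\perp = M_j$ is nontrivial for every $j$, so the index set $J$ of Remark~\ref{rem:cosCD3} is all of $\{1,\ldots,r\}$. Applying formulas \eqref{rem:cosCD3:a2} and \eqref{rem:cosCD3:b2} of that remark then yields
\begin{equation}
  \cos_\omega(\mathbf C_\omega,\mathbf D_\omega) = \sup\left\{\left\|\sum_{i=1}^r \omega_i x_i\right\| \colon x_i\in M_i,\ \sum_{i=1}^r \omega_i\|x_i\|^2 = 1\right\}
\end{equation}
and
\begin{equation}
  c_\omega(\mathbf C_\omega,\mathbf D_\omega) = \sup\left\{\left\|\sum_{i=1}^r \omega_i x_i\right\| \colon x_i\in M_i,\ \|x_i\| = 1,\ i=1,\ldots,r\right\}.
\end{equation}

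Second, I would exploit pairwise orthogonality to evaluate these norms. Since any choice of $x_i\in M_i$ produces vectors $\{\omega_i x_i\}_{i=1}^r$ which are pairwise orthogonal in $\mathcal H$, the Pythagoras identity (in the limit sense for $r=\infty$, using that $\sum \omega_i\|x_i\|^2<\infty$ and Proposition~\ref{prop:abs} to guarantee convergence of the series) gives
\begin{equation}
  \left\|\sum_{i=1}^r \omega_i x_i\right\|^2 = \sum_{i=1}^r \omega_i^2\|x_i\|^2.
\end{equation}

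Third, I would read off both values. For $c_\omega$, setting $\|x_i\|=1$ for all $i$ is admissible (the $M_i$ are nontrivial) and produces $\sum_i \omega_i^2$, which is also an obvious upper bound; hence $c_\omega(\mathbf C_\omega,\mathbf D_\omega) = \sqrt{\sum_{i=1}^r \omega_i^2}$. For $\cos_\omega$, I would reparametrize by $t_i := \omega_i\|x_i\|^2 \geq 0$, turning the constraint into $\sum_i t_i = 1$ and the squared objective into $\sum_i \omega_i^2\|x_i\|^2 = \sum_i \omega_i t_i$. This is a linear functional over the (possibly infinite-dimensional) simplex, whose supremum is $\sup_{i}\omega_i$; it is attained (when $r$ is finite or the supremum is attained) by concentrating mass at the largest index $i^\ast$, i.e.\ $\|x_{i^\ast}\|^2 = 1/\omega_{i^\ast}$ and $x_i=0$ otherwise, and otherwise approached along a sequence. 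Thus $\cos_\omega(\mathbf C_\omega,\mathbf D_\omega)^2 = \sup_{i}\omega_i$, completing both identities.

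There is no serious obstacle; the only technical care is the $r=\infty$ case, which is handled uniformly by Proposition~\ref{prop:abs} (absolute convergence of the averaged series) together with the fact that the constraint $\sum \omega_i\|x_i\|^2<\infty$ automatically ensures the orthogonal Pythagorean identity passes to the limit.
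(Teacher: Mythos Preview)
Your proposal is correct and follows essentially the same route as the paper: both arguments identify $M=\{0\}$ (equivalently $J=\{1,\ldots,r\}$), invoke the formulas of Remark~\ref{rem:cosCD3}, use pairwise orthogonality to obtain $\|\sum_i \omega_i x_i\|^2=\sum_i \omega_i^2\|x_i\|^2$, and then optimize. One small remark: your caveat about the supremum $\sup_i \omega_i$ possibly not being attained when $r=\infty$ is unnecessary---since $\omega_i>0$ and $\sum_i \omega_i=1$ force $\omega_i\to 0$, the supremum is always achieved at some index $j$, as the paper notes, so the extremizer $\|x_j\|^2=1/\omega_j$, $x_i=0$ for $i\neq j$ is always available.
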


\begin{proof}
  Observe  first that we must have $J:=\{j\colon M_j \neq M\} = \{1,\ldots,r\}$. Otherwise, there would be a pair $i$ and $j$ for which $M_i = M \subset M_j$ and $M_i \perp M_j$, which is possible only when $M_i=\{0\}$, a contradiction. Moreover, $\sup_i \omega_i = \omega_j$ for some $j \in \{1,\ldots,r\}$, even when $r = \infty$. By the assumed pairwise orthogonality, for all $\{x_i\}_{i=1}^r$ with $x_i\in M_i\cap M^\perp$ and $\sum_{i=1}^{r}\omega_i \|x_i\|^2 = 1$, we have
  \begin{equation}\label{pr:orthogonal:eq}
    \left\|\sum_{i=1}^{r}\omega_i x_i \right\|^2
    = \sum_{i=1}^{r}\omega_i^2 \|x_i\|^2
    \leq \omega_j \sum_{i=1}^{r}\omega_i \|x_i\|^2 = \omega_j.
  \end{equation}
  Thus, by \eqref{rem:cosCD3:a2}, $\cos_{\omega}(\mathbf C_\omega, \mathbf D_\omega) \leq \omega_j$. On the other hand, when  $\|x_j\| = \frac 1 {\sqrt{\omega_j}}$,  the assumption $\sum_{i=1}^{\infty}\omega_i \|x_i\|^2 = 1$ implies that $\|x_i\| = 0$ for all $i\neq j$. Hence, in this case, $\|\sum_{i=1}^{r}\omega_i x_i\|^2 = \omega_j$ and therefore $\cos_{\omega}(\mathbf C_\omega, \mathbf D_\omega) = \omega_j$. In view of \eqref{rem:cosCD3:b2} and \eqref{pr:orthogonal:eq}, it is not difficult to see that $c_{\omega}(\mathbf C_\omega, \mathbf D_\omega) = \sqrt{\sum_{i=1}^{r}\omega_i^2}$.
\end{proof}

In the next example, we show that the equality or inequality between $c_{\omega}(\mathbf C_\omega, \mathbf D_\omega)$ and $\cos_{\omega}(\mathbf C_\omega, \mathbf D_\omega)$ may depend on the weights $\omega \in \Omega_r$.

\begin{example}\label{ex:orthogonal}
Let $q\in \mathbb Z_+$ be such that $q\leq r$ and let $L_1,\ldots,L_q$ be a tuple of nontrivial, closed and linear subspaces of $\mathcal H$, which are pairwise orthogonal. Assume that the list $M_1, \ldots, M_r $ consists only of subspaces from $L_1,\ldots, L_q$ and let $\lambda \in \Omega_q$ be defined as in Theorem \ref{th:reduction}. Then, by Theorem \ref{th:reduction} and Proposition \ref{prop:orthogonal}, we have $c_{\omega}(\mathbf C_\omega, \mathbf D_\omega)
= \sqrt{\sum_{j=1}^{q}\lambda_j^2}$ and $\cos_{\omega}(\mathbf C_\omega, \mathbf D_\omega) = \sqrt{\sup_{j=1,\ldots,q}\lambda_j}$. We consider two cases:
\begin{enumerate}[(a)]
  \item If $\lambda_j = 1/q$ for all $j=\{1,\ldots,q\}$, then $c_{\omega}(\mathbf C_\omega, \mathbf D_\omega) = \cos_{\omega}(\mathbf C_\omega, \mathbf D_\omega) = \sqrt{1/q}$.
  \item If $\lambda_i:=\max_{j=1,\ldots,q}\lambda_j > 1/q$, then $\sum_{j=1}^{q}\lambda_j^2 < \lambda_i \sum_{j=1}^{q}\lambda_j = \lambda_i$ and consequently, $c_{\omega}(\mathbf C_\omega, \mathbf D_\omega) < \cos_{\omega}(\mathbf C_\omega, \mathbf D_\omega)$.
\end{enumerate}
\end{example}

In spite of the previous example, the ``parallel'' alignment of the subspaces $M_i$ does not depend on $\omega$, as we show in our next theorem.

\begin{theorem} \label{thm:cosCDequal1}
The following conditions are equivalent:
\begin{enumerate}[(i)]
  \item There is $\lambda = \{\lambda_i\}_{i=1}^r \in \Omega_r$ such that $\cos_\lambda(\mathbf C_\lambda, \mathbf D_\lambda) = 1$.
  \item For all $\omega = \{\omega_i\}_{i=1}^r \in \Omega_r$, we have $\cos_\omega(\mathbf C_\omega, \mathbf D_\omega) = 1$.
\end{enumerate}
\end{theorem}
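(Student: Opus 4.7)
The direction $(ii)\Rightarrow(i)$ is trivial (take $\lambda$ to be any specific $\omega$), so the plan focuses on $(i)\Rightarrow(ii)$. The strategy is to derive an explicit spectral formula for $\cos_\omega(\mathbf{C}_\omega,\mathbf{D}_\omega)^2$ and then transfer the failure condition from $\lambda$ to an arbitrary $\omega$ by Lebesgue's dominated convergence theorem.

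For the formula, I will combine Theorem \ref{thm:norm} at $k=1$ with the elementary decomposition $P_{M_i}=P_M+P_{M_i\cap M^\perp}$, which is valid because $M\subseteq M_i$. This yields
\[
T_\omega-P_M=\sum_{i=1}^{r}\omega_i\,P_{M_i\cap M^\perp},
\]
a nonnegative self-adjoint operator whose range lies in $M^\perp$. Consequently $\cos_\omega(\mathbf{C}_\omega,\mathbf{D}_\omega)^2=\|T_\omega-P_M\|$ equals the supremum of the associated quadratic form over the unit sphere of $M^\perp$. For $y\in M^\perp$ with $\|y\|=1$, the identity $y-P_{M_i\cap M^\perp}y=y-P_{M_i}y=P_{M_i^\perp}y$ gives $\|P_{M_i\cap M^\perp}y\|^2+\|P_{M_i^\perp}y\|^2=1$, and the formula rearranges to
\[
\cos_\omega(\mathbf{C}_\omega,\mathbf{D}_\omega)^2=1-\inf\Bigl\{\,\sum_{i=1}^{r}\omega_i\|P_{M_i^\perp}y\|^2 : y\in M^\perp,\ \|y\|=1\Bigr\}.
\]

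Now assume $(i)$, so the above infimum vanishes for $\lambda$: pick $y_n\in M^\perp$ with $\|y_n\|=1$ and $\sum_i\lambda_i\|P_{M_i^\perp}y_n\|^2\to 0$. For each fixed $i$, the bound $\lambda_i\|P_{M_i^\perp}y_n\|^2\le\sum_j\lambda_j\|P_{M_j^\perp}y_n\|^2\to 0$ together with $\lambda_i>0$ forces $\|P_{M_i^\perp}y_n\|^2\to 0$ as $n\to\infty$. For an arbitrary $\omega\in\Omega_r$, the dominated envelope $\omega_i\|P_{M_i^\perp}y_n\|^2\le\omega_i$ (since $\|P_{M_i^\perp}y_n\|\le 1$) and the pointwise-in-$i$ convergence allow Lebesgue's dominated convergence theorem with respect to counting measure to interchange limit and sum, so $\sum_i\omega_i\|P_{M_i^\perp}y_n\|^2\to 0$. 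Hence the infimum vanishes for $\omega$ as well, yielding $\cos_\omega(\mathbf{C}_\omega,\mathbf{D}_\omega)=1$, which is $(ii)$.

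The main technical step is the derivation of the explicit formula for $\cos_\omega^2$; after that the dominated convergence argument handles both the finite and countably infinite cases uniformly. This approach deliberately avoids Proposition \ref{prop:Minkowski} and the genuinely weight-dependent sets $A_\omega(\mathbf{C}_\omega^\perp)$ illustrated in Example \ref{ex:AwCwAreDifferent}, working instead with the scalar quantities $\|P_{M_i^\perp}y\|$, which depend only on $y$ and on the subspaces $M_i$.
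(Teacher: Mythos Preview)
Your argument is correct and genuinely different from the paper's. The paper never writes down the spectral formula
\[
\cos_\omega(\mathbf C_\omega,\mathbf D_\omega)^2
= 1-\inf\Bigl\{\sum_{i=1}^{r}\omega_i\|P_{M_i^\perp}y\|^2 : y\in M^\perp,\ \|y\|=1\Bigr\};
\]
instead it passes through the auxiliary configuration constant $c_\lambda(\mathbf C_\lambda,\mathbf D_\lambda)$ (via Lemma~\ref{lem:cosCD2}), extracts sequences $x_{k,i}\in M_i\cap M^\perp$ and $y_k\in M^\perp$ with $\sum_i\lambda_i\langle x_{k,i},y_k\rangle\to 1$, argues by contradiction that $\langle x_{k,i},y_k\rangle\to 1$ for each fixed $i$, and then uses a tail--truncation estimate to conclude $\sum_i\omega_i\langle x_{k,i},y_k\rangle\to 1$. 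Your approach and the paper's share the same skeleton---pointwise-in-$i$ convergence forced by $\lambda$, then a uniform estimate to pass to $\omega$---but your route is shorter: the formula above makes the relevant scalar quantities $\|P_{M_i^\perp}y_n\|^2$ appear directly, so there is no need for the configuration constant, no need to verify $\{j:M_j\neq M\}=\{1,\ldots,r\}$, and no sign-flipping. The dominated convergence theorem (with dominating constant $1\in\ell^1(\omega)$) replaces the paper's explicit $\varepsilon$--$n$ truncation. What the paper's argument buys in exchange is that it stays closer to the supremum definitions \eqref{lem:cosCD1:sup}--\eqref{lem:cCD1:sup} and does not invoke Theorem~\ref{thm:norm}; your proof leans on that earlier norm identity together with the decomposition $P_{M_i}=P_M+P_{M_i\cap M^\perp}$, both of which are available at this point in the paper, so there is no circularity.
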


\begin{proof}
  Assume that $\cos_\lambda(\mathbf C_\lambda, \mathbf D_\lambda) = 1$. It suffices to show that there is at least one sequence of pairs $\{\mathbf x_k, \mathbf y_k\}_{k=1}^\infty$, which for all $\omega \in \Omega_r$ satisfies
  \begin{equation}\label{pr:cosCDequal1:toShowA}
    \mathbf x_k = \{x_{k,i}\}_{i=1}^r \in \mathbf C_\omega \cap \mathbf M_\omega^\perp \cap \mathbf B_\omega, \qquad
    \mathbf y_k = \{y_k\}_{i=1}^r \in \mathbf D_\omega \cap \mathbf M_\omega^\perp \cap \mathbf B_\omega
  \end{equation}
  and
  \begin{equation}\label{pr:cosCDequal1:toShowB}
    \lim_{k\to \infty} \langle \mathbf x_k, \mathbf y_k \rangle_\omega =1.
  \end{equation}

  By Lemma \ref{lem:cosCD2}, $c_\lambda(\mathbf C_\lambda, \mathbf D_\lambda) = 1$. Observe that we must have $J:=\{j\colon M_j \neq M\} = \{1,\ldots,r\}$. Otherwise, by \eqref{rem:cosCD3:b1} and by the Cauchy-Schwarz inequality (applied to each summand), we would arrive at the following contradiction:
  $ 1 = c_\lambda(\mathbf C_\lambda, \mathbf D_\lambda) \leq \sum_{j\in J} \lambda_j <1 $. Consequently, by the definition of the supremum in \eqref{rem:cosCD3:b1}, for each $k=1,2,\ldots$, there are
  \begin{equation}\label{pr:cosCDequal1:xk}
    \mathbf x_k = \{x_{k,i}\}_{i=1}^r, \qquad x_{k,i}\in M_i\cap M^\perp, \qquad \|x_{k,i}\| = 1
  \end{equation}
  and
  \begin{equation}\label{pr:cosCDequal1:yk}
    \mathbf y_k = \{y_k\}_{i=1}^r, \qquad y_k\in M^\perp, \qquad \|y_k\| = 1,
  \end{equation}
  which satisfy
  \begin{equation}\label{pr:cosCDequal1:supLambda}
    1-\frac 1 k \leq \langle \mathbf x_k, \mathbf y_k \rangle_\lambda
    = \sum_{i=1}^{r}\lambda_i \langle x_{k,i}, y_k \rangle \leq 1.
  \end{equation}
  Without any loss to the generality we may assume that $\langle x_{k,i}, y_k\rangle \geq 0$ for all $i=1,\ldots,r$ and all $k=1,2,\ldots$. Indeed, if $\langle x_{k,i}, y_k\rangle < 0$ for some $k$ and $i$ then, by replacing ``$x_{k,i}$'' by ``$-x_{k,i}$'', we can only increase the number $\langle \mathbf x_k, \mathbf y_k \rangle_\lambda$ in \eqref{pr:cosCDequal1:supLambda}.

  Obviously, the above-defined sequence of pairs $\{\mathbf x_k, \mathbf y_k\}_{k=1}^\infty$ satisfies \eqref{pr:cosCDequal1:toShowA} for all $\omega \in \Omega_r$ and \eqref{pr:cosCDequal1:toShowB} for $\omega = \lambda$. What remains to be shown is that $\{\mathbf x_k, \mathbf y_k\}_{k=1}^\infty$ satisfies \eqref{pr:cosCDequal1:toShowB} for all $\omega \in \Omega_r$. Before doing so, we investigate the properties of $\langle x_{k,i}, y_k \rangle$ in more detail.

  Note that by the Cauchy-Schwarz inequality, we have $\langle x_{k,i}, y_k\rangle \leq 1$. We now show that for each $i=1,\ldots,r$, we have
  \begin{equation}\label{pr:cosCDequal1:toShowC}
    \lim_{k\to\infty}\langle x_{k,i}, y_k \rangle = 1.
  \end{equation}
  Suppose to the contrary that
  \begin{equation}
    \liminf_{k\to\infty}\langle x_{k,j}, y_k \rangle
    = \lim_{n\to\infty} \langle x_{k_n,j}, y_{k_n} \rangle
    = 1 - \varepsilon
  \end{equation}
  for some $j$ and some $\varepsilon \in (0,1]$, where $\{k_n\}_{n=1}^\infty$ is a subsequence of $\{k\}_{k=1}^\infty$. By taking $n$ large enough, we may assume that $\langle x_{k_n,j}, y_{k_n} \rangle \leq 1 - \frac \varepsilon 2$. This, when combined with \eqref{pr:cosCDequal1:supLambda}, leads to
  \begin{equation}\label{}
     1-\frac 1 {k_n}
    \leq \lambda_j \langle x_{k_n,j}, y_{k_n} \rangle
      + \sum_{i\neq j} \lambda_i \langle x_{k_n,i}, y_{k_n} \rangle
    \leq \lambda_j (1 - \frac \varepsilon 2) + \sum_{i\neq j} \lambda_i
    =  1-\lambda_j \frac \varepsilon 2 < 1,
  \end{equation}
  which is a contradiction since the left-hand side converges to one as $n\to\infty$.

  We are now ready to show that the above-defined sequence of pairs $\{\mathbf x_k, \mathbf y_k\}_{k=1}^\infty$ satisfies \eqref{pr:cosCDequal1:toShowB} for all $\omega \in \Omega_r$. Indeed, let $\omega \in \Omega_r$. Moreover, let $\varepsilon \in (0,1)$ and let $n \in \{1,\ldots,r\}$ be an integer such that $\sum_{i=1}^{n}\omega_i \geq \sqrt{1-\varepsilon}$. Obviously, when $r\in \mathbb Z_+$, we can take $n:=r$.  By \eqref{pr:cosCDequal1:toShowC},  we may assume that $\langle x_{k,i}, y_k \rangle \geq \sqrt{1-\varepsilon}$ for all $i=1,2,\ldots,n$ and all large enough $k \geq K_n$. Thus, for all $k\geq K_n$, we arrive at
  \begin{equation}\label{}
    1 \geq \langle \mathbf x_k, \mathbf y_k \rangle_\omega
    \geq \sum_{i=1}^{n}\omega_i \langle x_{k,i}, y_k \rangle
    \geq \sum_{i=1}^{n}\omega_i \sqrt{1-\varepsilon}
    \geq 1-\varepsilon,
  \end{equation}
  which shows that $\langle \mathbf x_k, \mathbf y_k \rangle_\omega \to 1$ as $k\to \infty$. This proves \eqref{pr:cosCDequal1:toShowB} and completes the proof of the lemma itself.
\end{proof}

\begin{remark}[Erratum to \cite{ReichZalas2017}]\label{rem:erratum}
  As we have already observed in Proposition \ref{prop:inclusion}, $\mathbf C_{\omega} \cap \mathbf M_{\omega}^\perp$ may be a proper subset of $ \mathbf C_{\omega} \cap (\mathbf C_{\omega} \cap \mathbf D_{\omega})^\perp$ and equality need not hold. Consequently, the argument used in the proof of \cite[Theorem 8]{ReichZalas2017} preceding \cite[equality (19)]{ReichZalas2017} was incorrect. However, Lemma \ref{lem:cosCD1} justifies the validity of \cite[equality (19)]{ReichZalas2017} because
  \begin{equation}\label{}
    \cos_\omega(\mathbf C_\omega, \mathbf D_\omega)
    = \sup \left\{ \frac{\langle \mathbf x, \mathbf y \rangle_\omega}{\|\mathbf x\|_\omega \|\mathbf y\|_\omega}
    \colon
    \begin{array}{l}
      \mathbf x \in \mathbf C_\omega \cap \mathbf M_\omega^\perp,\ \mathbf x \neq \mathbf 0\\
      \mathbf y \in \mathbf D_\omega \cap \mathbf M_\omega^\perp,\ \mathbf y \neq \mathbf 0
    \end{array}
    \right\}.
  \end{equation}
\end{remark}

\section{Asymptotic Properties of the Simultaneous Projection Method} \label{sec:AsymptoticProp}

In this section we oftentimes refer to the subspace $A_\omega(\mathbf C_\omega^\perp)$, the explicit form of which is given in Proposition \ref{prop:Minkowski}. We begin with a theorem which corresponds to equivalence \eqref{int:equivalence}.

\begin{theorem} \label{thm:equiv}
  Let $\omega \in \Omega_r$. The following conditions are equivalent:
\begin{multicols}{2}
\begin{enumerate}[(i)]
  \item $\|T_\omega - P_{M}\|<1$;
  \item $\cos_{\omega}(\mathbf C_{\omega}, \mathbf D_{\omega})<1$;
  \item The set $A_\omega(\mathbf C_\omega^\perp)$ is closed in $\mathcal H$;

  \item $\|P_{\mathbf D_{\omega}} P_{\mathbf C_{\omega}} - P_{\mathbf C_{\omega} \cap \mathbf D_{\omega}}\|_{\omega} <1 $;
  \item $\{\mathbf C_{\omega}, \mathbf D_{\omega}\}$ is linearly regular;
  \item $\mathbf C_{\omega}^\perp + \mathbf D_{\omega}^\perp$ is closed in $\mathbf H_\omega$.
\end{enumerate}
\end{multicols}
\end{theorem}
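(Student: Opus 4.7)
The plan is to show that five of the six conditions follow almost immediately from results already in the paper (chiefly Theorem~\ref{thm:norm} and Proposition~\ref{prop:Minkowski}), and then to invoke the classical two-subspace equivalences in the product Hilbert space $\mathbf H_\omega$ to tie in the remaining conditions (v) and (vi). In broad strokes, everything in $\mathcal H$ is translated to a statement about the pair $(\mathbf C_\omega, \mathbf D_\omega)$ in $\mathbf H_\omega$, where standard theory does the heavy lifting.

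First, I would dispatch (i) $\Leftrightarrow$ (ii) by specializing Theorem~\ref{thm:norm} to $k=1$, which gives $\|T_\omega - P_M\| = \cos_\omega(\mathbf C_\omega, \mathbf D_\omega)^2$; both sides are in $[0,1]$, so one is strictly less than $1$ iff the other is. Next, (ii) $\Leftrightarrow$ (iv) can be read off from equation \eqref{pr:norm:1} inside the proof of Theorem~\ref{thm:norm}: the factorization $P_{\mathbf D_\omega}P_{\mathbf C_\omega}P_{\mathbf D_\omega} - P_{\mathbf C_\omega \cap \mathbf D_\omega} = (P_{\mathbf D_\omega}P_{\mathbf C_\omega} - P_{\mathbf C_\omega \cap \mathbf D_\omega})(P_{\mathbf D_\omega}P_{\mathbf C_\omega} - P_{\mathbf C_\omega \cap \mathbf D_\omega})^*$ combined with $C^*$-identity yields $\|P_{\mathbf D_\omega}P_{\mathbf C_\omega} - P_{\mathbf C_\omega \cap \mathbf D_\omega}\|_\omega^{\,2} = \cos_\omega(\mathbf C_\omega, \mathbf D_\omega)^2$, so (iv) is the same strict inequality as (ii).

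For (iii) $\Leftrightarrow$ (vi), Proposition~\ref{prop:Minkowski} explicitly states that $A_\omega(\mathbf C_\omega^\perp)$ is closed in $\mathcal H$ if and only if $\mathbf C_\omega^\perp + \mathcal N(A_\omega) = \mathbf C_\omega^\perp + \mathbf D_\omega^\perp$ is closed in $\mathbf H_\omega$, so there is nothing further to prove here. Then the remaining link (ii) $\Leftrightarrow$ (vi) is precisely the classical Deutsch--Bauschke--Borwein equivalence \eqref{int:equivalence}, applied to the closed linear subspaces $\mathbf C_\omega$ and $\mathbf D_\omega$ of the Hilbert space $\mathbf H_\omega$: the Friedrichs-angle cosine between two closed subspaces is strictly less than one precisely when the sum of their orthogonal complements is closed.

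Finally, (v) is definitionally equivalent to the existence of a constant $\kappa$ such that $\dist(\mathbf x, \mathbf C_\omega \cap \mathbf D_\omega) \leq \kappa\bigl(\dist(\mathbf x, \mathbf C_\omega) + \dist(\mathbf x, \mathbf D_\omega)\bigr)$ for all $\mathbf x \in \mathbf H_\omega$; the classical theorem of Bauschke--Borwein (cf.\ \cite{BauschkeBorwein1993} and \cite[Theorem~9.35]{Deutsch2001}) identifies this linear regularity with condition (iv) in any Hilbert space, giving (iv) $\Leftrightarrow$ (v). Chaining these pieces together, (i) $\Leftrightarrow$ (ii) $\Leftrightarrow$ (iv) $\Leftrightarrow$ (v) and (ii) $\Leftrightarrow$ (vi) $\Leftrightarrow$ (iii), completes the cycle. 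The only mild subtlety is that no step should depend on $r$ being finite—each of the invoked results (Theorem~\ref{thm:norm}, Proposition~\ref{prop:Minkowski}, and the classical two-subspace facts) is valid in the Hilbert space $\mathbf H_\omega$ regardless of whether $r < \infty$ or $r = \infty$, so the argument is uniform in $r$. The main obstacle is not really technical but bookkeeping: making sure the classical equivalences are invoked in $\mathbf H_\omega$ and not mistakenly in $\mathcal H$.
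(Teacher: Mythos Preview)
Your proof is correct and follows essentially the same route as the paper: Theorem~\ref{thm:norm} (and its proof) for (i)$\Leftrightarrow$(ii)$\Leftrightarrow$(iv), Proposition~\ref{prop:Minkowski} for (iii)$\Leftrightarrow$(vi), the classical equivalence \eqref{int:equivalence} in $\mathbf H_\omega$ for (ii)$\Leftrightarrow$(vi), and a Bauschke--Borwein result for linear regularity. The only cosmetic difference is that the paper ties (v) to (vi) via \cite[Theorem 5.19]{BauschkeBorwein1996} rather than to (iv), and it invokes Theorem~\ref{int:th:APM} directly for (iv) instead of quoting equation~\eqref{pr:norm:1}.
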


\begin{proof}
By \eqref{int:equivalence} applied to $\mathbf C_\omega$ and $\mathbf D_\omega$, we have the equivalence between (ii) and (vi). Similarly, by \cite[Theorem 5.19]{BauschkeBorwein1996} applied to $\mathbf C_\omega$ and $\mathbf D_\omega$, we obtain the equivalence between (v) and (vi). Recall that $\{\mathbf C_{\omega}, \mathbf D_{\omega}\}$ is said to be linearly regular if the inequality $\max\{d(\mathbf x,\mathbf C_{\omega}), d(\mathbf x, \mathbf D_{\omega})\} \leq \kappa d(\mathbf x, \mathbf C_\omega \cap \mathbf D_\omega)$ holds for all $\mathbf x \in \mathbf H_\omega$ and some $\kappa >0$. Proposition \ref{prop:Minkowski} verifies the equivalence between (iii) and (vi). Finally, by  Theorems  \ref{thm:norm}  and \ref{int:th:APM},  we have
\begin{equation}\label{}
  \|T_\omega - P_M\| =  \cos_\omega (\mathbf C_\omega, \mathbf D_\omega)^2
  = \|P_{\mathbf D_{\omega}}P_{\mathbf C_{\omega}} - P_{\mathbf C_{\omega} \cap \mathbf D_{\omega}}\|_\omega,
\end{equation}
which explains the equivalence between (i), (ii) and (iv).
\end{proof}

\begin{theorem}[Dichotomy]\label{th:dichotomy}
  Exactly one of the following two statements holds:
  \begin{enumerate}[(i)]
    \item For all $\omega\in \Omega_r$, the  set $A_\omega(\mathbf C_\omega^\perp)$ is closed in $\mathcal H$.  Then  the sequence $\{T^k_{\omega}\}_{k=1}^\infty$  converges linearly to $P_M$ as $k\to \infty$ and the optimal error bound is given by
        \begin{equation}\label{th:dichotomy:estimate}
        \|T_\omega^k(x) - P_{M}(x)\| \leq \cos_{\omega}(\mathbf C_{\omega}, \mathbf D_{\omega})^{2k} \cdot \|x\|.
      \end{equation}
    \item For all $\omega \in \Omega_r$, the  set $A_\omega(\mathbf C_\omega^\perp)$ is not closed in $\mathcal H$.  Then  the sequence $\{T^k_{\omega}\}_{k=1}^\infty$  converges arbitrarily slowly to $P_{M}$ as $k\to \infty$.
  \end{enumerate}
\end{theorem}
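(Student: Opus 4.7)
My plan relies on combining several results already established in the paper: the exact norm formula of Theorem \ref{thm:norm}, the equivalences of Theorem \ref{thm:equiv}, the weight-independence of the parallel condition from Theorem \ref{thm:cosCDequal1}, Pierra's identity from Theorem \ref{thm:normConvergence}, and finally the classical dichotomy theorem \ref{int:th:dichotomy} applied to the pair $\{\mathbf C_\omega, \mathbf D_\omega\}$ in the product space $\mathbf H_\omega$.

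\textbf{Step 1 (dichotomy is exhaustive and mutually exclusive).} By Theorem \ref{thm:equiv}, closedness of $A_\omega(\mathbf C_\omega^\perp)$ in $\mathcal H$ is equivalent to $\cos_\omega(\mathbf C_\omega, \mathbf D_\omega) < 1$. By Theorem \ref{thm:cosCDequal1}, the condition $\cos_\omega(\mathbf C_\omega, \mathbf D_\omega) = 1$ is independent of the choice of $\omega \in \Omega_r$. Therefore either $A_\omega(\mathbf C_\omega^\perp)$ is closed for \emph{every} $\omega \in \Omega_r$ (case (i)), or it fails to be closed for every $\omega \in \Omega_r$ (case (ii)); no mixed behaviour is possible.

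\textbf{Step 2 (case (i), linear convergence with optimal rate).} Assume $A_\omega(\mathbf C_\omega^\perp)$ is closed. Then by Theorem \ref{thm:equiv} we have $\cos_\omega(\mathbf C_\omega, \mathbf D_\omega) < 1$. By Theorem \ref{thm:norm},
\begin{equation}
\|T_\omega^k - P_M\| = \cos_\omega(\mathbf C_\omega, \mathbf D_\omega)^{2k} < 1,
\end{equation}
which immediately yields the estimate \eqref{th:dichotomy:estimate}. Since this value is precisely the operator norm of $T_\omega^k - P_M$, the estimate is optimal: it is attained as a supremum over $x$ with $\|x\| \leq 1$.

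\textbf{Step 3 (case (ii), arbitrarily slow convergence).} Assume $A_\omega(\mathbf C_\omega^\perp)$ is not closed in $\mathcal H$. By Theorem \ref{thm:equiv}, $\mathbf C_\omega^\perp + \mathbf D_\omega^\perp$ is not closed in $\mathbf H_\omega$. Apply Theorem \ref{int:th:dichotomy} (ii) to the pair of subspaces $\mathbf C_\omega$, $\mathbf D_\omega$ in the Hilbert space $\mathbf H_\omega$: for any given nonnegative null sequence $\{a_k\}_{k=1}^\infty$ there exists $\mathbf z \in \mathbf H_\omega$ with
\begin{equation}
\big\|(P_{\mathbf D_\omega} P_{\mathbf C_\omega})^k(\mathbf z) - P_{\mathbf C_\omega \cap \mathbf D_\omega}(\mathbf z)\big\|_\omega \geq a_{k+1}, \quad k = 0, 1, 2, \ldots.
\end{equation}

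\textbf{Step 4 (transfer to starting points in the diagonal).} The subtle point, and the only place where real work is required, is converting this slow convergence to starting points of the form $\mathbf x = \{x\}_{i=1}^r \in \mathbf D_\omega$, as is needed for the iterates $T_\omega^k(x)$. Define $\mathbf x' := (P_{\mathbf D_\omega} P_{\mathbf C_\omega})(\mathbf z) \in \mathbf D_\omega$; by Lemma \ref{lem:ProjCD} this has the form $\mathbf x' = \{x'\}_{i=1}^r$ for $x' := \sum_{i=1}^r \omega_i P_{M_i}(z_i) \in \mathcal H$. Using the commutation relation \eqref{pr:norm:PCD_PC_PD}, namely $P_{\mathbf C_\omega \cap \mathbf D_\omega} P_{\mathbf D_\omega} P_{\mathbf C_\omega} = P_{\mathbf C_\omega \cap \mathbf D_\omega}$, we obtain $P_{\mathbf C_\omega \cap \mathbf D_\omega}(\mathbf x') = P_{\mathbf C_\omega \cap \mathbf D_\omega}(\mathbf z)$, so for every $k \geq 0$,
\begin{equation}
\big\|(P_{\mathbf D_\omega} P_{\mathbf C_\omega})^{k+1}(\mathbf z) - P_{\mathbf C_\omega \cap \mathbf D_\omega}(\mathbf z)\big\|_\omega
= \big\|(P_{\mathbf D_\omega} P_{\mathbf C_\omega})^{k}(\mathbf x') - P_{\mathbf C_\omega \cap \mathbf D_\omega}(\mathbf x')\big\|_\omega.
\end{equation}
Combining this with the Pierra identity from Theorem \ref{thm:normConvergence} applied to $\mathbf x'$ yields $\|T_\omega^k(x') - P_M(x')\| \geq a_{k+1}$ for all $k \geq 0$, which (after relabelling indices) witnesses arbitrarily slow convergence of $\{T_\omega^k\}$ to $P_M$.

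The only real obstacle is Step 4, since Theorem \ref{int:th:dichotomy} (ii) produces a bad starting point $\mathbf z$ somewhere in $\mathbf H_\omega$ rather than in the diagonal $\mathbf D_\omega$; the commutation identity \eqref{pr:norm:PCD_PC_PD} and the absorption of one alternating iteration into the diagonal handle this cleanly at the cost of a harmless index shift. The rest is direct bookkeeping using the already established machinery.
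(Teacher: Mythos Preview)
Your proof is correct and follows essentially the same route as the paper's: both invoke Theorems \ref{thm:cosCDequal1} and \ref{thm:equiv} for the dichotomy itself, Theorem \ref{thm:norm} for the linear-rate case, and for case (ii) both apply Theorem \ref{int:th:dichotomy} in $\mathbf H_\omega$, absorb one alternating step via the commutation identity \eqref{pr:norm:PCD_PC_PD} to land in the diagonal $\mathbf D_\omega$, and then use the Pierra identity from Theorem \ref{thm:normConvergence}. The paper handles the index shift you call ``relabelling'' by explicitly replacing $\{a_k\}$ with the padded sequence $b_1=b_2=a_1$, $b_k=a_{k-1}$ before invoking Theorem \ref{int:th:dichotomy}; your Step~3 as written (claiming the bound also at $k=0$) is a slight overstatement of what that theorem gives, but it is harmless since Step~4 only needs $k\geq 1$ and your final relabelling remark is exactly the paper's fix.
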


\begin{proof}
  By combining Theorems \ref{thm:cosCDequal1} and \ref{thm:equiv}, we see that either  $A_\omega(\mathbf C_\omega^\perp)$ is closed for all $\omega \in \Omega_r$ or $A_\omega(\mathbf C_\omega^\perp)$ is not closed for all $\omega \in \Omega_r$.  This shows the dichotomy between (i) and (ii).

  If we assume as in (i)  that $A_\omega(\mathbf C_\omega^\perp)$ is closed,  then both the linear convergence and the optimality of the estimate \eqref{th:dichotomy:estimate} follow from Theorems \ref{thm:norm}  and \ref{thm:equiv}.

  Assume now that  $A_\omega(\mathbf C_\omega^\perp)$ is not closed,  where $\omega \in \Omega_r$. We show that the sequence $\{T_\omega^k\}_{k=1}^\infty$ converges arbitrarily slowly to $P_{M}$ as $k\to \infty$. To this end, let $\{a_k\}_{k=1}^\infty \subset [0,\infty)$ be a null sequence and let  $\{b_k\}_{k=1}^\infty$ be defined by $b_1 := a_1,\ b_2 := a_1$ and $b_k := a_{k-1},\ k \geq 3$.  By Theorem \ref{thm:equiv}, we see that $\mathbf C_{\omega}^\perp + \mathbf D_{\omega}^\perp$ is not closed in $\mathbf H_\omega$. This, when combined with Theorem \ref{int:th:dichotomy}, implies that the sequence $\{(P_{\mathbf D_{\omega}}P_{\mathbf C_{\omega}})^k\}_{k=1}^\infty$ converges to $P_{\mathbf C_{\omega} \cap \mathbf D_{\omega}}$ arbitrarily slowly as $k \to \infty$. In particular, there is $\mathbf x \in \mathbf H_\omega$, such that
  \begin{equation}\label{pr:dichotomy:b_k}
    \|(P_{\mathbf D_{\omega}}P_{\mathbf C_{\omega}})^k(\mathbf x) - P_{\mathbf C_{\omega} \cap \mathbf D_{\omega}} (\mathbf x)\| \geq b_k
  \end{equation}
  for all $k=1,2,\ldots$. Note that $\mathbf y := P_{\mathbf D_{\omega}}P_{\mathbf C_{\omega}} (\mathbf x) \in \mathbf D_\omega$, hence $\mathbf y = \{y\}_{i=1}^r$ for some $y \in \mathcal H$. Moreover, $P_{\mathbf C_{\omega} \cap \mathbf D_{\omega}} (\mathbf x) = P_{\mathbf C_{\omega} \cap \mathbf D_{\omega}} (\mathbf y)$ (compare with \eqref{pr:norm:PCD_PC_PD}). Consequently, by  rewriting \eqref{pr:dichotomy:b_k}  in terms of $\mathbf y$ and $a_k$, and by Theorem \ref{thm:normConvergence}, we arrive at
  \begin{equation}
    \|T_\omega^k(y)-P_M(y) \|
    = \|(P_{\mathbf D_{\omega}}P_{\mathbf C_{\omega}})^k(\mathbf y) - P_{\mathbf C_{\omega} \cap \mathbf D_{\omega}} (\mathbf y)\| \geq a_k
  \end{equation}
  for all $k=1,2,\ldots$. This shows that the sequence $\{T_\omega^k\}_{k=1}^\infty$ converges arbitrarily slowly to $P_{M}$ as $k\to \infty$, as asserted.
\end{proof}

\begin{theorem}[Super-polynomial Rate]\label{th:superPolyRate}
  Let $\omega \in \Omega_r$  and assume that the set $A_\omega(\mathbf C_\omega^\perp)$ is  not closed  in $\mathcal H$. Then  the sequence $\{T^k_{\omega}\}_{k=1}^\infty$ converges super-polynomially fast to $P_{M}$, as $k\to \infty$, on some dense linear subspace $Y_\omega \subset \mathcal H$.
\end{theorem}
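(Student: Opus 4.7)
The plan is to apply the spectral theorem directly to the operator $T_\omega$, bypassing Pierra's product-space formalization. Recall from Section~\ref{sec:preliminaries} that $T_\omega$ is self-adjoint with $\|T_\omega\|\leq 1$ and $\fix T_\omega = M$; since $T_\omega$ is a convex combination of the positive operators $P_{M_i}$, we also have $T_\omega\geq 0$. Hence $\sigma(T_\omega)\subset[0,1]$, and the spectral projection at $\{1\}$ coincides with $P_{\ker(I-T_\omega)}=P_M$.

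Let $E$ denote the spectral measure of $T_\omega$. I would define
\begin{equation}
    Y_\omega := \Big(\bigcup_{\delta>0} E([0,1-\delta])(\mathcal H)\Big) + M.
\end{equation}
Since the family $\{E([0,1-\delta])(\mathcal H)\}_{\delta>0}$ increases as $\delta\downarrow 0$, its union is dense in $E([0,1))(\mathcal H) = (I-P_M)(\mathcal H) = M^\perp$, so $Y_\omega$ is a linear subspace of $\mathcal H$ which is dense in $M^\perp + M = \mathcal H$. For $y=y_0+m\in Y_\omega$ with $y_0\in E([0,1-\delta])(\mathcal H)$ and $m\in M$, one has $P_M(y)=m$, $T_\omega^k(m)=m$, and by spectral calculus $\|T_\omega^k(y_0)\|^2=\int_{[0,1-\delta]}t^{2k}\,d\|E(t)y_0\|^2\leq(1-\delta)^{2k}\|y_0\|^2$; therefore
\begin{equation}
    \|T_\omega^k(y)-P_M(y)\|=\|T_\omega^k(y_0)\|\leq(1-\delta)^k\|y_0\|\to 0
\end{equation}
at an exponential rate, which is in particular super-polynomial.

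The only substantive step is the spectral resolution of $T_\omega$, immediate from self-adjointness; curiously, the argument uses neither the hypothesis that $A_\omega(\mathbf C_\omega^\perp)$ is non-closed nor Theorem~\ref{int:th:superPoly}, and actually delivers pointwise exponential convergence. An alternative route, more in line with the rest of the paper, would apply Theorem~\ref{int:th:superPoly} to the three-fold product $\mathbf S:=P_{\mathbf D_\omega}P_{\mathbf C_\omega}P_{\mathbf D_\omega}$ on $\mathbf H_\omega$, producing a dense linear subspace $\mathbf X\subset\mathbf H_\omega$ on which $\mathbf S^k\to P_{\mathbf C_\omega\cap\mathbf D_\omega}$ super-polynomially. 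Pushing this to a dense subset of $\mathbf D_\omega$ by means of the identities $\mathbf T^k P_{\mathbf D_\omega}=\mathbf S^k$ and $P_{\mathbf C_\omega\cap\mathbf D_\omega}P_{\mathbf D_\omega}=P_{\mathbf C_\omega\cap\mathbf D_\omega}$ (the latter being part of \eqref{pr:norm:PCD_PC_PD}), and then transferring back to $\mathcal H$ via Theorem~\ref{thm:normConvergence} and the isometry $y\mapsto\{y\}_{i=1}^r$, would yield the desired $Y_\omega$. The main obstacle in this alternative is that Theorem~\ref{thm:normConvergence} only matches $T_\omega^k$ on $\mathcal H$ with $\mathbf T^k$ on diagonal elements of $\mathbf H_\omega$, which is why one must work with $\mathbf S$ (whose range is contained in $\mathbf D_\omega$) rather than $\mathbf T=P_{\mathbf D_\omega}P_{\mathbf C_\omega}$ directly.
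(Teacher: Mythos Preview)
Your spectral-theorem argument is correct and takes a genuinely different route from the paper. The paper stays within Pierra's framework: it invokes Theorem~\ref{int:th:superPoly} for the alternating projection $P_{\mathbf C_\omega}P_{\mathbf D_\omega}$ in $\mathbf H_\omega$ (using the hypothesis, via Theorem~\ref{thm:equiv}, to ensure that $\mathbf C_\omega^\perp+\mathbf D_\omega^\perp$ is not closed), obtains a dense subspace $\mathbf X_\omega\subset\mathbf H_\omega$ with super-polynomial rate, sets $Y_\omega:=\{y\in\mathcal H:\{y\}_{i=1}^r\in P_{\mathbf D_\omega}(\mathbf X_\omega)\}$, and transfers the rate back to $\mathcal H$ via the identity $(P_{\mathbf D_\omega}P_{\mathbf C_\omega})^k P_{\mathbf D_\omega}=P_{\mathbf D_\omega}(P_{\mathbf C_\omega}P_{\mathbf D_\omega})^k$ together with the nonexpansivity of $P_{\mathbf D_\omega}$. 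This is essentially your ``alternative route'', except that the paper works with the two-term product $P_{\mathbf C_\omega}P_{\mathbf D_\omega}$ rather than your three-term $\mathbf S$. Your direct approach is more elementary and self-contained: it bypasses the Badea--Seifert result entirely, does not use the non-closedness hypothesis, and in fact delivers pointwise \emph{exponential} convergence on $Y_\omega$, which is strictly stronger than the super-polynomial conclusion. The paper's route, by contrast, keeps the methodology uniform with the rest of the article and illustrates how two-subspace results lift through the product-space construction; it would also carry over to situations (such as cyclic products of projections) where the governing operator is not self-adjoint and the spectral theorem is unavailable.
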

\begin{proof}
  The argument follows the proof of \cite[Theorem 14]{ReichZalas2017}.  In view of Theorem \ref{thm:equiv}, the subspace $\mathbf C_\omega^\perp + \mathbf D_\omega^\perp$ is not closed.  By Theorem \ref{int:th:superPoly} applied to $\mathbf C_\omega$ and $\mathbf D_\omega$, there is a dense linear subspace $\mathbf X_\omega$ of $\mathbf H_\omega$ on which the sequence $\{(P_{\mathbf C_\omega}P_{\mathbf D_\omega})^k\}_{k=1}^\infty$ converges super-polynomially fast to $P_{\mathbf C_\omega \cap \mathbf D_\omega}$. Define
  \begin{equation}\label{}
    \mathbf Y_\omega:=P_{\mathbf D_\omega}(\mathbf X_\omega)
    \quad \text{and} \quad
    Y_\omega := \{y \in \mathcal H \colon \mathbf y = \{y\}_{i=1}^r \in \mathbf Y_\omega\}.
  \end{equation}
  Note that the linearity of $P_{\mathbf D_\omega}$ implies that $\mathbf Y_\omega$ and $Y_\omega$ are both linear subspaces of $\mathbf H_\omega$ and $\mathcal H$, respectively. Let $y \in Y_\omega$ and $\mathbf x \in \mathbf X_\omega$ be such that $\mathbf y = \{y\}_{i=1}^r = P_{\mathbf D_\omega}(\mathbf x)$.  Then, by Lemma \ref{lem:ProjCD}, \eqref{pr:norm:PCD_PC_PD}  and by the nonexpansivity of $P_{\mathbf D_\omega}$, for each $n=1,2,\ldots,$ we have
  \begin{align}\nonumber
    k^n \|T_\omega^k(y) - P_M(y)\|
    & = k^n \|(P_{\mathbf D_\omega}P_{\mathbf C_\omega})^k(\mathbf y) - P_{\mathbf C_\omega \cap \mathbf D_\omega}(\mathbf y) \|_\omega \\ \nonumber
    & = k^n \|P_{\mathbf D_\omega}(P_{\mathbf C_\omega}P_{\mathbf D_\omega})^k(\mathbf x) - P_{\mathbf D_\omega}P_{\mathbf C_\omega \cap \mathbf D_\omega}(\mathbf x) \|_\omega \\
    &  \leq k^n \|(P_{\mathbf C_\omega}P_{\mathbf D_\omega})^k(\mathbf x) - P_{\mathbf C_\omega \cap \mathbf D_\omega}(\mathbf x) \|_\omega \to 0
  \end{align}
  as $k \to \infty.$ This shows that the sequence $\{T^k_{\omega}\}_{k=0}^\infty$ converges super-polynomially fast to $P_{M}$ on $Y_\omega$.

  We now show that $Y_\omega$ is a dense linear subspace of $\mathcal H$. Indeed, let $x\in \mathcal H$ and let $\mathbf x := \{x\}_{i=1}^r \in \mathbf D_\omega$. Since $\mathbf X_\omega$ is dense in $\mathbf H_\omega$, there is $\{\mathbf x_k\}_{k=1}^\infty$ in $\mathbf X_\omega$ such that $\mathbf x_k \to \mathbf x$. Let $\mathbf y_k:= P_{\mathbf D_\omega}(\mathbf x_k)$. Since $\mathbf y_k \in \mathbf Y_\omega$, there is $y_k\in Y_\omega$ such that $\mathbf y_k= \{y_k\}_{i=1}^r$. Again, by Lemma \ref{lem:ProjCD} and by the nonexpansivity of $P_{\mathbf D_\omega}$, we arrive at
  \begin{equation}\label{}
    \|y_k -x\|
    = \|\mathbf y_k - \mathbf x\|_\omega
    = \|P_{\mathbf D_\omega}(\mathbf x_k) - P_{\mathbf D_\omega}(\mathbf x) \|_\omega
    \leq \|\mathbf x_k - \mathbf x\|_\omega \to 0
  \end{equation}
  as $k\to \infty$. This completes the proof.
\end{proof}

\begin{theorem}[Polynomial Rate]\label{th:polyRate}
  Let $\omega \in \Omega_r$. Assume that $y \in A_\omega(C_\omega^\perp)$. Then there is $C_\omega(y)>0$ such that for all $k$, we have
  \begin{equation}\label{th:polyRate:eq}
   \|T_\omega^k(y) - P_{M}(y)\| \leq \frac{C_\omega(y)}{\sqrt{k}}.
  \end{equation}
\end{theorem}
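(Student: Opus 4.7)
The plan is to use Pierra's formalization (Theorem \ref{thm:normConvergence}) to reduce the claim to an alternating projection estimate in $\mathbf H_\omega$ between $\mathbf C_\omega$ and $\mathbf D_\omega$, and then to invoke Theorem \ref{int:th:poly} of Borodin and Kopeck\'{a}. Put $\mathbf y := \{y\}_{i=1}^r \in \mathbf D_\omega$; Theorem \ref{thm:normConvergence} gives
\begin{equation}
\|T_\omega^k(y) - P_M(y)\| = \|(P_{\mathbf D_\omega}P_{\mathbf C_\omega})^k\mathbf y - P_{\mathbf C_\omega \cap \mathbf D_\omega}\mathbf y\|_\omega,
\end{equation}
so it suffices to control the right-hand side by $C/\sqrt{k}$.

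First I would verify that $\mathbf y \in \mathbf C_\omega^\perp + \mathbf D_\omega^\perp$. By hypothesis there exists $\mathbf z \in \mathbf C_\omega^\perp$ with $A_\omega(\mathbf z) = y$. Since $A_\omega(\mathbf y) = y$ as well, Proposition \ref{prop:Minkowski} yields $\mathbf z - \mathbf y \in \mathcal N(A_\omega) = \mathbf D_\omega^\perp$, and the identity $\mathbf y = \mathbf z - (\mathbf z - \mathbf y)$ exhibits $\mathbf y$ as an element of $\mathbf C_\omega^\perp + \mathbf D_\omega^\perp$. This subspace is automatically orthogonal to $\mathbf C_\omega \cap \mathbf D_\omega$, so $P_{\mathbf C_\omega \cap \mathbf D_\omega}\mathbf y = 0$ and the quantity to estimate simplifies to $\|(P_{\mathbf D_\omega}P_{\mathbf C_\omega})^k\mathbf y\|_\omega$.

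Next I would apply Theorem \ref{int:th:poly} to $(\mathbf C_\omega, \mathbf D_\omega)$ inside $\mathbf H_\omega$. As stated the theorem requires the intersection to be trivial, which typically fails here because $\mathbf C_\omega \cap \mathbf D_\omega = \{\{m\}_{i=1}^r \colon m \in M\}$. I would bypass this by restricting everything to the closed invariant subspace $\widetilde{\mathbf H} := (\mathbf C_\omega \cap \mathbf D_\omega)^\perp$ of $\mathbf H_\omega$. By the commutation identities \eqref{pr:norm:PCD_PC_PD} the projections $P_{\mathbf C_\omega}$ and $P_{\mathbf D_\omega}$ preserve $\widetilde{\mathbf H}$, and their restrictions coincide with the projections onto $\widetilde{\mathbf C} := \mathbf C_\omega \cap \widetilde{\mathbf H}$ and $\widetilde{\mathbf D} := \mathbf D_\omega \cap \widetilde{\mathbf H}$, whose intersection is $\{0\}$. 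Using the orthogonal decompositions $\mathbf C_\omega = \widetilde{\mathbf C} \oplus (\mathbf C_\omega \cap \mathbf D_\omega)$ and analogously for $\mathbf D_\omega$, a short check identifies the orthogonal complement of $\widetilde{\mathbf C}$ inside $\widetilde{\mathbf H}$ with $\mathbf C_\omega^\perp$, and similarly for $\widetilde{\mathbf D}$. Hence $\mathbf y$ belongs to the sum of these complements inside $\widetilde{\mathbf H}$, and Theorem \ref{int:th:poly} applied to $(\widetilde{\mathbf C}, \widetilde{\mathbf D})$ in $\widetilde{\mathbf H}$ delivers the bound with $C_\omega(y) := C(\mathbf y)$.

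The main technical obstacle is the bookkeeping around the potentially non-trivial subspaces $\mathbf C_\omega \cap \mathbf D_\omega$ and $\mathbf C_\omega^\perp \cap \mathbf D_\omega^\perp$: the former is absorbed by the restriction to $\widetilde{\mathbf H}$ outlined above, while the latter is covered by the extension of the Borodin--Kopeck\'{a} estimate to the non-dense case $M_1^\perp \cap M_2^\perp \neq \{0\}$ flagged in the remark following Theorem \ref{int:th:poly}.
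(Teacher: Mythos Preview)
Your proposal is correct and follows essentially the same route as the paper. Your subspaces $\widetilde{\mathbf C}$ and $\widetilde{\mathbf D}$ are exactly the paper's $\mathbf M_1 := \mathbf C_\omega \cap (\mathbf C_\omega \cap \mathbf D_\omega)^\perp$ and $\mathbf M_2 := \mathbf D_\omega \cap (\mathbf C_\omega \cap \mathbf D_\omega)^\perp$; the paper derives the operator identity $(P_{\mathbf D_\omega}P_{\mathbf C_\omega})^k - P_{\mathbf C_\omega\cap\mathbf D_\omega} = (P_{\mathbf M_2}P_{\mathbf M_1})^k$ on all of $\mathbf H_\omega$ and then applies Theorem~\ref{int:th:poly} there, whereas you phrase the same reduction as a restriction to $\widetilde{\mathbf H}=(\mathbf C_\omega\cap\mathbf D_\omega)^\perp$ --- the two arguments are interchangeable via the commutation relations \eqref{pr:norm:PCD_PC_PD}. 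One small remark: once you work inside $\widetilde{\mathbf H}$ with $\widetilde{\mathbf C}\cap\widetilde{\mathbf D}=\{0\}$, Theorem~\ref{int:th:poly} applies verbatim and the ``non-dense'' extension you mention at the end is not actually needed.
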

\begin{proof}
  We first show that in spite of the possible inequality $\mathbf C_\omega \cap \mathbf D_\omega \neq \{\mathbf 0\}$ (see Theorem \ref{int:th:poly}), for all $\mathbf x = \{x_i\}_{i=1}^r \in \mathbf C_\omega^\perp + \mathbf D_\omega^\perp$ there is $C_\omega(\mathbf x)>0$ such that
  \begin{equation}\label{pr:polyRate:step1}
    \|(P_{\mathbf D_\omega}P_{\mathbf C_\omega})^k(\mathbf x) - P_{\mathbf C_\omega \cap \mathbf D_\omega}(\mathbf x)\|_\omega
    \leq \frac{C_\omega(\mathbf x)}{\sqrt{k}}
  \end{equation}
  for all $k=1,2,\ldots$. To this end, let $\mathbf M_1 := \mathbf C_\omega \cap (\mathbf C_\omega \cap \mathbf D_\omega)^\perp$ and $\mathbf M_2 := \mathbf D_\omega \cap (\mathbf C_\omega \cap \mathbf D_\omega)^\perp$. Recall again that the projections $P_{\mathbf C_\omega}$ and $P_{\mathbf D_\omega}$ commute with $P_{\mathbf C_\omega \cap \mathbf D_\omega}$; see \eqref{pr:norm:PCD_PC_PD}.   Similarly to \eqref{pr:cosCD1:PM_PMi:perp}, they commute with $P_{(\mathbf C_\omega \cap \mathbf D_\omega)^\perp}$, that is,
  \begin{equation}\label{}
    P_{\mathbf M_1}
    = P_{(\mathbf C_\omega \cap \mathbf D_\omega)^\perp} P_{\mathbf C_\omega}
    = P_{\mathbf C_\omega} P_{(\mathbf C_\omega \cap \mathbf D_\omega)^\perp},
  \end{equation}
  \begin{equation}
    P_{\mathbf M_2}
    = P_{(\mathbf C_\omega \cap \mathbf D_\omega)^\perp} P_{\mathbf D_\omega}
    = P_{\mathbf D_\omega} P_{(\mathbf C_\omega \cap \mathbf D_\omega)^\perp}.
  \end{equation}
  Using their linearity and the above-mentioned commuting properties, we obtain
  \begin{align}\label{} \nonumber
    (P_{\mathbf D_\omega}P_{\mathbf C_\omega})^k - P_{\mathbf C_\omega \cap \mathbf D_\omega}
    & = (P_{\mathbf D_\omega}P_{\mathbf C_\omega})^k - (P_{\mathbf D_\omega}P_{\mathbf C_\omega})^k P_{\mathbf C_\omega \cap \mathbf D_\omega}\\ \nonumber
    & = (P_{\mathbf D_\omega}P_{\mathbf C_\omega})^k P_{(\mathbf C_\omega \cap \mathbf D_\omega)^\perp}\\ \nonumber
    & = (P_{\mathbf D_\omega}P_{\mathbf C_\omega})^k (P_{(\mathbf C_\omega \cap \mathbf D_\omega)^\perp})^{2k}\\
    & = (P_{\mathbf M_2} P_{\mathbf M_1})^k.
  \end{align}
  We may now apply Theorem \ref{int:th:poly} to $\mathbf M_1$ and $\mathbf M_2$ because $\mathbf M_1 \cap \mathbf M_2 = \{\mathbf 0\}$. Thus, for every $\mathbf x \in \mathbf M_1^\perp + \mathbf M_2^\perp$ there is $C_\omega(\mathbf x)>0$ such that
  \begin{equation}\label{pr:polyRate:ineq}
    \|(P_{\mathbf D_\omega}P_{\mathbf C_\omega})^k(\mathbf x) - P_{\mathbf C_\omega \cap \mathbf D_\omega}(\mathbf x)\|_\omega
    = \|(P_{\mathbf M_2} P_{\mathbf M_1})^k(\mathbf x)\|_\omega
    \leq \frac{C_\omega(\mathbf x)}{\sqrt{k}}.
  \end{equation}
  Note that, by \cite[Theorem 4.6 (5)]{Deutsch2001} (or by \eqref{prop:Minkowski:M} with $r = 2$), we obtain
  \begin{equation}\label{}
    \mathbf C_\omega^\perp \subset \overline{\mathbf C_\omega^\perp + (\mathbf C_\omega \cap \mathbf D_\omega)} = \mathbf M_1^\perp
    \quad \text{and} \quad
    \mathbf D_\omega^\perp \subset \overline{\mathbf D_\omega^\perp + (\mathbf C_\omega \cap \mathbf D_\omega)} = \mathbf M_2^\perp.
  \end{equation}
  Consequently, $\mathbf C_\omega^\perp + \mathbf D_\omega^\perp \subset \mathbf M_1^\perp + \mathbf M_2^\perp$, which, when combined with \eqref{pr:polyRate:ineq}, proves \eqref{pr:polyRate:step1}.

  We may now return to \eqref{th:polyRate:eq}. Let $y \in A_\omega(\mathbf C_\omega^\perp)$ and let $\mathbf y = \{y\}_{i=1}^r$. Then $y = A_\omega(\mathbf x)$ for some $\mathbf x \in \mathbf C_\omega^\perp$ and, by \eqref{lem:ProjCD:D},
  $\mathbf y = P_{\mathbf D_\omega}(\mathbf x) = \mathbf x - P_{\mathbf D_\omega^\perp}(\mathbf x) \in \mathbf C_\omega^\perp + \mathbf D_\omega^\perp$. Consequently, by \eqref{pr:polyRate:step1}, there is $C_\omega(y):=C_\omega(\mathbf y)>0$ such that
  \begin{equation}\label{}
    \|T_\omega^k(y) - P_{M}(y)\|
    = \|(P_{\mathbf D_\omega}P_{\mathbf C_\omega})^k(\mathbf y) - P_{\mathbf C_\omega \cap \mathbf D_\omega}(\mathbf y)\|_\omega
    \leq \frac{C_\omega(y)}{\sqrt{k}}
  \end{equation}
  for all $k=1,2,\ldots$, where the equality follows from \eqref{thm:normConvergence:eq}.
\end{proof}

\section{Appendix} \label{sec:Appendix}
It is well known that if a series $\sum_{i=1}^{\infty} y_i$ in $\mathcal H$ is \emph{absolutely convergent}, that is, when $\sum_{i=1}^{\infty}\|y_i\| <\infty$, then it is also \emph{unconditionally convergent}, that is, $\sum_{i=1}^{\infty} y_{\sigma(i)}$ exists for all bijections $\sigma$ in $\mathbb Z_+$ and equals $\sum_{i=1}^{\infty} y_i$. At this point recall that the unconditionally convergent series coincide with the absolutely convergent series if and only if the space $\mathcal H$ is of finite dimension; see \cite{DvoretzkyRogers1950}.

We slightly strengthen the unconditional convergence in Lemma \ref{lem:rearrangement} below. To this end,  for an absolutely convergent series $\sum_{i=1}^{\infty}y_i$ and for any subset $I = \{i_1, i_2,\ldots\}$ of $\mathbb Z_+$, we formally define $\sum_{i\in I} y_i := \sum_{l=1}^{\# I} y_{i_l}$.  A result similar to Lemma \ref{lem:rearrangement} can be found, for example, in \cite[Theorem 6.3.1]{KrizPultr2013} for $\mathcal H=\mathbb R$.

\begin{lemma}[Rearrangement Lemma]\label{lem:rearrangement}
  Let $q \in \mathbb Z_+ \cup \{\infty\}$ and let $\{I_j\}_{j=1}^q$ consist of nonempty and pairwise disjoint subsets of $\mathbb Z_+$, possibly infinite, such that $\mathbb Z_+ = \bigcup_{j=1}^q I_j$. Assume that the series $\sum_{i=1}^{\infty} y_i$ is absolutely convergent. Then
  \begin{equation}\label{lem:rearrangement:eq}
    \sum_{i=1}^{\infty} y_i
    =  \sum_{j=1}^q \left( \sum_{i \in I_j} y_i \right),
  \end{equation}
  where the summation over $j$, as well as the summations over $i\in I_j$, do not depend on the order of summands.
\end{lemma}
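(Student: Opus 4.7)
The plan is to deduce the lemma from the fact—recalled in the opening of the appendix and attributed to \cite{DvoretzkyRogers1950}—that absolutely convergent series in $\mathcal H$ are unconditionally convergent. I would first verify that each inner vector $s_j:=\sum_{i\in I_j}y_i$ is well defined and independent of any enumeration of $I_j$: the subseries indexed by $I_j$ is absolutely convergent since $\sum_{i\in I_j}\|y_i\|\leq\sum_{i=1}^\infty\|y_i\|<\infty$, so it is unconditionally convergent to a single vector $s_j\in\mathcal H$ under any ordering of $I_j$.

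Next I would show that the outer series $\sum_{j=1}^q s_j$ is absolutely convergent, via
\[
\sum_{j=1}^q\|s_j\|\leq\sum_{j=1}^q\sum_{i\in I_j}\|y_i\|=\sum_{i=1}^\infty\|y_i\|<\infty,
\]
where the middle equality is the nonnegative scalar counterpart of the lemma and is a standard Tonelli-type identity for double series with nonnegative terms (requiring no cancellation and hence available for free). This already yields the order-independence of the outer summation. To identify the sum, I would run the following $\varepsilon$-argument: fix $\varepsilon>0$, choose $N$ so that $\sum_{i>N}\|y_i\|<\varepsilon$, and then pick $Q\leq q$ large enough that $\{1,\ldots,N\}\subset K:=\bigcup_{j=1}^Q I_j$, which is possible because $\bigcup_{j=1}^q I_j=\mathbb Z_+$. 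Granting momentarily the finite rearrangement $\sum_{j=1}^Q s_j=\sum_{i\in K}y_i$, I obtain
\[
\Big\|\sum_{j=1}^Q s_j-\sum_{i=1}^N y_i\Big\|=\Big\|\sum_{i\in K\setminus\{1,\ldots,N\}}y_i\Big\|\leq\sum_{i>N}\|y_i\|<\varepsilon,
\]
which, combined with $\|\sum_{i=1}^\infty y_i-\sum_{i=1}^N y_i\|<\varepsilon$, gives $\|\sum_{j=1}^Q s_j-\sum_{i=1}^\infty y_i\|\leq 2\varepsilon$ for every sufficiently large $Q$; letting $\varepsilon\to 0$ concludes \eqref{lem:rearrangement:eq}.

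The only subtle step, and the main technical obstacle, is justifying the identity $\sum_{j=1}^Q s_j=\sum_{i\in K}y_i$ when one or more of the $I_j$ with $j\leq Q$ are infinite. I would handle this by truncating each $I_j$ to its first $N'$ elements (with respect to the ordering inherited from $\mathbb Z_+$), noting that the truncated version $\sum_{j=1}^Q\sum_{i\in I_j,\,i\leq N'}y_i=\sum_{i\in K,\,i\leq N'}y_i$ is a finite-sum tautology, and then letting $N'\to\infty$ using the tail bounds $\|\,s_j-\sum_{i\in I_j,\,i\leq N'}y_i\|\leq\sum_{i\in I_j,\,i>N'}\|y_i\|\to 0$ for each of the finitely many $j\leq Q$, together with the analogous bound for the right-hand side. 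This is the only place where the potentially infinite character of the blocks $I_j$ really enters the argument.
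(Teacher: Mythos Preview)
Your proof is correct and follows essentially the same strategy as the paper's: both arguments first establish absolute (hence unconditional) convergence of the inner sums $s_j$ and of the outer sum $\sum_j s_j$ via the scalar Tonelli identity, and then identify the value by elementary tail estimates. The only organizational difference is that the paper first proves the two-block identity $\sum_{k\in I\cup J}y_k=\sum_{i\in I}y_i+\sum_{j\in J}y_j$, inducts to finitely many blocks, and then treats $q=\infty$ separately, whereas you obtain the $Q$-block identity $\sum_{j=1}^Q s_j=\sum_{i\in K}y_i$ in one stroke via the simultaneous truncation $i\leq N'$ and handle finite and infinite $q$ uniformly---a mild streamlining of the same idea.
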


\begin{proof}
  Note that the absolute convergence of the series $\sum_{i=1}^{\infty} y_i$, when combined with the triangle inequality, leads to
    \begin{equation}\label{}
    \sum_{j=1}^q \left\|\sum_{i\in I_j} y_i \right\|
    \leq \sum_{j=1}^{q}\left(\sum_{i\in I_j}\|y_i\|\right)
    = \sum_{i=1}^{\infty}\|y_i\| < \infty,
  \end{equation}
  where the equality holds by \cite[Theorem 6.3.1]{KrizPultr2013}. Consequently, the series $\sum_{i\in I_j} y_i$ converges absolutely, hence unconditionally, to some $z_j \in \mathcal H$, $j=1,\ldots,q$. Furthermore, the series $\sum_{j=1}^{q}z_j$ converges absolutely, hence unconditionally.

  Let now $I=\{i_1,i_2,\ldots\}$, $J=\{j_1,j_2,\ldots\}$ and $K = I \cup J = \{k_1, k_2, \ldots\}$ be countably infinite and increasingly ordered sets of $\mathbb Z_+$ such that $I\cap J = \emptyset$. We claim that
  \begin{equation}\label{pr:rearrangement:2sets}
    \sum_{k\in K} y_k = \sum_{i\in I} y_i + \sum_{j \in J} y_j.
  \end{equation}
  To see this, first define
  \begin{equation}\label{}
    [n]:=\min\{m \colon \{i_1,\ldots,i_n\} \cup \{j_1,\ldots,j_n\} \subset \{k_1,\ldots,k_m\}\}
  \end{equation}
  and
  \begin{equation}\label{}
    M_n := \left\{ \{k_1,\ldots,k_{[n]}\} \setminus
    \big(\{i_1,\ldots,i_n\} \cup \{j_1,\ldots,j_n\}\big)\right \}.
  \end{equation}
  Observe that $\min M_n \geq n$ whenever the set $M_n \neq \emptyset$. Since all the three series in \eqref{pr:rearrangement:2sets} converge, we have
  \begin{equation}
    \left\| \sum_{k\in K} y_k - \sum_{i\in I} y_i - \sum_{j \in J} y_j\right\|
    = \lim_{n\to\infty} \left\| \sum_{l=1}^{[n]} y_{k_l}
    - \sum_{l=1}^{n}y_{i_l} - \sum_{l=1}^{n}y_{j_l}\right\|
    \leq \lim_{n\to\infty} \sum_{i=n}^{\infty} \|y_i\| = 0.
  \end{equation}
  Obviously, formula \eqref{pr:rearrangement:2sets} holds when either one, or both, of $I$ and $J$ are finite.

  By induction, equality \eqref{pr:rearrangement:2sets} carries over to any finite number of sets. In particular, this proves \eqref{lem:rearrangement:eq} for all finite $q \in \mathbb Z_+$. We now show that \eqref{lem:rearrangement:eq} also holds for $q = \infty$. Indeed, redefine
  \begin{equation}\label{}
    [n]:= \min \{m \colon \{1,\ldots,n\} \subset I_1 \cup \ldots \cup I_m\}
  \end{equation}
  and
  \begin{equation}\label{}
    M_n := Z_+ \setminus K_n, \quad \text{where} \quad K_n:=I_1\cup \ldots \cup I_{[n]}.
  \end{equation}
  Note here that since $q=\infty$, we get $M_n \neq \emptyset$ and thus $\min M_n \geq n$. Consequently, by \eqref{pr:rearrangement:2sets} applied to a finite number of sets, first to $K = K_n$ and then to $K = \mathbb Z_+$, we obtain
  \begin{equation}\label{}
    \left\| \sum_{i=1}^{\infty} y_i - \sum_{j=1}^{[n]} z_j\right\|
    = \left\| \sum_{i=1}^{\infty} y_i - \sum_{k\in K_n} y_k\right\|
    = \left\| \sum_{i \in \mathbb Z_+\setminus K_{n}} y_i\right\|
    \leq \sum_{i=n}^{\infty}\|y_i\| \to 0
  \end{equation}
  as $n\to \infty$.
\end{proof}

\textbf{Acknowledgements.}  We are grateful to two anonymous referees for all their comments and remarks which helped us improve our manuscript.  This work was partially supported by the Israel Science Foundation (Grants 389/12 and 820/17), the Fund for the Promotion of Research at the Technion and by the Technion General Research Fund.

\small


\end{document}